\newtheorem{theorem}{Theorem}
\newtheorem{lemma}[theorem]{Lemma}
\newtheorem{corollary}[theorem]{Corollary}
\newtheorem{conjecture}[theorem]{Conjecture}
\theoremstyle{definition}
\newtheorem{remark}[theorem]{Remark}
\newtheorem{example}[theorem]{Example}
\newtheorem*{statementA}{Statement A}
\newtheorem*{statementB}{Statement B}
\newtheorem*{statementA'}{Statement A'}
\DeclareMathOperator{\ord}{ord}
\DeclareMathOperator{\Gal}{Gal}
\DeclareMathOperator{\tors}{tors}
\def \co {\mathcal{O}}
\def \pre {\rm{pre}}
\def \pr {{\rm Preim}(\phi,a,K)}
\def \rat {{\rm Rat}_d}
\def \prp {{\rm PrePer}(\phi,K)}
\def \Kbar {\overline{K}}
\def \Lbar {\overline{L}}
\def \kbar {\overline{k}}
\def \Qbar {\overline{\mathbb{Q}}}
\DeclareMathOperator{\Res}{Res}
\DeclareMathOperator{\dv}{div}
\DeclareMathOperator{\M}{M}
\DeclareMathOperator{\N}{N}
\DeclareMathOperator{\Pic}{Pic}
\begin{document}
\bibliographystyle{amsplain}
\title{Rational preimages in families of dynamical systems}
\author{Aaron Levin}
\address{Department of Mathematics\\Michigan State University\\East Lansing, MI 48824}
\curraddr{}
\email{adlevin@math.msu.edu}
\date{}
\begin{abstract}
Let $\phi$ be a rational function of degree at least two defined over a number field $k$.  Let $a\in \mathbb{P}^1(k)$ and let $K$ be a number field containing $k$.  We study the cardinality of the set of rational iterated preimages
\begin{equation*}
\pr=\{x_0\in \mathbb{P}^1(K)\mid \phi^N(x_0)=a \text{ for some }N\geq 1\}.
\end{equation*}
We prove two new results (Theorem \ref{m1} and Theorem \ref{m2}) bounding $|\pr|$ as $\phi$ varies in certain families of rational functions.


Our proofs are based on unit equations and a method of Runge for effectively determining integral points on certain affine curves.  We also formulate and state a uniform boundedness conjecture for $\pr$ and relate this conjecture to other well-known conjectures in arithmetic dynamics.
\end{abstract}
\maketitle

\section{Introduction}
Let $\phi\in K(x)$ be a rational function of degree $d\geq 2$, where $K$ is a number field, and let $a\in K\cup \{\infty\}=\mathbb{P}^1(K)$.  We will be interested in the {\it set of  $K$-rational iterated preimages of $a$}:
\begin{equation*}
\pr=\{x_0\in \mathbb{P}^1(K)\mid \phi^N(x_0)=a \text{ for some }N\geq 1\}.
\end{equation*}
Here $\phi^N$ denotes the $N$th iterate of $\phi$.  By standard properties of the canonical height associated to $\phi$, the set $\pr$ is known to be finite.  In fact, if $\phi$ and $a$ are fixed, the cardinality can be bounded solely in terms of $[K:\mathbb{Q}]$.  A natural question that arises is how the cardinality $|\pr|$ depends on $\phi$, $a$, and $K$.

It is easy to see that $\pr$ depends on $a$ in a nontrivial way.  For any $x_0\in \mathbb{P}^1(K)$ that is not preperiodic for $\phi$, and any $n\in \mathbb{N}$, we have trivially $|{\rm Preim}(\phi,\phi^n(x_0),K)|\geq n$.  In particular, $|\pr|$ is unbounded as we vary over $a\in \mathbb{P}^1(K)$.  Here, we will primarily be interested in fixing $a$ and looking at the dependence of $|\pr|$ on $\phi$ and $K$, but we will also study cases where we allow $a$ to vary in an algebraic way with $\phi$.

In the next section we discuss the following uniform boundedness conjecture for iterated preimages of rational functions (see Section \ref{scon} for the relevant definitions and Conjecture \ref{cmain2} for a stronger conjecture). 
\begin{conjecture}
\label{cmain}
Let $\mathcal{F}$ be a simple family of rational functions of degree $d\geq 2$ defined over a number field $k$.  Let $a \in \mathbb{P}^1(k)$ and let $K\supset k$ be a number field of degree $D$ over $\mathbb{Q}$.  Then there exists an integer $\kappa(\mathcal{F},D,a)$ such that
\begin{equation*}
|\pr|\leq\kappa(\mathcal{F},D,a)
\end{equation*}
for every $\phi\in \mathcal{F}(K)$.
\end{conjecture}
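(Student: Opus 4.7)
The conjecture is a uniform boundedness statement of the same flavor as the Morton--Silverman conjecture, so an unconditional proof appears out of reach at present. The plan is to reduce Conjecture \ref{cmain} to standard conjectures and to indicate where Theorems \ref{m1} and \ref{m2} will presumably supply unconditional input in structured subcases. First, one splits on the preperiodicity of $a$: for each $\phi\in\mathcal{F}(K)$, either $\phi^{n+p}(a)=\phi^n(a)$ for some $n\geq 0$, $p\geq 1$, or the forward orbit of $a$ is infinite. In the former case, any $x_0\in\pr$ with $\phi^N(x_0)=a$ satisfies $\phi^{N+n+p}(x_0)=\phi^{N+n}(x_0)$, hence $\pr\subseteq\prp$, and the sought bound follows from the Morton--Silverman conjecture restricted to $\mathcal{F}$. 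One may thus assume that the forward orbit of $a$ under $\phi$ is infinite.

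\textbf{Preimage curves.} For $N\geq 1$ consider the family of curves
\begin{equation*}
X_N = \{(\phi,x_0)\in \mathcal{F}\times\mathbb{P}^1 : \phi^N(x_0)=a\}
\end{equation*}
over the parameter variety of $\mathcal{F}$. Simplicity of $\mathcal{F}$ should force the geometric genera of the generic fibres $X_{N,\phi}$ to grow with $N$, via a Riemann--Hurwitz calculation on the preimage tower $\phi^{-N}(a)$. For $N$ past a threshold $N_0=N_0(\mathcal{F},a)$, a uniform version of Mordell's conjecture (Caporaso--Harris--Mazur) would then bound $|X_{N,\phi}(K)|$ in terms of $\mathcal{F}$, $N$, $D$, and $a$. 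Combined with the trivial bound $d^N$ on the number of level-$N$ preimages for $N<N_0$, one recovers a finite bound on $|\pr|$.

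\textbf{Main obstacle.} The obstacle is the uniform Mordell input: without it, the preimage curves $X_N$ cannot be controlled as $\phi$ varies. Theorems \ref{m1} and \ref{m2} presumably avoid this dependence by restricting to families in which $\phi^N(x_0)=a$ can be recast as an $S$-unit equation, or in which the affine curve cut out by $\phi^N(x_0)=a$ has sufficiently many components at infinity for Runge's method to apply. Extending these unconditional techniques to arbitrary simple families, or substituting a genuinely dynamical input (for instance, equidistribution of preimages) in place of uniform Mordell, is the substantive open problem.
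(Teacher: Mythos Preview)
The statement is a conjecture, and the paper does not prove it unconditionally; what it does prove is Theorem~\ref{tcon}, a weaker conditional form in which the bound is allowed to depend on $K$ rather than only on $D$. Your proposal should therefore be compared to the argument for Theorem~\ref{tcon}.

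Your preperiodic case is exactly the paper's: if $a$ is preperiodic for $\phi$ then $\pr\subset\prp$ and Conjecture~\ref{cMS} finishes the job. The divergence is in the wandering case. The paper does \emph{not} pass through the geometry of the preimage curves or any form of uniform Mordell. Instead it invokes Silverman's Conjecture~\ref{cS}: if $x_0\in\mathbb{P}^1(K)$ is wandering with $\phi^N(x_0)=a$, then $\hat h_\phi(x_0)\geq \kappa(K,d)\,h_{\M_d}(\langle\phi\rangle)$, while $d^N\hat h_\phi(x_0)=\hat h_\phi(a)\leq h(a)+c_1'h_{\M_d}(\langle\phi\rangle)+c_2'$ by a Call--Silverman variation-of-canonical-height estimate (with the simplicity hypothesis used to compare $h_T$ on $\mathcal{F}$ to $h_{\M_d}$). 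Combining these bounds the \emph{depth} $N$ directly, and hence $|\pr|\leq d^{N+1}$.

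Your uniform-Mordell route, as written, has a genuine gap at precisely this point. Even granting that the generic fibres $X_{N,\phi}$ eventually have genus $\geq 2$ and that Caporaso--Harris--Mazur bounds $|X_{N,\phi}(K)|$ in terms of $g(X_{N,\phi})$ and $D$, this bounds the number of $K$-points at each fixed level $N$, not the largest $N$ for which such points exist. Since for wandering $a$ the level sets $\phi^{-N}(a)(K)$ are pairwise disjoint, one has $|\pr|=\sum_N|\phi^{-N}(a)(K)|$, and a per-level bound $B(N,D)$ does not control this sum; you still need an input that forces $\phi^{-N}(a)(K)=\varnothing$ for $N$ beyond some threshold uniform in $\phi$. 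That is exactly what Conjecture~\ref{cS} supplies and what your sketch is missing. A secondary issue is that ``simplicity forces the genera to grow'' is not automatic: the curves $X_N$ may be reducible with low-genus components, and ruling this out (compare the paper's use of Baker's Theorem~\ref{tB} to control low-degree components) is itself nontrivial.
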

The most important hypothesis here is that the family $\mathcal{F}$ be simple, which means that there are only finitely many rational functions in $\mathcal{F}(\kbar)$ in any given conjugacy class of rational functions (with respect to conjugation by linear fractional transformations over $\kbar$).  We will show in Section \ref{scon} that a weaker version of Conjecture \ref{cmain} (where $\kappa$ is allowed to also depend on $K$) is a consequence of a conjecture of Silverman on canonical heights of wandering points combined with a uniform boundedness conjecture of Silverman and Morton for preperiodic points.  The straight-forward higher-dimensional analogue of Conjecture \ref{cmain} for endomorphisms of $\mathbb{P}^n$ is also plausible.



As a small step towards Conjecture \ref{cmain}, we prove two results giving certain effective bounds for $|\pr|$ in various special cases.  First, in Section \ref{secunit}, we prove a general result for rational functions of the form
\begin{equation*}
\phi(x)=\frac{x^d+a_{d-1}x^{d-1}+\cdots+a_1x}{b_{d-1}x^{d-1}+b_{d-2}x^{d-2}+\cdots+b_1x+1}.
\end{equation*}
As usual, we let $\co_K$ denote the ring of integers of a number field $K$ and we let $\co_{K,S}$ denote the ring of $S$-integers of $K$, where $S$ is a finite set of places of $K$, which we always assume contains the archimedean places of $K$.

\begin{theorem}
\label{m1}
Let $d>1$ be an integer.  Let $K$ be a number field and let $S$ be a finite set of places of $K$ containing the archimedean places.  Let $\phi$ be a rational function of degree $d$ of the form
\begin{equation}
\label{m1eq}
\phi(x)=\frac{x^d+a_{d-1}x^{d-1}+\cdots+a_1x}{b_{d-1}x^{d-1}+b_{d-2}x^{d-2}+\cdots+b_1x+1},
\end{equation}
where $a_1,\ldots,a_{d-1},b_1,\ldots,b_{d-1}\in \co_{K,S}$.  Let $a\in K^*$.
Then there exists an effectively computable integer $\kappa(d, a,|S|)$, depending only on $d$, $a$, and $|S|$ such that
\begin{equation*}
|{\rm Preim}(\phi,a,K)|\leq \kappa(d,a,|S|).
\end{equation*}

If $\phi\neq x^d$ and $a\in \co_{K,S}^*$, then there exists an effectively computable bound also independent of $a$:
\begin{equation*}
|{\rm Preim}(\phi,a,K)|\leq \kappa'(d,|S|).
\end{equation*}
For instance, we may take $\kappa'(d,|S|)=\exp(\exp(d^{20d}|S|))$.
\end{theorem}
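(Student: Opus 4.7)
The plan is to proceed in three stages, combining local $v$-adic analysis with $S$-unit equations.

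\textbf{Stage 1 ($S$-integrality).}  I would first show that when $a\in\co_{K,S}^*$, every element of $\pr$ lies in $\co_{K,S}^*$.  Fix any $v\notin S$; since $a_j,b_j\in\co_{K,S}$, the numerator $xP(x)$ of $\phi$ is monic of degree $d$ with vanishing constant term, and the denominator $Q(x)$ has constant term $1$.  A short Newton-polygon-style case analysis then shows $|\phi(x)|_v=1\Rightarrow|x|_v=1$: if $|x|_v>1$ one has $|xP(x)|_v=|x|_v^d$ and $|Q(x)|_v\le|x|_v^{d-1}$, forcing $|\phi(x)|_v\ge|x|_v>1$; if $|x|_v<1$ then $|Q(x)|_v=1$ while $|xP(x)|_v<1$, forcing $|\phi(x)|_v<1$.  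Backward induction along $x_0\to\phi(x_0)\to\cdots\to\phi^N(x_0)=a$ then puts every $x_0\in\pr$ into $\co_{K,S}^*$.  For general $a\in K^*$ I would enlarge $S$ to $S'=S\cup\{v\notin S\,:\,v(a)\ne0\}$ so that $a\in\co_{K,S'}^*$; since $|S'|$ depends on $a$, this step accounts for the $a$-dependence of $\kappa(d,a,|S|)$ in the first assertion.

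\textbf{Stage 2 ($S$-unit equation).}  Given $x,y\in\co_{K,S}^*$ with $\phi(x)=y$, the identity $xP(x)=yQ(x)$ rearranges to
\[
x^d-y=\sum_{j=1}^{d-1}(yb_j-a_j)x^j,
\]
whose left-hand side is a difference of two $S$-units.  Viewing the monomials $x^d,y,x^j,yb_jx^j,a_jx^j$ as independent unknowns and normalising (e.g.\ dividing through by $y$), one extracts a genuine $S$-unit equation in at most $\sim 2d$ unknowns, whose number of terms is bounded by a function of $d$ alone.  The hypothesis $\phi\ne x^d$ is used precisely here to rule out the degenerate case in which every $a_j,b_j$ vanishes and the equation becomes vacuous (consistent with the fact that $d^N$-th roots of $a$ can then multiply freely).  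A quantitative $S$-unit equation theorem of Evertse--Schlickewei type bounds the number of such pairs by $\exp(C(d)|S|)$.

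\textbf{Stage 3 (Assembly).}  Iterating the Stage 2 estimate over the tree of backward orbits and combining it with a bound on the depth of that tree---obtained either by re-applying the unit-equation estimate to multiple levels at once, or by an effective Northcott argument on $S$-units of small canonical height---yields the stated double-exponential $\kappa'(d,|S|)=\exp(\exp(d^{20d}|S|))$.  The main obstacle is Stage 2: producing a \emph{bona fide} $S$-unit equation from the displayed polynomial identity when only $x^d$ and $y$ are known to be $S$-units while the middle terms $(yb_j-a_j)x^j$ are merely $S$-integers, and controlling the vanishing-subsum degenerations.  These degenerations correspond to exceptional families of $\phi$ (the extreme example being $\phi=x^d$) on which the Evertse-type count collapses, and it is precisely their exclusion that forces the hypothesis $\phi\ne x^d$ in the bound independent of $a$.
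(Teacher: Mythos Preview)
Your Stage~1 is fine and matches the paper. The real problem is Stage~2: the displayed identity
\[
x^d-y=\sum_{j=1}^{d-1}(yb_j-a_j)x^j
\]
is \emph{not} an $S$-unit equation, because the coefficients $a_j,b_j$ are only $S$-integers, not $S$-units, so the terms $(yb_j-a_j)x^j$ need not be $S$-units. You correctly flag this as ``the main obstacle,'' but you do not resolve it, and Evertse's theorem simply does not apply to sums of $S$-integers. Nor can one hope to bound the number of solutions in this form uniformly in the $a_j,b_j$: for fixed $x^d/y$ and fixed $x$, the equation imposes a single linear condition on the $(2d-2)$-tuple $(a_1,\ldots,a_{d-1},b_1,\ldots,b_{d-1})$, so without further input the count of admissible pairs $(x,y)$ depends on the coefficients.

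The paper's key idea, which your proposal is missing, is to \emph{eliminate} the $a_j,b_j$ altogether. One takes $2d-1$ consecutive relations $\phi(x_i)=x_{i-1}$ along a backward orbit, views them as a linear system $A\mathbf v=\mathbf 0$ in the unknown vector $\mathbf v=(1,a_{d-1},\ldots,a_1,-b_{d-1},\ldots,-b_1)^T$, and concludes $\det A=0$. This determinant is a polynomial $P(x_0,\ldots,x_{2d-1})$ with \emph{integer} coefficients and monomials in the $x_i$ only; since each $x_i\in\co_{K,S}^*$, every monomial is an $S$-unit, and now Evertse genuinely applies. The hypothesis $\phi\neq x^d$ is used not at the level of your single equation but to show that distinct shifts $i$ of the orbit give distinct nondegenerate solutions (Lemma~\ref{lemman}), which is what converts Evertse's bound into a bound on the orbit length $N$. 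Your Stage~3 is also too vague: the depth bound is not a separate Northcott argument but comes directly from the fact that each of the $N-2d+1$ shifts contributes a solution to the same unit equation.
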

\noindent
The proof of Theorem \ref{m1} is based on a reduction to certain unit equations.

\begin{remark}
Theorem \ref{m1} can be used to give a bound for $|\pr|$ for {\it any} rational function $\phi$ and $a\in \mathbb{P}^1(K)$.  Indeed, in this case it's not hard to see that there exists a linear fractional transformation $L\in \Kbar(x)$ such that $L(a)\neq 0,\infty$ and $L\circ \phi\circ L^{-1}$ is a rational function of the form \eqref{m1eq}.  Let $K'\supset K$ be a number field that $L$ is defined over.  Then, since there is a bijection between the sets ${\rm Preim}(\phi,a,K')$ and ${\rm Preim}(L\circ \phi\circ L^{-1},L(a),K')$, we may use Theorem \ref{m1} to bound $|\pr|$.
\end{remark}

In the final section of the paper we study and prove results for certain one-parameter families of quadratic rational functions.

\begin{theorem}
\label{m2}
Let $K$ be a number field of degree $D$ over $\mathbb{Q}$.   For $t\in K$ let $s(t)$ be the number of primes $\mathfrak{p}$ of $K$ for which $t$ is non-integral (i.e., $|t|_\mathfrak{p}>1$).  Let $a,b,c\in K[t]$ be polynomials.
\begin{enumerate}[(a)]
\item  
\label{pa}
Suppose that $b^2-4c-2b$ is nonconstant.  Let 
\begin{equation*}
\phi_t(x)=x^2+b(t)x+c(t).  
\end{equation*}
For $t\in K$ there exists an effectively computable integer $\kappa(D,a,b,c,s(t))$ such that
\begin{equation*}
|{\rm Preim}(\phi_t,a(t),K)|\leq \kappa(D,a,b,c,s(t)).
\end{equation*}

\item 
\label{pb}
Suppose that $bc$ is nonconstant and that if $c$ is constant then $a$ is also constant.  Let 
\begin{equation*}
\phi_t(x)=\frac{x^2+b(t)x}{c(t)x+1}.  
\end{equation*}
For $t\in K$ satisfying $b(t)c(t)\neq 1$ there exists an effectively computable integer $\kappa(D,a,b,c,s(t))$ such that
\begin{equation*}
|{\rm Preim}(\phi_t,a(t),K)|\leq \kappa(D,a,b,c,s(t)).
\end{equation*}
\end{enumerate}
\end{theorem}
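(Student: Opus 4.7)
Both parts of Theorem~\ref{m2} will be deduced from Theorem~\ref{m1} applied to the specialization $\phi_t$, after putting $\phi_t$ into the form \eqref{m1eq} and choosing an $S$-integer ring whose size is controlled by $s(t)$ and by data depending only on $K$ and on $a,b,c$. Part (\ref{pb}) is essentially immediate: $\phi_t(x)=(x^2+b(t)x)/(c(t)x+1)$ is already of the form \eqref{m1eq} with $d=2$, $a_1=b(t)$, $b_1=c(t)$, and the assumption $b(t)c(t)\neq 1$ forces $\phi_t$ to have degree $2$. Take $S\subseteq M_K$ to be the union of the archimedean places, the $s(t)$ places at which $t$ is non-integral, and a fixed finite set of bad primes depending only on $K$ and the coefficients of $a,b,c$, so that $b(t),c(t)\in\co_{K,S}$ and $|S|\leq s(t)+C(K,a,b,c)$. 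For $t$ with $a(t)\neq 0$, Theorem~\ref{m1} then yields a bound; the (at most $\deg a$) values of $t$ with $a(t)=0$ are handled separately by direct analysis of the preimages of the fixed point $0$.

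Part (\ref{pa}) requires a preliminary conjugation to bring $\phi_t(x)=x^2+b(t)x+c(t)$ into the form \eqref{m1eq}. The fixed points are
\[\alpha_\pm(t)=\tfrac{1}{2}\bigl((1-b(t))\pm\sqrt{\Delta(t)}\bigr),\qquad \Delta(t)=(b(t)-1)^2-4c(t)=1+(b^2-4c-2b)(t),\]
and by hypothesis $\Delta$ is a nonconstant polynomial. Setting $L_t=K(\sqrt{\Delta(t)})$ (at most a quadratic extension of $K$) and conjugating by $L(x)=x-\alpha_+(t)$ yields
\[\tilde\phi_t(x)=x^2+(1+\sqrt{\Delta(t)})\,x,\]
which is of the form \eqref{m1eq} over $L_t$. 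The injection $x_0\mapsto x_0-\alpha_+(t)$ gives $|\pr|\leq|{\rm Preim}(\tilde\phi_t,a(t)-\alpha_+(t),L_t)|$, and since each place of $K$ has at most two extensions to $L_t$, the analogue $S_{L_t}\subseteq M_{L_t}$ of the set $S$ from part (\ref{pb}) satisfies $|S_{L_t}|\leq 2|S|$, with $\sqrt{\Delta(t)}$ integral at places outside $S_{L_t}$ because $\Delta(t)$ is $S$-integral in $K$. The problem then reduces to an application of Theorem~\ref{m1} over $L_t$, after disposing of the finitely many degenerate specializations---$a(t)=\alpha_\pm(t)$ (zero translated target) and $\sqrt{\Delta(t)}=-1$ (so that $\tilde\phi_t=x^2$ and only the first bound of Theorem~\ref{m1} applies)---individually.

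The main obstacle is to verify that the bound coming out of Theorem~\ref{m1} depends only on $D,a,b,c,s(t)$ and not on finer arithmetic of the specific target $a(t)-\alpha_+(t)$ (respectively $a(t)$). The plan is to enlarge $S$ once more to include all places at which the target fails to be a unit and then invoke the target-independent bound $\kappa'(d,|S|)$ of Theorem~\ref{m1}. The nondegeneracy hypotheses---nonconstancy of $b^2-4c-2b$ in part (\ref{pa}), and of $bc$ together with the coupling condition on $a,c$ in part (\ref{pb})---are exactly what should allow this enlargement to be controlled by $O(s(t))+O_{a,b,c}(1)$ places, so that $|S|$ remains within the claimed form and the resulting bound is of the shape $\kappa(D,a,b,c,s(t))$.
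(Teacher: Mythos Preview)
Your approach is fundamentally different from the paper's---the paper proves Theorem~\ref{m2} by applying Runge's method to the affine ``preimage curves'' $Y^{\pre}(N)\subset\mathbb{A}^2$ defined by $\phi_t^N(x)=a(t)$, after analyzing their structure at infinity (Theorems~\ref{tr4}, \ref{tr1}, \ref{tr2}, \ref{tr3}) and invoking Baker's function-field height bound---whereas you try to reduce directly to Theorem~\ref{m1}. Your reduction, however, has a genuine gap at the step you yourself flag as ``the main obstacle.''

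To invoke the target-independent bound $\kappa'(d,|S|)$ of Theorem~\ref{m1} you must enlarge $S$ so that the specialized target is an $S$-unit. But the number of places at which $a(t)$ (in part~\eqref{pb}) or $a(t)-\alpha_+(t)$ (in part~\eqref{pa}) fails to be a unit is governed by the \emph{numerator} factorization of that element, which is not controlled by $s(t)$ at all. For a concrete counterexample in part~\eqref{pb}, take $K=\mathbb{Q}$, $a(t)=t$, $b(t)=1$, $c(t)=t$ (so $bc$ is nonconstant and $c$ is nonconstant). For $t\in\mathbb{Z}\setminus\{0,1\}$ one has $s(t)=0$ and the coefficients $b(t),c(t)$ are already integers, but making the target $a(t)=t$ an $S$-unit forces $S$ to contain every prime dividing $t$; this is unbounded as $t$ varies. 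The same phenomenon occurs in part~\eqref{pa}: with $a=b=0$, $c=t$ (so $b^2-4c-2b=-4t$ is nonconstant) one has $\alpha_+(t)\alpha_-(t)=t$, so the number of places where $\alpha_+(t)$ is a non-unit again tracks the factorization of $t$. The nondegeneracy hypotheses ensure the family is not isotrivial---which is what makes the paper's Runge argument on $Y^{\pre}(N)$ go through---but they say nothing about the arithmetic of the specialized target. Thus your method yields at best a bound $\kappa(D,a(t),b,c,s(t))$ with uncontrolled dependence on the particular value $a(t)$, not the uniform $\kappa(D,a,b,c,s(t))$ claimed.
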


Our proof of Theorem \ref{m2} is based on an old method of Runge \cite{Run} for effectively determining integral points on certain affine curves. 

\begin{remark}
In particular, Theorem \ref{m2} states that for these one-parameter families we have proved uniform bounds if one restricts to {\it integral} parameters, i.e., $t\in \co_{K,S}$ for some fixed finite set of places $S$ of $K$.
\end{remark}

\begin{remark}
The condition that $b^2-4c-2b$ be nonconstant in the first part of the theorem is necessary.  One may check that $b^2-4c-2b$ is constant if and only if there exists a linear fractional transformation $L\in \overline{K(t)}(x)$ such that we have $L\circ (x^2+b(t)x+c(t))\circ L^{-1}\in \overline{K}(x)$.  In this case, one easily sees that it may happen that $|{\rm Preim}(\phi_t,a(t),K)|$ is unbounded for, say, $t\in \co_K$.
\end{remark}

\begin{remark}
\label{rem7}
The case $a=\infty$ can easily be added to Theorem \ref{m2}.  Only part \eqref{pb} is nontrivial in this case and follows from the fact that (adding $b$ and $c$ into the notation)
\begin{equation*}
\phi_{b,c,t}^N(x)=\infty \Longleftrightarrow \phi_{c,b,t}^N\left(\frac{1}{x}\right)=0.
\end{equation*}
So $|{\rm Preim}(\phi_{b,c,t},\infty,K)|=|{\rm Preim}(\phi_{c,b,t},0,K)|$, and the latter cardinality is covered in Theorem \ref{m2}.
\end{remark}

We now discuss earlier results.  Conjecture \ref{cmain} is largely known when $\mathcal{F}$ is the family of quadratic polynomials $x^2+t$.

\begin{theorem}[Faber, Hutz, Ingram, Jones, Manes, Tucker, Zieve \cite{Fab}]
\label{tFab}
For all but finitely many values $a\in \Qbar$, if $K$ is a number field of degree $D$, 
\begin{equation*}
\mathcal{F}(K)=\{x^2+t\mid t\in K\}, 
\end{equation*}
and $a\in K$, then there exists an integer $\kappa(D,a)$ such that
\begin{equation*}
|\pr|\leq\kappa(D,a)
\end{equation*}
for every rational function $\phi\in \mathcal{F}(K)$.
\end{theorem}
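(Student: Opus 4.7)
The plan is to reduce the task of bounding $|\pr|$ for $\phi_t(x)=x^2+t$, uniformly over $t\in K$, to a Diophantine statement about a natural sequence of algebraic curves, and then to invoke Faltings' theorem. Stratify the preimage set by depth,
\[
\pr = \bigsqcup_{N\ge 1} S_N(t), \qquad S_N(t) := \{x \in K : \phi_t^N(x)=a,\ \phi_t^{N-1}(x)\ne a\},
\]
and for each $N\ge 1$ introduce the \emph{depth-$N$ preimage curve}
\[
Y_{N,a} := \{(t,x)\in \mathbb{A}^2_{\Qbar} : \phi_t^N(x)=a,\ \phi_t^{N-1}(x)\ne a\},
\]
so that $S_N(t)$ is the $K$-rational fiber of the $t$-projection $\pi_t\colon Y_{N,a}\to \mathbb{A}^1_t$.

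The first substantive step is to analyze the geometry of $Y_{N,a}$. The natural tower
\[
Y_{N,a}\longrightarrow Y_{N-1,a}, \qquad (t,x)\longmapsto (t,\phi_t(x)),
\]
is a generically degree-$2$ cover ramified over $x=0$, the unique critical point of $\phi_t$ for every $t$. Iterating Riemann--Hurwitz produces a formula for $g(Y_{N,a})$ whose leading term grows exponentially in $N$, except when the critical orbit of $\phi_t$ interacts with $a$ in a degenerate way (for instance when a low-level preimage of the critical value coincides with $a$ as an identity in $t$). These degeneracies are cut out by finitely many algebraic conditions on $a$, which produce the finite exceptional set of the theorem.

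With $g(Y_{N,a})\to\infty$ for $a$ outside this exceptional set, Faltings' theorem yields $|Y_{N,a}(K)|<\infty$ for every $N$ with $g(Y_{N,a})\ge 2$. To convert ``finite at each depth'' into a bound uniform in $t$, I would pass to the fibered product
\[
\widetilde Y_{N,a} := Y_{1,a}\times_{\mathbb{A}^1_t}\cdots \times_{\mathbb{A}^1_t} Y_{N,a},
\]
whose $K$-points parametrize entire chains of strict preimages over a single parameter $t$. A careful Riemann--Hurwitz computation shows $g(\widetilde Y_{N,a})$ grows very rapidly, so that for $N$ large in terms of $D$ and $a$ any such chain forces $t$ to lie in a fixed finite set $T_0\subset K$. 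Outside $T_0$ the preimage tree terminates at depth at most some $N_0(D,a)$, giving $|\pr|\le 2^{N_0+1}$; inside $T_0$ the individual Faltings statements supply an ad hoc bound for each exceptional $t$, and one takes the maximum to define $\kappa(D,a)$.

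The main obstacle is precisely this last uniformity step: Faltings' theorem is non-effective and non-uniform, so ``finite at each level $N$'' does not by itself prevent $K$-rational points from accumulating over infinitely many parameters $t$ as $N$ varies. Making the fibered-product argument rigorous requires either enough genus growth in $\widetilde Y_{N,a}$ to bypass any uniform Mordell input (in the spirit of Caporaso--Harris--Mazur), or direct Mordell--Weil and Chabauty analyses of the dynatomic-type curves $Y_{N,a}$ at small levels, tailored to the family $x^2+t$ and its symmetry $x\mapsto -x$. I expect the technical core of the proof to lie in these iterated Riemann--Hurwitz computations and in ruling out unexpected components of $Y_{N,a}$ forced by arithmetic identities special to this family.
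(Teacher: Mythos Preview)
This theorem is not proved in the present paper at all: it is quoted as a result of Faber, Hutz, Ingram, Jones, Manes, Tucker, and Zieve, and the only comment the paper makes is that the proof in \cite{Fab} is ineffective. So there is no ``paper's own proof'' to compare against here; you are reconstructing the argument of \cite{Fab}, and your outline is in fact close to theirs: one introduces the preimage curves $Y_{N,a}$, shows (for all but finitely many $a$) that they are geometrically irreducible with genus tending to infinity via iterated Riemann--Hurwitz, and then applies Faltings.

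Where your write-up goes wrong is in the uniformity step, which you flag as the main obstacle but then misdiagnose. You do not need fibered products, Caporaso--Harris--Mazur, or any uniform Mordell input to get a bound uniform in $t$. Once you know a single $N_0$ for which $Y_{N_0,a}$ is irreducible of genus $\ge 2$, Faltings gives that $Y_{N_0,a}(K)$ is finite; its image under the projection $\pi_t$ is then a \emph{finite} set $T_0\subset K$. For every $t\notin T_0$ there is no $K$-rational point on $Y_{N_0,a}$ with that $t$-coordinate, hence no $K$-rational preimage of $a$ at depth $N_0$, hence none at any depth $\ge N_0$ (since a depth-$N$ preimage maps via $\phi_t^{N-N_0}$ to a depth-$N_0$ preimage). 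This already gives $|\pr|\le 2^{N_0}-1$ for all $t\notin T_0$, and the finitely many $t\in T_0$ are handled individually. So ``finite at one level'' suffices; there is no accumulation issue as $N$ varies. The genuine technical content of \cite{Fab} lies where you only gesture: proving irreducibility of $Y_{N,a}$ and carrying out the genus computation carefully, and identifying precisely the finite exceptional set of $a$ for which this fails.
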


The proof given in \cite{Fab} is ineffective and does not give an explicit upper bound $\kappa(D,a)$.  For the same family $\mathcal{F}$, in the special case $a=0$ and $K=\mathbb{Q}$, Faber, Hutz and Stoll \cite{Fab2} have shown, conditional on the Birch and Swinnerton-Dyer conjecture and related conjectures, that 
\begin{equation*}
\max_{\phi\in \mathcal{F}(\mathbb{Q})}|{\rm Preim}(\phi,0,\mathbb{Q})|=6.
\end{equation*}
Moreover, they also prove (unconditionally) that $|{\rm Preim}(\phi,0,\mathbb{Q})|\leq 6$ for all but finitely many $\phi\in \mathcal{F}(\mathbb{Q})$.

Along the same lines, for any number field $K$ and $a\in K$, Hutz, Hyde, and Krause \cite{HHK} have computed explicit sharp upper bounds $\bar{\kappa}(a,K)\in \{4,6,8,10\}$ such that $|\pr|\leq \bar{\kappa}(a,K)$ for all but finitely many $\phi\in \mathcal{F}(K)$.

As noted in \cite[Remark 4.9]{Fab}, results on canonical heights in \cite{In} imply an effective, but weaker, version of Theorem \ref{tFab} where, as in Theorem \ref{m2}, we allow $\kappa$ to also depend on the number of primes of $K$ at which $t$ is not integral.
\begin{theorem}[Ingram]
\label{tin}
Let $K$ be a number field of degree $D$ over $\mathbb{Q}$.  Let $a,t\in K$.  Let $s(t)$ be the number of primes $\mathfrak{p}$ of $K$ for which $|t|_\mathfrak{p}>1$.  Let $\phi_t=x^2+t$.  Then there exists an effectively computable integer $\kappa(D,a,s(t))$ such that
\begin{equation*}
|{\rm Preim}(\phi_t,a,K)|\leq\kappa(D,a,s(t)).
\end{equation*}
\end{theorem}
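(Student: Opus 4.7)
The plan is to combine the functional equation $\hat{h}_{\phi_t}(\phi_t(x))=2\hat{h}_{\phi_t}(x)$ for the canonical height with the effective canonical-height lower bound that Ingram establishes in \cite{In}. Iterating the functional equation, any preimage $\phi_t^N(x_0)=a$ satisfies $\hat{h}_{\phi_t}(x_0)=\hat{h}_{\phi_t}(a)/2^N$, and this identity does the heavy lifting.

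I would split into two cases depending on whether $a$ is preperiodic for $\phi_t$. In the wandering case $\hat{h}_{\phi_t}(a)>0$; Ingram supplies an effective positive lower bound for $\hat{h}_{\phi_t}$ restricted to non-preperiodic points of $\mathbb{P}^1(K)$, whose form is strong enough that, when compared against the standard upper estimate $\hat{h}_{\phi_t}(a)\le h(a)+O(h(t))$ coming from the height machine for $\phi_t$, the ratio $\hat{h}_{\phi_t}(a)/\hat{h}_{\phi_t}(x_0)$ is bounded effectively in terms of $D,a,s(t)$ alone. The identity above then forces $N\le N_0(D,a,s(t))$, and combining with the trivial inequality $|\{x_0\in\mathbb{P}^1(K):\phi_t^N(x_0)=a\}|\le\deg\phi_t^N=2^N$ yields
\begin{equation*}
|{\rm Preim}(\phi_t,a,K)|\le\sum_{N=1}^{N_0}2^N\le 2^{N_0+1}.
\end{equation*}
In the preperiodic case $\hat{h}_{\phi_t}(a)=0$, so every element of ${\rm Preim}(\phi_t,a,K)$ is itself preperiodic, and I would invoke an effective bound on $|{\rm PrePer}(\phi_t,K)|$ in terms of $D,s(t)$, either as a byproduct of the same local height analysis or via Morton--Silverman-style results for quadratic polynomials.

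The main obstacle hidden behind the citation \cite{In} is of course producing Ingram's canonical-height lower bound with the correct $t$-dependence (through $s(t)$ rather than through $h(t)$). This is done by decomposing $\hat{h}_{\phi_t}=\sum_v\hat{\lambda}_{\phi_t,v}$ into local canonical heights: at the places $v$ with $|t|_v\le 1$ the local height essentially equals $\max(\log|x|_v,0)$ and contributes nonnegatively along the orbit of any wandering point, while at the $s(t)$ bad primes one must carefully bound the potentially negative contributions. Making this quantitative and effective---in particular, ensuring the positive contributions cannot be swamped by the negative ones unless $x$ is preperiodic---is where the real work lies; once that height inequality is in hand, the iteration/counting argument above is routine.
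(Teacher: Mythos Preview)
Your outline is correct and matches what the paper indicates. Note, however, that the paper does not supply its own proof of this theorem: it is attributed to Ingram and the text merely records that ``results on canonical heights in \cite{In} imply'' the statement (see also \cite[Remark 4.9]{Fab}). Your sketch---split into the preperiodic and wandering cases, and in the wandering case combine the functional equation $\hat{h}_{\phi_t}(a)=2^N\hat{h}_{\phi_t}(x_0)$ with Ingram's effective lower bound $\hat{h}_{\phi_t}(x_0)\ge c(D,s(t))>0$ and the upper estimate $\hat{h}_{\phi_t}(a)\le h(a)+O(h(t))$---is exactly the intended deduction, and it is structurally identical to the paper's own (conditional) proof of Theorem~\ref{tcon}, with Ingram's unconditional height inequality for the family $x^2+t$ playing the role of Conjecture~\ref{cS}. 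For the preperiodic case you are right that one needs an effective bound on $|{\rm PrePer}(\phi_t,K)|$ depending only on $D$ and $s(t)$; such bounds for polynomial maps in terms of the degree and the number of places of bad reduction are available (e.g.\ via Benedetto's work), so there is no gap there.
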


Note that part \eqref{pa} of Theorem \ref{m2} generalizes Theorem \ref{tin} to other one-parameter families of quadratic polynomials.  It seems likely that the method used in Theorem~\ref{m2} can successfully yield results for many other one-parameter families.


\section{The Uniform Boundedness Conjecture for Iterated Preimages}
\label{scon}

We first make a few definitions and recall some basic facts (see \cite{Sil2} or \cite[Ch. 4]{Sil} for a much more detailed treatment).  Let $d$ be a positive integer.  The set of rational functions of degree $d$ (over, say, $\Qbar$) can naturally be given the structure of an algebraic variety.  There is an affine variety defined over $\mathbb{Q}$, denoted $\rat$, such that for any number field $K$, the set of points $\rat(K)$ naturally corresponds to the set of rational functions over $K$ of degree $d$ .  Specifically, to a rational function $\phi$ of degree $d$,
\begin{equation*}
\phi=\frac{F_{\mathbf{a}}}{F_{\mathbf{b}}}=\frac{a_dx^d+a_{d-1}x^{d-1}+\cdots+a_1x+a_0}{b_dx^d+b_{d-1}x^{d-1}+\cdots+b_1x+b_0},
\end{equation*}
we can associate the point of projective space $[a_0,\ldots,a_d,b_0,\ldots,b_d]\in \mathbb{P}^{2d+1}$.  That $\phi$ has degree $d$ (and not smaller degree) is equivalent to the nonvanishing of the resultant $\rho=\Res(F_{\mathbf{a}},F_{\mathbf{b}})$ (which in turn is equivalent to the fact that $F_{\mathbf{a}}$ and $F_{\mathbf{b}}$ have no common zero (over $\Kbar$) and not both $a_d$ and $b_d$ are zero).  As $\rho$ is a homogeneous polynomial over $\mathbb{Z}$ in $a_0,\ldots,a_d,b_0,\ldots,b_d$, it's clear that, under the above correspondence, the affine variety $\rat=\mathbb{P}^{2d+1}\setminus \{\rho=0\}$ has the desired property.

The set of linear fractional transformations over $\Qbar$ acts on the set of degree $d$ rational functions $\rat(\Qbar)$ via conjugation.  The resulting conjugacy classes can again be naturally put into correspondence with the set of algebraic points of an affine variety, denoted $\M_d$.  Moreover, there is a morphism $\langle \cdot \rangle:\rat\to \M_d$ such that the fiber above a point $P\in \M_d(\Qbar)$ consists exactly of the set of rational functions in the conjugacy class associated to $P$.

By a {\it family $\mathcal{F}$ of rational functions of degree $d$ defined over a number field $K$} we will simply mean a subvariety $\mathcal{F}\subset \rat$ defined over $K$.  When no confusion can arise, we will identify the points in $\mathcal{F}(\kbar)$ with the associated rational functions.  We call the family {\it simple} if there are only finitely many elements in $\mathcal{F}(\Kbar)$ in any given conjugacy class of rational functions (equivalently, the induced map $\langle\cdot\rangle|_{\mathcal{F}}:\mathcal{F}\to \M_d$ is quasi-finite, i.e., has finite fibers).  
We give an example to show the necessity of the assumption that $\mathcal{F}$ be simple in Conjecture \ref{cmain}.

\begin{example}
\label{ex1}
Consider the family of quadratic polynomials 
\begin{equation*}
\phi_t(x)=(x+t)^2-t=x^2+2tx+t^2-t.  
\end{equation*}
We have $\phi_t^N(x)=(x+t)^{2^N}-t$.  So $\phi_{2^{2^N}}^N\left(2-2^{2^N}\right)=0$ and $|{\rm Preim}(\phi_t,0,\mathbb{Q})|$ is unbounded for $t\in \mathbb{Q}$ (in fact, even for $t\in \mathbb{Z}$).  Note that if $L_t(x)=x+t$ and $L_t^{-1}(x)=x-t$, then $L_t\circ \phi_t\circ L_t^{-1}(x)=x^2$.  So every rational function $\phi_t$ lies in the same conjugacy class.
\end{example}

To further explain this example, note that if $L$ is a linear fractional transformation over $K$, we have a bijection between the set $\pr$ and the set ${\rm Preim}(L\circ \phi\circ L^{-1},L(a),K)$.  So in studying $\pr$, it makes sense to define an equivalence relation $(\phi,a,K)\sim (\phi',a',K)$ if $\phi'=L\circ \phi\circ L^{-1}$ and $a'=L(a)$ for some linear fractional transformation $L$ defined over $K$.  From this point of view, if $\phi_t$ is as in Example \ref{ex1}, we see that for any fixed $a\in K$, the set $\{(\phi_t,a,\mathbb{Q})\mid t\in \mathbb{Q}\}$ is equivalent to the set $\{(x^2,a',\mathbb{Q})\mid a'\in \mathbb{Q}\}$.  So in looking at ${\rm Preim}(\phi_t,0,\mathbb{Q})$ for the family of functions $\phi_t$, in some sense, we haven't really fixed the element $a$ at all.  Example \ref{ex1}  is just another realization of the fact that $|\pr|$ is unbounded for $a\in K$.

We note that, in particular, Conjecture \ref{cmain} does not hold if we take $\mathcal{F}=\rat$ to be the family of all rational functions of a given degree $d\geq 2$.

Recall that the set of ($K$-rational) preperiodic points is the set
\begin{equation*}
\prp=\{x_0\in \mathbb{P}^1(K)\mid \phi^i(x_0)=\phi^j(x_0) \text{ for some } i>j\geq 0\}.
\end{equation*}
For any rational function $\phi$ of degree $d\geq 2$ and any number field $K$, the set $\prp$ is known to be finite.
In the same vein as Conjecture \ref{cmain}, Morton and Silverman \cite{MS} have made the following uniform boundedness conjecture for the cardinality of $\prp$:
\begin{conjecture}[Morton, Silverman]
\label{cMS}
Let $K$ be a number field of degree $D$ over $\mathbb{Q}$ and let $\phi\in K(x)$ be a rational function of degree $d\geq 2$.  Then there exists an integer $\kappa(D,d)$ such that
\begin{equation*}
|\prp|\leq \kappa(D,d).
\end{equation*}
\end{conjecture}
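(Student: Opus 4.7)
The final statement is the Morton--Silverman uniform boundedness conjecture, which is a major open problem in arithmetic dynamics, so I cannot honestly propose a complete proof strategy. What I can outline are the two main families of approaches that have been tried, together with where each one stalls.

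The first approach is by analogy with Mazur's theorem on torsion of elliptic curves and its extension by Merel and Kamienny, where preperiodic points play the role of torsion. One fixes a \emph{preperiodic portrait} $P$ (the combinatorial type of the forward orbit of a preperiodic tuple) and constructs a \emph{dynamical modular variety} $\mathrm{Dyn}_d(P)$ parametrizing pairs $(\phi,\mathbf{x})$ with $\phi\in\rat$ and $\mathbf{x}$ a tuple of preperiodic points realizing $P$. The plan would be: (i) for all but finitely many $P$ (of bounded complexity relative to $d$ and $D$), show $\mathrm{Dyn}_d(P)$ is of general type, or has only degenerate components, so by the Bombieri--Lang conjecture the $K$-points with $[K:\mathbb Q]\le D$ lie on a proper subvariety; (ii) stratify these exceptional loci and push the induction on portrait complexity; (iii) handle the finitely many remaining small portraits by hand. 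The hard part is step (i): even proving general type for these varieties, let alone the Lang-type consequences, is far out of reach in general, which is why unconditional results exist only for very special $d$ and $P$ (Poonen and others for small cycle lengths in the family $x^2+c$).

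A second, softer approach is the one hinted at in Section~\ref{scon} of the excerpt: combine Silverman's conjectural lower bound
\begin{equation*}
\hat h_\phi(P)\gg \frac{h_{\M_d}(\langle\phi\rangle)}{\kappa(D,d)}\qquad\text{for wandering }P\in\mathbb P^1(K),
\end{equation*}
with the standard fact that the set of $K$-points of canonical height below any fixed threshold has size bounded only in terms of $d$ and $D$. Since preperiodic points satisfy $\hat h_\phi=0$, such a lower bound would automatically imply a uniform bound on $|\prp|$, reducing Conjecture~\ref{cMS} to Silverman's conjecture. This is the route that Conjecture~\ref{cmain} piggybacks on in the excerpt, and in the same spirit one could try to prove it in the direction taken by Theorem~\ref{m2}: use Runge's method or unit equations to produce an effective height separation when the coefficients of $\phi$ are $S$-integral, and then try to remove the integrality hypothesis by a reduction argument. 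This succeeds in the special families of Theorems~\ref{m1} and~\ref{m2} precisely because the defining equations can be rearranged into $S$-unit equations in a bounded number of variables.

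The main obstacle, and the reason I would not expect to complete either plan, is the absence of a geometric uniformizing object (analogous to modular curves or Shimura varieties) whose rational points classify the dynamical data. Without such an object one cannot invoke Faltings, Merel, or any comparable Diophantine finiteness theorem directly; and the soft height-theoretic route trades Conjecture~\ref{cMS} for Silverman's equally open conjecture on canonical heights of wandering points. Consequently the most one can realistically propose in the absence of a new idea is to extend the effective, parameter-restricted results of Theorem~\ref{m1} and Theorem~\ref{m2} to progressively larger simple families $\mathcal F\subset\rat$, thereby attacking Conjecture~\ref{cmain} (and hence the corresponding case of Conjecture~\ref{cMS} via preimages of preperiodic points) one family at a time.
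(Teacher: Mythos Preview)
You are right that Conjecture~\ref{cMS} is an open problem and that the paper does not prove it; it is simply stated as a conjecture of Morton and Silverman and used as a hypothesis in Theorem~\ref{tcon}. So declining to give a proof is the correct response.

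However, there is a substantive error in your second approach. You assert that Silverman's lower bound (Conjecture~\ref{cS}) together with ``the standard fact that the set of $K$-points of canonical height below any fixed threshold has size bounded only in terms of $d$ and $D$'' would reduce Conjecture~\ref{cMS} to Conjecture~\ref{cS}. This is not so. First, the claimed ``standard fact'' is false: one has $\hat h_\phi = h + O_\phi(1)$, and the implied constant genuinely depends on $\phi$ (roughly on $h_T(\phi)$, as in \eqref{cheq1}), so the number of $K$-points with $\hat h_\phi \le \epsilon$ cannot be bounded in terms of $d$ and $D$ alone. Second, Conjecture~\ref{cS} says nothing whatsoever about preperiodic points: it gives a lower bound for $\hat h_\phi(P)$ only when $P$ is \emph{wandering}, and places no constraint on how many points can satisfy $\hat h_\phi(P)=0$. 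This is precisely why the paper assumes \emph{both} Conjecture~\ref{cMS} and Conjecture~\ref{cS} in Theorem~\ref{tcon}: in the proof, the case where $a$ is preperiodic invokes Conjecture~\ref{cMS} directly, while Conjecture~\ref{cS} is used only in the wandering case. So Conjecture~\ref{cmain} does not ``piggyback'' on Conjecture~\ref{cS} alone, and the height-theoretic route does not trade Conjecture~\ref{cMS} for Conjecture~\ref{cS}; the two conjectures are logically independent inputs.

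Your first approach (dynamical modular varieties in the style of Mazur--Kamienny--Merel) is a fair summary of the standard heuristic, and your assessment of where it stalls is accurate.
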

More generally, they have made a similar conjecture for endomorphisms of higher-dimensional projective space $\mathbb{P}^n$.

Also closely related to Conjecture \ref{cmain} is a conjecture of Silverman \cite[Conj.~4.98]{Sil} giving a lower bound for the canonical height of a wandering (i.e., non-preperiodic) rational point.
\begin{conjecture}[Silverman]
\label{cS}
Fix an embedding of $\M_d$ in projective space and let $h_{\M_d}$ denote the associated height function.  Let $K$ be a number field and $d\geq 2$ an integer.  Then there is a positive real number $\kappa(K,d)$ such that for all rational maps $\phi\in K(x)$ of degree $d$ and all wandering points $P\in \mathbb{P}^1(K)$,
\begin{equation*}
\hat{h}_\phi(P)\geq \kappa(K,d)\max\{\log \N_{K/\mathbb{Q}}\mathfrak{R}_\phi, h_{\M_d}(\langle \phi\rangle)\}.
\end{equation*}
\end{conjecture}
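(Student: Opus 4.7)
The conjecture is a dynamical analogue of Lehmer's conjecture: it demands an effective lower bound for the canonical height of a wandering $K$-rational point in terms of the ``complexity'' of $\phi$, measured either by the moduli height $h_{\M_d}(\langle\phi\rangle)$ or by the bad-reduction contribution $\log\N_{K/\mathbb{Q}}\mathfrak{R}_\phi$. My plan would be to attack the two terms inside the maximum separately and then combine them.

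For the $\log\N_{K/\mathbb{Q}}\mathfrak{R}_\phi$ half, I would use the standard decomposition
\begin{equation*}
\hat h_\phi(P) \;=\; \sum_{v\in M_K} \hat\lambda_{\phi,v}(P)
\end{equation*}
of the canonical height into local canonical heights and concentrate on the non-archimedean places $v$ of bad reduction. At such places the local Green function differs from the naive local height by a term governed by $v(\mathfrak{R}_\phi)$, and Call--Silverman-type estimates comparing $\hat\lambda_{\phi,v}(\phi^n(P))$ with $d^n\hat\lambda_{\phi,v}(P)$ should force any wandering rational point to contribute at each such $v$ a fixed positive multiple of $v(\mathfrak{R}_\phi)$. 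Summing over bad $v$ would then yield this half of the bound. Essentially, one is trying to generalise Theorem~\ref{tin} from the family $x^2+t$ to an arbitrary $\phi$.

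For the $h_{\M_d}(\langle\phi\rangle)$ half, I would first use the $\mathrm{PGL}_2(\Kbar)$-action to pass to a distinguished normal form, so that $h_{\M_d}(\langle\phi\rangle)$ becomes comparable to the coefficient height of a minimal conjugate of $\phi$. A bounded-difference estimate $|\hat h_\phi(P)-h(P)|=O(h_{\rat}(\phi))$ then recasts the question as a multiplicative lower bound of the form $h(P)\geq \kappa\, h_{\rat}(\phi)$ for wandering $P$. The natural tool is equidistribution of small points on the Berkovich projective line (Baker--Rumely, Favre--Rivera-Letelier), which would rule out concentration of small-height wandering orbits at any single place of $K$ and, combined with the usual height-machine inputs, give the desired comparison.

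The core obstacle --- and the reason the conjecture remains open --- is the uniformity in $\phi$. Already for $\phi(x)=x^d$ the required inequality degenerates to the classical Lehmer conjecture on heights of algebraic numbers, so no argument along these lines can avoid it; at best, the decomposition-plus-equidistribution strategy above should yield only a Dobrowolski-type weakening in which the right-hand side is diminished by a factor such as $(\log\log H)^{-3}$. A full proof of Silverman's conjecture would seem to require genuinely new arithmetic input beyond the local/global decomposition and equidistribution framework sketched here.
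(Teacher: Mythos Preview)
The statement you are addressing is not a theorem in the paper but a \emph{conjecture} (Conjecture~\ref{cS}, attributed to Silverman). The paper offers no proof of it whatsoever; it merely records the conjecture and then \emph{assumes} it (together with the Morton--Silverman conjecture) as a hypothesis in the proof of Theorem~\ref{tcon}. There is therefore no proof in the paper against which your proposal can be compared.

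Your write-up is not a proof either, and you are candid about this in the final paragraph. What you have produced is a heuristic outline of two natural lines of attack (local decomposition at bad primes for the $\log\N_{K/\mathbb{Q}}\mathfrak{R}_\phi$ term; equidistribution of small points for the $h_{\M_d}$ term), followed by an admission that the argument cannot close. That is a reasonable informal discussion of why the conjecture is hard, but it is not a proof proposal in any meaningful sense, and it should not be presented as one.

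One mathematical inaccuracy worth flagging: your claim that the case $\phi(x)=x^d$ ``degenerates to the classical Lehmer conjecture'' is not quite right as stated. In Conjecture~\ref{cS} the number field $K$ is \emph{fixed}, and $\kappa$ is allowed to depend on $K$. For $\phi(x)=x^d$ one has $\hat h_\phi=h$, the map has everywhere good reduction so $\log\N_{K/\mathbb{Q}}\mathfrak{R}_\phi=0$, and $h_{\M_d}(\langle x^d\rangle)$ is a fixed constant. The conjecture then asks only that $h(P)$ be bounded below by a positive constant for $P\in K^*$ not a root of unity, which is an immediate consequence of Northcott's theorem. The genuine Lehmer-type difficulty would enter only if one demanded that $\kappa$ depend solely on $[K:\mathbb{Q}]$ rather than on $K$ itself; that stronger uniformity is not part of the conjecture as the paper states it.
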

Here $\hat{h}_\phi$ is the canonical height associated to $\phi$ and $\mathfrak{R}_\phi$ is the minimal resultant of $\phi$ (see \cite{Sil} for definitions and properties).

We now relate Conjectures \ref{cMS} and \ref{cS} to Conjecture \ref{cmain}.

\begin{theorem}
\label{tcon}
Assume Conjectures \ref{cMS} and \ref{cS}.  Let $\mathcal{F}$ be a simple family of rational functions of degree $d\geq 2$ defined over a number field $k$.  Let $a \in \mathbb{P}^1(k)$ and let $K\supset k$ be a number field.  Then there exists an integer $\kappa(\mathcal{F},K,a)$ such that
\begin{equation}
\label{cineq}
|\pr|\leq\kappa(\mathcal{F},K,a)
\end{equation}
for every rational function $\phi\in \mathcal{F}(K)$.
\end{theorem}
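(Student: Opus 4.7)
The plan is to split $\pr$ according to whether its elements are preperiodic or wandering under $\phi$, and to apply one of the two conjectures to each case. First, if $a$ is preperiodic for $\phi$, say $\phi^i(a)=\phi^j(a)$ with $i>j\geq 0$, then any $x_0\in\pr$ satisfies $\phi^{N+i}(x_0)=\phi^{N+j}(x_0)$ and is itself preperiodic, so $\pr\subseteq\prp$. In this case Conjecture \ref{cMS} immediately gives $|\pr|\leq\kappa(D,d)$, where $D=[K:\mathbb{Q}]$, and we are done.

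Assume instead that $a$ is wandering under $\phi$. Then so is every $x_0\in\pr$, because its forward orbit eventually reaches $a$ and hence must be infinite. For $x_0$ with $\phi^N(x_0)=a$, functoriality of the canonical height yields $\hat h_\phi(x_0)=\hat h_\phi(a)/d^N$; applying Conjecture \ref{cS} to the wandering point $x_0$ and using the $h_{\M_d}$ branch of the maximum gives
\[
d^N\leq\frac{\hat h_\phi(a)}{\kappa(K,d)\,h_{\M_d}(\langle\phi\rangle)}.
\]
Since at most $d^N$ points of $\mathbb{P}^1(\Kbar)$ satisfy $\phi^N(x)=a$, a bound $N\leq N_0$ translates directly into a bound $|\pr|\leq\sum_{N=1}^{N_0}d^N$.

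To bound the right-hand side, I would invoke the standard estimate $\hat h_\phi(a)\leq h(a)+c_1(d)h(\phi)+c_2(d)$ (obtained from the telescoping definition of $\hat h_\phi$ together with $h(\phi(P))=dh(P)+O(h(\phi))$), reducing the task to controlling the naive height $h(\phi)$ by $h_{\M_d}(\langle\phi\rangle)$ for $\phi\in\mathcal{F}(K)$. This is where simpleness is essential: the morphism $\langle\cdot\rangle|_{\mathcal{F}}:\mathcal{F}\to\M_d$ is quasi-finite by hypothesis, so by Zariski's Main Theorem it factors as an open immersion $\mathcal{F}\hookrightarrow\tilde{\mathcal{F}}$ followed by a finite morphism $\tilde{\mathcal{F}}\to\M_d$. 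On the finite piece heights are comparable up to constants depending only on $\mathcal{F}$, and the boundary complement is a proper closed subvariety $Z\subset\mathcal{F}$ which is itself simple and of strictly smaller dimension, so it can be treated by induction on $\dim\mathcal{F}$. The residual low-height case in which $h_{\M_d}(\langle\phi\rangle)$ is bounded by an absolute constant is absorbed separately: Northcott in degree $D$ produces only finitely many classes $\langle\phi\rangle\in\M_d(K)$ of bounded $h_{\M_d}$-height, each containing only finitely many $\phi\in\mathcal{F}(K)$ by simpleness, and the maximum of $|\pr|$ over this finite exceptional list is a permissible bound.

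The hard part will be the height comparison $h(\phi)\leq c_3(\mathcal{F})h_{\M_d}(\langle\phi\rangle)+c_4(\mathcal{F})$. Unlike finite morphisms of projective varieties, quasi-finite morphisms between quasi-projective varieties can severely distort heights near the boundary of a compactification, so the Zariski's Main Theorem factorization is not in itself sufficient and must be paired with inductive control of the boundary stratum $Z$. Once that comparison is in hand, substitution back into the displayed inequality extracts the bound $d^N\leq N_1(\mathcal{F},K,a)$, and hence $|\pr|\leq\kappa(\mathcal{F},K,a)$, completing the proof.
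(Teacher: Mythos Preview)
Your overall architecture coincides with the paper's: split on whether $a$ is preperiodic (then $\pr\subset\prp$ and Conjecture~\ref{cMS} applies) or wandering (apply Conjecture~\ref{cS} to $x_0$, use $d^N\hat h_\phi(x_0)=\hat h_\phi(a)$, and bound $N$), and in the wandering case the crux is a height comparison on $\mathcal{F}$ between a naive height coming from $\rat\subset\mathbb{P}^{2d+1}$ and $h_{\M_d}(\langle\phi\rangle)$, followed by Noetherian induction on a proper closed stratum.

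The differences are in how that comparison is carried out. For your ``standard estimate'' $\hat h_\phi(a)\le h(a)+c_1 h(\phi)+c_2$, the paper invokes Call--Silverman's specialization theorem \cite[Theorem~3.1]{CS} applied to the universal family over $\rat$, which gives it in the precise uniform form needed. For the inequality $h(\phi)\ll h_{\M_d}(\langle\phi\rangle)+O(1)$ on $\mathcal{F}$, the paper compactifies $\mathcal{F}\subset\overline{\mathcal{F}}$ and $\M_d\subset\overline{\M}_d$, resolves the induced rational map by a blow-up $X\to\overline{\mathcal{F}}$, and uses that the resulting morphism $\sigma:X\to\overline{\M}_d$ is \emph{generically finite} (since $\mathcal{F}$ is simple), so $\sigma^*A$ is \emph{big}; the standard height inequality for big divisors then gives the comparison off a proper closed $\mathcal{F}'\subset\mathcal{F}$, and one recurses on $\mathcal{F}'$. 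Your Zariski's Main Theorem factorization is a less direct route to the same endpoint: the sentence ``the boundary complement is a proper closed subvariety $Z\subset\mathcal{F}$'' does not literally parse (the ZMT boundary $\tilde{\mathcal{F}}\setminus\mathcal{F}$ lies outside $\mathcal{F}$), and the claim that heights are comparable along a finite morphism of \emph{quasi}-projective varieties ultimately needs the same compactification/big-divisor argument; you correctly flag this yourself. So there is no real gap, but the paper's big-divisor formulation is cleaner and names the exceptional locus $\mathcal{F}'$ directly.

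One further simplification in the paper: it chooses the height $h_A$ on $\overline{\M}_d$ to be everywhere positive and bounded away from zero. Then the inequality
\[
d^N\kappa_2(K,d)\,h_A(\langle\phi\rangle)\ \le\ \hat h_\phi(a)\ <\ h(a)+c_1'h_A(\langle\phi\rangle)+c_2'
\]
already bounds $N$ uniformly in $\phi\in\mathcal{F}(K)$, so no separate Northcott/finite-exception argument for small $h_{\M_d}(\langle\phi\rangle)$ is required.
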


Our main tool in the proof of Theorem \ref{tcon} will be a result of Call and Silverman \cite[Theorem 3.1]{CS} on the canonical height in families of varieties.
\begin{proof}
Let $\mathcal{V}=\mathbb{P}^1\times \mathbb{P}^{2d+1}$, $T=\mathbb{P}^{2d+1}$, $\pi:\mathcal{V}\to T$ and $\psi:\mathcal{V}\to \mathbb{P}^1$ the natural projection maps, and $\phi:\mathcal{V}\to \mathcal{V}$ the rational map given by
\begin{equation*}
[x,y]\times [a_0,\ldots,a_d,b_0,\ldots,b_d]\mapsto \left[\sum_{i=0}^d a_ix^iy^{d-i},\sum_{i=0}^d b_ix^iy^{d-i}\right]\times [a_0,\ldots,a_d,b_0,\ldots,b_d].
\end{equation*}
Let $T_0=\rat\subset T=\mathbb{P}^{2d+1}$.  On each fiber $\mathcal{V}_t\cong \mathbb{P}^1$ above a point $t\in T_0$, the map $\phi_t=\phi|_{\mathcal{V}_t}:\mathbb{P}^1\to \mathbb{P}^1$ is just the endomorphism of $\mathbb{P}^1$ associated to the point $t\in T_0=\rat$.  Let  $\eta=\psi^*\mathcal{O}(1)\in \Pic(\mathcal{V})$ and let $\alpha=d$.  Note that the height function associated to $\eta$ on $\mathcal{V}$ is just $h_{\mathcal{V},\eta}((P, t))=h(\psi((P,t)))=h(P)$, where $(P,t)\in \mathcal{V}$ with $P\in \mathbb{P}^1$ and $t\in T$.  Let $h_T$ be the usual height on $\mathbb{P}^{2d+1}$.  With the above choices, Theorem 3.1 of \cite{CS} says exactly that there exist constants $c_1$ and $c_2$ such that
\begin{equation}
\label{cheq1}
|\hat{h}_{\phi_t}(P)-h_{\mathbb{P}^1}(P)|<c_1h_T(t)+c_2, \qquad \forall t\in T_0, \forall P\in \mathbb{P}^1.
\end{equation}

Let $\overline{\M}_d$ be a projective compactification of $\M_d$ and let $A$ be an ample divisor on $\overline{\M}_d$ with associated height $h_A$, which we'll assume is everywhere positive and bounded away from zero.  We now specialize to the family $\mathcal{F}$, showing that there exist constants $c_1'$ and $c_2'$, depending on $\mathcal{F}$ and $h_A$, such that
\begin{equation}
\label{cheq3}
|\hat{h}_{\phi_t}(P)-h_{\mathbb{P}^1}(P)|<c_1'h_A(\langle \phi_t\rangle)+c_2', \qquad \forall t\in \mathcal{F}, \forall P\in \mathbb{P}^1.
\end{equation}

By \eqref{cheq1}, it suffices to show that there exist constants $c_3$ and $c_4$ such that
\begin{equation}
\label{cheq2}
h_T(t)<c_3h_A(\langle \phi_t\rangle)+c_4, \qquad \forall t\in \mathcal{F}.
\end{equation}
Let $\overline{\mathcal{F}}$ be the Zariski closure of $\mathcal{F}$ in $T$.  The morphism $\mathcal{F}\to \M_d$, $t\mapsto \langle \phi_t\rangle$, induces a rational map $\overline{\mathcal{F}}\to \overline{\M}_d$.  By blowing up $\overline{\mathcal{F}}$ we can find a projective variety $X$ such that there is a morphism $X\to \overline{\mathcal{F}}$ which is an isomorphism above $\mathcal{F}$ and the morphism $\mathcal{F}\to \M_d$ extends to a morphism $\sigma:X\to \overline{\M}_d$.  Since the family $\mathcal{F}$ is simple, it follows from the definitions that the morphism $\sigma$ is generically finite.  Thus $\sigma^*A$ is a big divisor on $X$.  Identifying $\mathcal{F}\subset X$ (as well as $\mathcal{F}\subset T$), it then follows from standard properties of heights (e.g., \cite[Prop. 1.2.9(h)]{V}) that since $\sigma^*A$ is big,
\begin{equation*}
h_T(t)\ll h_{\sigma^*A}(t)+O(1)=h_A(\langle\phi_t\rangle)+O(1),
\end{equation*}
for all $t\in \mathcal{F}\setminus \mathcal{F}'$, where $\mathcal{F}'$ is some Zariski-closed subset of $\mathcal{F}$.  Since every irreducible component of $\mathcal{F}'$ is again a simple family, by repeated application of the above argument we eventually arrive at \eqref{cheq2}.

Let $\phi\in \mathcal{F}(K)$.  We now consider two cases.  Suppose first that $a$ is preperiodic for $\phi$.  Then every element of $\pr$ is preperiodic and 
\begin{equation*}
\pr\subset \prp.
\end{equation*}
By Conjecture \ref{cMS}, there is an integer $\kappa_1(D,d)$ such that 
\begin{equation*}
|\pr|\leq \kappa_1(D,d).
\end{equation*}
Suppose now that $a$ is a wandering point of $\phi$ and $\phi^N(x_0)=a$ for some $x_0\in \mathbb{P}^1(K)$ and some positive integer $N$.  Then by Conjecture \ref{cS}, there is a positive real number $\kappa_2(K,d)$ such that
\begin{equation*}
\hat{h}(x_0)\geq \kappa_2(K,d) h_A(\langle \phi\rangle).
\end{equation*}
Combining this with \eqref{cheq3} and the fact that $d^N\hat{h}(x_0)=\hat{h}(a)$, we find
\begin{equation}
\label{dN}
d^N\kappa_2(K,d) h_A(\langle \phi\rangle)\leq d^N\hat{h}(x_0)=\hat{h}(a)<h(a)+c_1'h_A(\langle \phi\rangle)+c_2'.
\end{equation}
This bounds $N$ and shows that $|\pr|\leq \kappa_3(\mathcal{F},K,a)$ for some integer $\kappa_3(\mathcal{F},K,a)$.  Now the theorem follows taking $\kappa(\mathcal{F},K,a)=\max \{\kappa_1(D,d),\kappa_3(\mathcal{F},K,a)\}$.
\end{proof}

Actually, the proof of Theorem \ref{tcon} gives something somewhat stronger.  First, an examination of the proof shows that there should exist constants $c_1$ and $c_2$, depending on $\mathcal{F}$ and $K$, such that one can take $\kappa(\mathcal{F},K,a)=c_1h(a)+c_2$ in \eqref{cineq}.  Assuming a strong form of the Mordell conjecture, Faber \cite{Fab3} showed a similar result for the family of quadratic polynomials $x^2+t$, $t\in K$.

Second, note that for a simple family $\mathcal{F}$ of rational functions of degree $d$ defined over a number field $K$ and any real constant $c$, we have $h_A(\langle \phi\rangle)<c$ for only finitely many $\phi\in \mathcal{F}(K)$.  Then inequality \eqref{dN} and the proof of Theorem \ref{tcon} show that there exists a constant $\kappa(\mathcal{F},K)$ such that for any $a\in \mathbb{P}^1(k)$,
\begin{equation*}
|\pr|\leq\kappa(\mathcal{F},K)
\end{equation*}
for all but finitely many rational functions $\phi\in \mathcal{F}(K)$.  Thus, if we ignore finitely many rational functions in $\mathcal{F}(K)$ (depending on $a$), there should also be a bound independent of $a$.

Finally, one can also allow $a$ to vary with the rational function.  Let us call a subvariety $\mathcal{G}\subset \rat\times \mathbb{P}^1$ defined over $k$ a {\it $k$-family of rational functions of degree $d$ with a marked point}.  We will view a point of $\mathcal{G}$ as a pair $(\phi,a_\phi)$ where $\phi$ is a rational function and $a_\phi\in \mathbb{P}^1$.  We will call the family {\it simple} if the natural map $\mathcal{G}\to \rat\to \M_d$ has finite fibers (equivalently, there are only finitely many pairs $(\phi,a_\phi)\in \mathcal{G}(\kbar)$ with $\phi$ in a given conjugacy class of rational functions).  Theorem \ref{tcon} extends in the obvious way to simple families of rational functions with a marked point, with little change in the proof.  This suggests that Conjecture \ref{cmain} might hold in the following stronger form:
\begin{conjecture}
\label{cmain2}
Let $\mathcal{G}$ be a simple $k$-family of rational functions of degree $d\geq 2$ with a marked point.  Let $K\supset k$ be a number field of degree $D$ over $\mathbb{Q}$.  Then there exists an integer $\kappa(\mathcal{G},D)$ such that
\begin{equation*}
|{\rm Preim}(\phi,a_\phi,K)|\leq\kappa(\mathcal{G},D)
\end{equation*}
for every $(\phi,a_\phi)\in \mathcal{G}(K)$.
\end{conjecture}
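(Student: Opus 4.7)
The plan is to follow the architecture of the proof of Theorem \ref{tcon}, but to upgrade its height-theoretic inputs so that the final constant depends only on $D=[K:\mathbb{Q}]$ (and on $\mathcal{G}$), not on $K$ itself. Since Theorem \ref{tcon} already yields a bound of the shape $\kappa(\mathcal{F},K,a)$, and its extension to marked-point families is noted in the text, the substantive obstacle is that the lower bound in Conjecture \ref{cS} has the form $\kappa(K,d)$ rather than $\kappa(D,d)$. I would therefore prove Conjecture \ref{cmain2} conditionally on Conjecture \ref{cMS} together with the natural strengthening of Conjecture \ref{cS} in which the constant depends only on $D$ and $d$:
\begin{equation*}
\hat{h}_\phi(P)\geq \kappa(D,d)\max\{\log \N_{K/\mathbb{Q}}\mathfrak{R}_\phi,\,h_{\M_d}(\langle \phi\rangle)\}
\end{equation*}
for every $\phi\in K(x)$ of degree $d$ with $[K:\mathbb{Q}]\leq D$ and every wandering $P\in \mathbb{P}^1(K)$.

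Granting this, the argument runs as follows. Apply the Call--Silverman specialization theorem to the family $\mathcal{G}\subset \rat\times\mathbb{P}^1$, viewed via its projection to $\rat$, to obtain constants $c_1',c_2'$ depending only on $\mathcal{G}$ with
\begin{equation*}
|\hat{h}_\phi(P)-h(P)|\leq c_1'h_A(\langle\phi\rangle)+c_2', \qquad \forall (\phi,a_\phi)\in\mathcal{G},\ \forall P\in\mathbb{P}^1,
\end{equation*}
exactly as in \eqref{cheq3}. Next, use the marked-point projection $(\phi,a_\phi)\mapsto a_\phi$ together with the simpleness of $\mathcal{G}\to\M_d$ and the blow-up/bigness argument from the proof of Theorem \ref{tcon} to derive $h(a_\phi)\leq c_3 h_A(\langle\phi\rangle)+c_4$ for constants $c_3,c_4$ depending on $\mathcal{G}$; hence $\hat h_\phi(a_\phi)\leq C_1 h_A(\langle\phi\rangle)+C_2$. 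Now split on whether $a_\phi$ is preperiodic: if it is, then ${\rm Preim}(\phi,a_\phi,K)\subset \prp$ and Conjecture \ref{cMS} gives a bound $\kappa_1(D,d)$; if instead $a_\phi$ is wandering, then any preimage $x_0$ with $\phi^N(x_0)=a_\phi$ is wandering as well, the strengthened height conjecture applies to $x_0$, and combining with $d^N\hat h_\phi(x_0)=\hat h_\phi(a_\phi)$ yields
\begin{equation*}
d^N\kappa(D,d)\,h_A(\langle\phi\rangle)\leq C_1 h_A(\langle\phi\rangle)+C_2.
\end{equation*}

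For $(\phi,a_\phi)$ with $h_A(\langle\phi\rangle)$ above a threshold $B_0(\mathcal{G},D)$, this bounds $N$ by some $N_0(\mathcal{G},D)$, whence $|{\rm Preim}(\phi,a_\phi,K)|\leq \sum_{n=1}^{N_0}d^n$ (counting $K$-rational roots of $\phi^n(x)=a_\phi$ for $n\leq N_0$). The remaining case $h_A(\langle\phi\rangle)\leq B_0$ is handled by Northcott applied to $\mathcal{G}$: only finitely many $(\phi,a_\phi)\in\mathcal{G}(\Qbar)$ with bounded height and $[k(\phi,a_\phi):k]\leq D$ exist, and for each the same wandering-case argument, with a $\phi$-dependent constant from the strengthened Silverman bound, produces its own bound on $N$; taking the maximum over this finite set and combining with the preperiodic bound gives $\kappa(\mathcal{G},D)$. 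The main obstacle is not the execution but the availability of the $K$-uniform strengthening of Conjecture \ref{cS}: replacing $\kappa(K,d)$ by $\kappa(D,d)$ asserts that the canonical height of a wandering point cannot be made arbitrarily small merely by enlarging the field of definition within a fixed degree bound, a uniformity apparently well beyond what current height techniques can reach. A secondary, more technical point is establishing the marked-point bound $h(a_\phi)\leq c_3 h_A(\langle\phi\rangle)+c_4$ in full generality; this requires running the iterated blow-up argument of Theorem \ref{tcon} on the composite $\mathcal{G}\to \M_d\times\mathbb{P}^1$ and verifying that no exceptional locus obstructs the bigness used to compare heights.
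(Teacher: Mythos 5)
This statement is presented in the paper as an open conjecture, not a theorem: the paper offers no proof of Conjecture \ref{cmain2}, only the remark that Theorem \ref{tcon} ``extends in the obvious way'' to simple families with a marked point, which yields a bound $\kappa(\mathcal{G},K)$ depending on the field $K$ itself and conditional on Conjectures \ref{cMS} and \ref{cS}. Your argument is therefore not a proof of the statement but a conditional reduction, and you have correctly identified where the genuine difficulty sits: passing from $\kappa(\mathcal{G},K)$ to $\kappa(\mathcal{G},D)$ requires replacing the constant $\kappa(K,d)$ in Silverman's height lower bound by one depending only on $D=[K:\mathbb{Q}]$. That strengthening is itself an open conjecture at least as deep as the target (it is a uniform Lehmer-type statement for canonical heights across all fields of bounded degree), so assuming it does not constitute progress on Conjecture \ref{cmain2}; it relocates essentially all of the content of the conjecture into the hypothesis. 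The paper deliberately stops at Theorem \ref{tcon} for exactly this reason.

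Within that conditional framework, your execution does track the paper's own (sketched) reasoning and is essentially sound: the Call--Silverman estimate \eqref{cheq1} and the bigness/blow-up argument give \eqref{cheq3} with constants depending only on the family; for a simple marked family the quasi-finiteness of $\mathcal{G}\to\M_d$ lets the same argument control $h(a_\phi)$ by $h_A(\langle\phi\rangle)$ (inducting over components of the exceptional locus exactly as in the proof of \eqref{cheq2}); the dichotomy between $a_\phi$ preperiodic (handled by Conjecture \ref{cMS}) and $a_\phi$ wandering (handled by the analogue of \eqref{dN}) is the right one; and the residual finite set of pairs of bounded height is disposed of by Northcott together with the standard fact, noted in the introduction, that for fixed $\phi$ and $a$ the preimage count is bounded in terms of $D$ alone. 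Two small remarks: since the paper normalizes $h_A$ to be bounded away from zero, the inequality $d^N\kappa(D,d)h_A(\langle\phi\rangle)\leq C_1h_A(\langle\phi\rangle)+C_2$ already bounds $N$ uniformly without your threshold-plus-Northcott split, though your version is not wrong; and you should state explicitly that $x_0$ is wandering whenever $a_\phi=\phi^N(x_0)$ is, which is what licenses applying the height lower bound to $x_0$. But the bottom line is that what you have written is a plausibility argument of the same kind the paper itself gestures at, not a proof of the conjecture.
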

Some further (weak) evidence for this conjecture is given in Theorem \ref{m2}.

Special cases of Conjecture \ref{cmain} are closely related to (known) results about uniform boundedness for torsion on elliptic curves.  We recall Merel's fundamental result (which built on earlier work of Mazur and Kamienny):
\begin{theorem}[Merel \cite{Merel}]
There exists an integer $\kappa(D)$ such that for any elliptic curve $E$ over a number field $K$ of degree $D$ over $\mathbb{Q}$, we have $|E(K)_{\tors}|\leq \kappa(D)$.
\end{theorem}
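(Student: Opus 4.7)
The plan is to follow Merel's strategy, building on Mazur's theorem (the case $D=1$) and Kamienny's extension to $D=2$. The first reduction is that it suffices to bound the possible prime orders of $K$-rational torsion points on elliptic curves $E/K$ with $[K:\mathbb{Q}]=D$: once the set of such primes is finite, standard reduction-modulo-$\ell$ arguments bound the $p$-primary part of $E(K)_{\tors}$ for each fixed $p$ uniformly in $D$, and the two bounds together yield the theorem. So, given a pair $(E,P)$ with $P\in E(K)$ of prime order $p$, one passes to modular curves: such a pair corresponds to a non-cuspidal $K$-rational point of $X_1(p)$, equivalently a closed point of $X_1(p)_{\mathbb{Q}}$ of degree $\leq D$, whose formal sum of Galois conjugates gives a $\mathbb{Q}$-rational point of the $D$-th symmetric power $X_1(p)^{(D)}$. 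The problem then reduces to showing that, for $p$ sufficiently large in terms of $D$, every $\mathbb{Q}$-rational point of $X_1(p)^{(D)}$ is cuspidal.

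The key tool is the \emph{winding quotient} $J_e$ of $J_1(p)$. By the theorem of Kolyvagin-Logachev (building on Gross-Zagier-Kolyvagin), $J_e(\mathbb{Q})$ is finite, so the image of $X_1(p)^{(D)}(\mathbb{Q})$ under the composition $X_1(p)^{(D)} \to J_1(p) \to J_e$ (using the cuspidal divisor $[\infty]^{(D)}$ as basepoint for the Abel--Jacobi map) is contained in a finite set. The crucial and hardest remaining step would be to show that this composition is a \emph{formal immersion} at $[\infty]^{(D)}$ after reduction modulo a well-chosen auxiliary prime $\ell$; this forces any $\mathbb{Q}$-point whose image matches that of $[\infty]^{(D)}$ to actually equal $[\infty]^{(D)}$.

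Formal immersion would be verified via Kamienny's criterion: one has to show that a certain $D\times D$ matrix encoding the action of Hecke operators on modular symbols supported at the cusp is nondegenerate modulo $\ell$. The plan would be to use Manin symbols and explicit $q$-expansion calculations to produce this nondegeneracy for all sufficiently large $p$, obtaining a bound of the form $p \leq D^{3D^2}$ (as in Merel's original argument; later sharpened by Oesterl\'e and Parent). The main obstacle is precisely this nondegeneracy statement, which turns on delicate combinatorics of Manin symbols modulo $\ell$ and the arithmetic of the Hecke algebra; a secondary obstacle, handled via an Eisenstein-descent argument in the style of Mazur with Kamienny's refinement, is to extend the formal immersion conclusion from $[\infty]^{(D)}$ to the other cuspidal divisors of $X_1(p)^{(D)}$, so that no non-cuspidal $\mathbb{Q}$-point can mix with a cusp.
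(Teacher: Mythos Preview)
The paper does not prove this theorem at all; it is quoted as a known result, attributed to Merel (building on Mazur and Kamienny), and is used only as background to explain the connection between Conjecture~\ref{cmain} and torsion on elliptic curves. There is therefore no ``paper's own proof'' to compare against.

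Your outline is a faithful sketch of Merel's actual strategy (reduction to bounding torsion of prime order, passage to symmetric powers of $X_1(p)$, the winding quotient and its finite Mordell--Weil group via Kolyvagin--Logachev, and the formal immersion criterion of Mazur--Kamienny verified through Hecke operators and Manin symbols). As a summary of the literature it is accurate, though of course the genuinely hard step---the nondegeneracy of the Hecke matrix modulo $\ell$---is only named, not carried out. But none of this appears in, or is expected by, the present paper.
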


A weak version of Merel's theorem is related to Conjecture \ref{cmain} in the following way.  If $E$ is an elliptic curve in Weierstrass form, $a=\infty$, and $\phi=\phi_{E,m}$, $m>1$, is the rational function expressing $x(mP)$ in terms of $x(P)$, where $x(P)$ is the $x$-coordinate of a point $P\in E$, then the set $\pr$ contains the set of $x$-coordinates of points in the set $E[m^{\infty}](K)$, i.e., the set of all $K$-rational $m^N$-torsion points of $E$, $N\geq 1$.  As is well-known, the set of isomorphism classes of elliptic curves (over $\Qbar$) can be parametrized by $\mathbb{A}^1$, and such a parametrization gives rise, for a given $m>1$, to a one-dimensional simple family of rational functions $\phi_{E,m}$.  Then Conjecture \ref{cmain} easily implies (the known result) that there is an integer $\kappa(m,D)$ such that for any elliptic curve $E$ over a number field $K$ of degree $D$ over $\mathbb{Q}$, we have $|E[m^{\infty}](K)|\leq \kappa(m,D)$.  For a fixed number field $K$, this result on torsion was first proven by Manin \cite{Man}.  A higher-dimensional version of Conjecture \ref{cmain} would similarly have consequences for torsion on higher-dimensional abelian varieties (see the constructions in \cite{Fak}).  

The more usual connection between torsion on elliptic curves and arithmetic dynamics is through the set of preperiodic points.  The set of preperiodic points of $\phi_{E,m}$, for any $m>1$, corresponds exactly to the set of torsion points of $E$.  So taking $m=2$, the $d=4$ case of Morton and Silverman's conjecture (Conjecture \ref{cMS}) is sufficient to imply Merel's theorem.

\section{Unit equations and the proof of Theorem \ref{m1}}

Before proving Theorem \ref{m1} we recall a fundamental result of Evertse \cite{Ev} giving a bound for the number of ``non-degenerate" solutions to an $n$-term unit equation.

\label{secunit}
\begin{theorem}[Evertse \cite{Ev}]
\label{Evert}
Let $K$ be a number field and let $S$ be a finite set of places of $K$ containing the archimedean places.  Let $c_1,\ldots,c_n\in K^*$.  Suppose that $S$ has cardinality $s$.  Then the equation
\begin{align}
\label{ue}
c_1u_1+\cdots+c_nu_n=1 \quad &\text{ in $u_1,\ldots,u_n\in \co_{K,S}^*$ with}\\
& \sum_{i\in I}c_iu_i\neq 0 \text{ for each nonempty subset $I\subset \{1,\ldots,n\}$}\notag
\end{align}
has at most $(2^{35}n^2)^{n^3s}$ solutions.
\end{theorem}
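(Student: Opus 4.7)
The plan is to reduce the counting problem to an application of the Quantitative Subspace Theorem of Schmidt, extended to the $S$-integer setting by Schlickewei. First, view a solution $(u_1,\ldots,u_n)$ as the point $\mathbf{x}=(c_1u_1,\ldots,c_nu_n)$ lying on the fixed hyperplane $H:y_1+\cdots+y_n=1$ in $\mathbb{A}^n$, so we are counting $S$-integer points on $H$ whose coordinates are units and such that no proper coordinate subsum vanishes. After suitable homogenization, each solution corresponds to a point in $\mathbb{P}^{n-1}(K)$.

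The second step is to build the right system of linear forms for the Subspace Theorem. For each place $v\in S$, order the coordinates so that $|c_{i_1(v)}u_{i_1(v)}|_v\geq\cdots\geq|c_{i_n(v)}u_{i_n(v)}|_v$, and take as the $v$-adic system the coordinate forms $y_{i_j(v)}$ except that the largest is replaced by $y_1+\cdots+y_n$. Because $y_1+\cdots+y_n=1$ on the hyperplane while the $|c_iu_i|_v$ can be much larger, the product $\prod_{v\in S}\prod_{j=1}^n |L_{v,j}(\mathbf{x})|_v/\|\mathbf{x}\|_v$ is forced to be small relative to $H(\mathbf{x})^{-\varepsilon}$ whenever $H(\mathbf{x})$ is large, triggering the hypothesis of the Subspace Theorem.

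The conclusion of the Subspace Theorem is that all but finitely many $\mathbf{x}$ lie in one of finitely many proper linear subspaces of $\mathbb{P}^{n-1}$. On such a subspace the $u_i$ satisfy a nontrivial linear relation over $K$; grouping terms and using the non-degeneracy hypothesis (no proper subsum $\sum_{i\in I}c_iu_i$ vanishes), one extracts from this relation a strictly shorter non-degenerate unit equation in some variables $u_i/u_j$, to which one applies induction on $n$. The base case $n=1$ is trivial, and $n=2$ is the classical two-variable $S$-unit equation. The explicit bound $(2^{35}n^2)^{n^3s}$ then follows by combining Evertse's quantitative form of the Subspace Theorem (which gives an explicit count on the number of exceptional subspaces in terms of $n$ and $s$ alone) with the recursion on $n$.

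The main obstacle, and the reason the statement is deep, is achieving a bound depending on $n$ and $s$ only, with no dependence on the coefficients $c_i$ or on the field $K$ beyond $s=|S|$. This is possible only because the quantitative Subspace Theorem provides a bound on the number of exceptional subspaces that is uniform in the linear forms, and because our choice of $L_{v,j}$ above depends only on the partition of $\{1,\ldots,n\}$ by the $v$-adic orderings of the coordinates of $\mathbf{x}$, a combinatorial datum with at most $(n!)^s$ values. Carefully keeping track of this combinatorial bookkeeping through the inductive reduction, and absorbing it into the exponent, is the technically delicate part of the argument.
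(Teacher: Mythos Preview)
The paper does not prove this theorem at all: it is quoted as a known result of Evertse and used as a black box in the proof of Theorem~\ref{m1}. So there is nothing in the paper to compare your argument against.

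That said, your outline is broadly faithful to the strategy of Evertse's original proof: set up a family of linear forms at each $v\in S$ (coordinate forms with the $v$-adically largest replaced by the sum $y_1+\cdots+y_n$), invoke a quantitative Subspace Theorem to confine large solutions to a bounded number of proper subspaces, then use the non-degeneracy condition to pass to a shorter unit equation and induct on $n$. Two points deserve care if you intend this as more than a sketch. First, obtaining a bound depending only on $n$ and $s$---with no hidden dependence on $K$, on the heights of the $c_i$, or on the specific linear forms---requires the \emph{absolute} quantitative Subspace Theorem (Evertse--Schlickewei), not merely Schmidt's original version; your remark about the $(n!)^s$ orderings is part of this, but the uniformity of the subspace count itself is the deeper input. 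Second, the inductive step is more delicate than ``extract a shorter non-degenerate unit equation'': on a fixed exceptional subspace the induced relation among the $u_i$ need not itself be non-degenerate, and one must further partition into minimal vanishing subsums and control how many solutions can collapse under the passage to ratios $u_i/u_j$. None of this is wrong in your plan, but the displayed constant $(2^{35}n^2)^{n^3s}$ comes from a careful bookkeeping of exactly these steps, and your proposal does not yet supply that bookkeeping.
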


\begin{proof}[Proof of Theorem \ref{m1}]
It suffices to prove the last part of the theorem.  Indeed, if $\phi=x^d$, then $|{\rm Preim}(\phi,a,K)|\leq \kappa''(d,D,a)$ for some $\kappa''(d,D,a)$.  Otherwise, if $\phi\neq x^d$ with $a_1,\ldots,a_{d-1},b_1,\ldots,b_{d-1}\in \co_{K,S}$ as in the theorem, then for some finite set of places $T$ of $K$, $a\in \co_{K,T}^*$, and so $|{\rm Preim}(\phi,a,K)|\leq \kappa'(d,|S\cup T|)$.  Then taking 
\begin{equation*}
\kappa(d,a,|S|)=\max \{\kappa''(d,2|S|,a),\kappa'(d,|S|+|T|)\} 
\end{equation*}
gives the first part of the theorem (note that $|T|$ depends only on $a$ and $D\leq 2|S|$).

We now assume that $a\in \co_{k,S}^*$.  Set $x_0=a$.  Let $x_1,\ldots, x_N$ satisfy $\phi(x_i)=x_{i-1}$ for $i=1,\ldots, N$ with all of the $x_i$ distinct.  Explicitly, $x_i$ satisfies the equation
\begin{equation}
\label{xeq}
x_i^d+a_{d-1}x_i^{d-1}+\cdots+a_1x_i-(b_{d-1}x_i^{d-1}+b_{d-2}x_i^{d-2}+\cdots+b_1x_i+1)x_{i-1}=0,
\end{equation}
$i=1,\ldots, N$.  Note that $\phi^i(x_i)=a$, $i=1,\ldots, N$.  Suppose that $x_i$ is $K$-rational for $i=1,\ldots, N$.  From \eqref{xeq} and induction, it is immediate that $x_i\in \co_{k,S}^*$ for $i=0,\ldots, N$.  Let $A$ be the matrix
\begin{equation*}
\tiny
\left[ \begin{array}{ccccccccc}
x_1^d-x_0 & x_1^{d-1} & x_1^{d-2} & \cdots & x_1 & x_1^{d-1}x_0 & x_1^{d-2}x_0 &\cdots & x_1x_0\\
x_2^d-x_1 & x_2^{d-1} & x_2^{d-2} & \cdots & x_2 & x_2^{d-1}x_1 & x_2^{d-2}x_1 &\cdots & x_2x_1\\
&\vdots\\
x_{2d-1}^d-x_{2d-2} & x_{2d-1}^{d-1} & x_{2d-1}^{d-2} & \cdots & x_{2d-1} & x_{2d-1}^{d-1}x_{2d-2} & x_{2d-1}^{d-2}x_{2d-2} &\cdots & x_{2d-1}x_{2d-2}\\
\end{array} \right]
\end{equation*}
and 
\begin{equation*}
\mathbf{v}=\left[ \begin{array}{ccccccc}
1 & a_{d-1} & \cdots & a_1 & -b_{d-1} & \cdots & -b_1
\end{array}\right]^T.  
\end{equation*}
Then we can rewrite the system of equations \eqref{xeq}, $i=1,\ldots, 2d-1$, as
\begin{equation*}
A\mathbf{v}=\mathbf{0}.
\end{equation*}
Since $\mathbf{v}$ is nontrivial, we have $\det A=0$.  Thus, there exists a polynomial $P(z_1,\ldots,z_{2d})$ in $\mathbb{Z}[z_1,\ldots,z_{2d}]$ such that $P(x_0,\ldots,x_{2d-1})=0$.  By the same argument, we see that $P(x_i,x_{i+1},\ldots,x_{2d-1+i})=0$ for any positive integer $i$ with $2d-1+i\leq N$.  Note that the polynomial $P$ depends only on the degree $d$.  Let $M_1,\ldots, M_m$ be the monomials appearing in $P(z_1,\ldots,z_{2d})$ and let 
\begin{equation*}
P(z_1,\ldots,z_{2d})=\sum_{j=1}^m c_jM_j(z_1,\ldots,z_{2d}), 
\end{equation*}
where $c_i\in \mathbb{Z}$ are the coefficients.  From our previous remarks, $M_j(x_i,\ldots,x_{2d-1+i})$ is an $S$-unit for all $i$ and $j$ and
\begin{equation*}
P(x_i,\ldots,x_{2d-1+i})=\sum_{j=1}^m c_jM_j(x_i,\ldots,x_{2d-1+i})=0
\end{equation*}
for all $i$.  So for $i=1,\ldots, N-2d+1$, we obtain a solution to the $S$-unit equation
\begin{equation*}
c_1u_1+\cdots+c_mu_m=0, \qquad u_1,\ldots,u_m\in \co_{k,S}^*,
\end{equation*}
where 
\begin{equation*}
(u_1,\ldots,u_m)=(M_1(x_i,\ldots,x_{2d-1+i}),\ldots,M_m(x_i,\ldots,x_{2d-1+i})).  
\end{equation*}

For some index set $J\subset \{1,\ldots, m\}$, there are at least $\frac{N-2d+1}{2^m}$ values of $i$ such that setting $u_j=M_j(x_i,\ldots,x_{2d-1+i})$ gives a solution to
\begin{equation*}
\sum_{j\in J}c_ju_j=0,
\end{equation*}
where $\sum_{j\in J'}c_ju_j\neq 0$ for every nonempty proper subset $J'\subset J$.  Dividing by any element $-c_ju_j$, $j\in J$, yields a solution to an equation as in  \eqref{ue}.  We now prove a lemma that bounds the number of distinct solutions to \eqref{ue} that are thus obtained.

\begin{lemma}
\label{lemman}
Let $j,j'\in \{1,\ldots,m\}$, $j\neq j'$.  Let $\alpha\in k^*$.  Let $\delta$ be the degree of the rational function 
\begin{equation*}
\frac{M_j(\phi^{2d-1}(x),\ldots,\phi(x),x)}{M_{j'}(\phi^{2d-1}(x),\ldots,\phi(x),x)}.
\end{equation*}
The equation
\begin{equation*}
M_j(x_i,\ldots,x_{2d-1+i})=\alpha M_{j'}(x_i,\ldots,x_{2d-1+i})
\end{equation*}
has at most $\delta$ distinct solutions $i\in \{1,\ldots,N-2d+1\}$.
\end{lemma}
\begin{proof}
Each solution $i$ gives a solution $x=x_{2d-1+i}$ to
\begin{equation*}
\frac{M_j(\phi^{2d-1}(x),\ldots,\phi(x),x)}{M_{j'}(\phi^{2d-1}(x),\ldots,\phi(x),x)}=\alpha.
\end{equation*}
If there are more than $\delta$ solutions in $i$ to 
\begin{equation*}
M_j(x_i,\ldots,x_{2d-1+i})=\alpha M_{j'}(x_i,\ldots,x_{2d-1+i}), 
\end{equation*}
then since the $x_i$ are distinct, from the definition of $\delta$ we must have that $\delta=0$ and 
\begin{equation}
\label{mjid}
M_j(\phi^{2d-1}(x),\ldots,\phi(x),x)=\alpha M_{j'}(\phi^{2d-1}(x),\ldots,\phi(x),x)
\end{equation}
identically.  We now show that this is impossible.  We may assume, after cancelling, that no variable appears nontrivially in both $M_j$ and $M_j'$ and (after possibly interchanging $j$ and $j'$) that for some $k\in \{1,\ldots,2d\}$, $z_k$ appears in $M_j(z_1,\ldots,z_{2d})$ and $z_l$ doesn't appear in $M_{j'}(z_1,\ldots,z_{2d})$ for $l\leq k$.  Since $\phi\neq x^d$, there exists $y\in \Kbar$ such that $\phi^{2d-k}(y)=0$ and $\phi^{2d-k'}(y)\neq 0$ for $2d\geq k'> k$.  For such a $y$ we have 
\begin{align*}
M_j(\phi^{2d-1}(y),\ldots,\phi(y),y)=0,\\
M_{j'}(\phi^{2d-1}(y),\ldots,\phi(y),y)\neq 0, 
\end{align*}
contradicting the identity \eqref{mjid}.
\end{proof}

Let 
\begin{equation*}
\mu=\max \deg\frac{M_j(\phi^{2d-1}(x),\ldots,\phi(x),x)}{M_{j'}(\phi^{2d-1}(x),\ldots,\phi(x),x)},
\end{equation*}
where $j,j'\in \{1,\ldots, m\}, j\neq j'$.  Then from the above and Lemma \ref{lemman}, we obtain at least $\frac{N-2d+1}{\mu 2^m}$ distinct solutions to \eqref{ue}.  By Theorem \ref{Evert}, it follows that 
\begin{equation*}
\frac{N-2d+1}{\mu 2^m}\leq (2^{35}m^2)^{m^3s}, 
\end{equation*}
where $s=|S|$.  Since $A$ is a $(2d-1)\times (2d-1)$ matrix, we can trivially estimate $m\leq (2d)!$.  From the explicit form of $A$, we easily find that for any $j$ and $l$, 
\begin{equation*}
\max \deg_{z_l} M_j(z_1,\ldots,z_{2d-1})\leq d+1.  
\end{equation*}
Since $\deg \phi^{j}(x)=d^j$, this implies that 
\begin{equation*}
\mu\leq (2d-1)d^{(2d-1)(d+1)}\leq d^{3d^2}.  
\end{equation*}
This gives an explicit bound on $N$ in terms of $d$ and $s$.  If $N$ is maximally chosen such that $x_1,\ldots, x_N\in K$ are distinct and $\phi^i(x_i)=a$, then we have the inequality 
\begin{equation*}
|{\rm Preim}(\phi,a,K)|\leq \sum_{i=1}^Nd^i\leq d^{N+1}.  
\end{equation*}
Using crude estimates and the bounds above, we then easily find, for instance, that
\begin{equation*}
|{\rm Preim}(\phi,a,K)|\leq \exp(\exp(d^{20d}s)).
\end{equation*}

\end{proof}

\section{Runge's method and the proof of Theorem \ref{m2}}

In the next section we reduce Theorem \ref{m2} to a problem about integral points on certain affine curves.  In the subsequent section, by using Runge's method and a result of Baker (Theorem \ref{tB}), the problem on integral points is reduced to the study of the arithmetic-geometric structure at infinity of these affine curves.  

 Runge's method applies, roughly, to affine curves which have enough rational points at infinity relative to various arithmetic parameters.  A natural class of curves which may frequently satisfy this constraint are curves that parametrize various algebro-geometric objects of interest.  As the rationality of the points on such curves (including the points at infinity) frequently carries significant arithmetic meaning, one may hope that the points at infinity have some structure that permits the application of Runge's method (on a ``generic" affine curve over $\mathbb{Q}$, in contrast, one expects there to be a single $\Gal(\Qbar/\mathbb{Q})$-orbit of points at infinity).  Indeed, Runge's method has been recently applied by Bilu and Parent \cite{BP} with surprising success to certain modular curves arising in a conjecture of Serre on Galois representations associated to elliptic curves (see also \cite{BP2} for another application to modular curves).  Here, we will also apply Runge's method to certain ``modular curves" which arise in the study of $\pr$.  This will reduce Theorem \ref{m2} to the study of the structure at infinity of these curves, allowing us to avoid the more intricate and difficult questions regarding the genus and reducibility of such curves.

Finally, in the last section we prove the necessary facts about the points at infinity on the relevant affine curves.

\subsection{Reduction to a theorem on integral points}
\label{sred}


Let $K$ be a number field and let $\mathcal{F}$ be a one-parameter simple family of rational functions of the following form:
\begin{equation*}
\phi_t(x)=\frac{x^d+a_{d-1}(t)x^{d-1}+\cdots+a_1(t)x+a_0(t)}{b_{d-1}(t)x^{d-1}+\cdots+b_1(t)x+b_0(t)},
\end{equation*}
where $d\geq 2$ is an integer and $a_0,\ldots,a_{d-1},b_0,\ldots,b_{d-1}\in K[t]$ are polynomials over $K$.  Let us also fix a polynomial $a\in K[t]$.

We now introduce some notation (we will follow, mostly, the notation of \cite{Fab}).  For simplicity, we will notationally omit the dependence on the polynomials $a$, $a_0,\ldots,a_{d-1}, b_0,\ldots,b_{d-1}$ (which we take as fixed).  For $N$ a positive integer, let $Y^{\pre}(N)$ denote the algebraic set in $\mathbb{A}^2$ (with coordinates $x$ and $t$) defined by $\phi_t^N(x)-a(t)=0$ (that is, the polynomial equation obtained after clearing denominators).  For each $N$, we have a rational map $\delta_N:Y^{\pre}(N)\to Y^{\pre}(N-1)$ given by $(x,t)\mapsto (\phi_t(x),t)$.

Let $Y^{\pre}(N)(\co_{L,S})=Y^{\pre}(N)\cap\mathbb{A}^2(\co_{L,S})$ be the set of $S$-integral points of $Y^{\pre}(N)$.  With $a,a_0,\ldots,a_{d-1},b_0,\ldots,b_{d-1}$ and the definitions as above, consider the following two statements:

\begin{statementA}
Let $s$ be a positive integer.  There exists a finite set of affine plane curves $\mathcal{C}=\mathcal{C}(s)$ such that for any positive integer $N$, the set of integral points
\begin{equation*}
\bigcup_{\substack{L\supset K,S_L\\|S_L|<s}}(Y^{\pre}(N)\setminus \cup_{C\in\mathcal{C}}C)(\co_{L,S_L})
\end{equation*}
is finite, where $L$ ranges over all number fields containing $K$ and $S_L$ ranges over all sets of places of $L$ containing the archimedean places.  Furthermore, $\mathcal{C}$ and this finite set of integral points, for a given $N$, are both effectively computable.
\end{statementA}

\begin{statementB}
Suppose that $K$ is a number field of degree $D$ over $\mathbb{Q}$.  Let $t\in K$ with $\deg \phi_t>1$ and let $s(t)$ be the number of primes $\mathfrak{p}$ of $K$ for which $|t|_\mathfrak{p}>1$.  There exists an effectively computable integer $\kappa(D,s(t))$ such that
\begin{equation*}
|{\rm Preim}(\phi_t,a(t),K)|\leq \kappa(D,s(t)).
\end{equation*}
\end{statementB}





\begin{lemma}
Statement A implies Statement B.
\end{lemma}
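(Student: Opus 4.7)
The plan is to translate preimages of $a(t_0)$ under $\phi_{t_0}$ into $S$-integral points on the family of curves $Y^{\pre}(N)$, apply Statement A, and separately control the two resulting contributions.

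Fix $t_0\in K$ with $\deg\phi_{t_0}>1$, and let $S$ consist of the archimedean places of $K$ together with $\{\mathfrak{p}:|t_0|_\mathfrak{p}>1\}$, so $|S|\leq D+s(t_0)$. First I would verify that any $K$-rational $x_0$ with $\phi_{t_0}^N(x_0)=a(t_0)$ lies in $\co_{K,S'}$, where $S'=S\cup T$ and $T$ is an effectively computable finite set of primes depending only on $a$, $b$, $c$ (not on $N$ or $x_0$). In part (a) the polynomial $\phi_{t_0}^N(x)-a(t_0)$ is monic of degree $2^N$ in $x$ with coefficients in $\co_{K,S}$, so $x_0\in\co_{K,S}$ (take $T=\emptyset$). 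In part (b) the hypothesis $b(t_0)c(t_0)\neq 1$ prevents the degeneracy in the cleared equation that would spoil integrality, and the $x$-leading coefficient is absorbed into a finite $T$. Thus $(x_0,t_0)\in Y^{\pre}(N)(\co_{K,S'})$ with $|S'|\leq D+s(t_0)+c_0$ for a constant $c_0$ depending only on $a,b,c$.

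Next I would apply Statement A with $s=|S'|+1$, obtaining the finite effective collection $\mathcal{C}(s)$ of exceptional plane curves and, for every $N$, a finite effective set of integral points on $Y^{\pre}(N)\setminus\bigcup_{C\in\mathcal{C}(s)}C$. I split the preimages into two classes. If $(x_0,t_0)$ lies on some $C\in\mathcal{C}(s)$, then $x_0$ is a root of $C(x,t_0)\in K[x]$, so across all $N$ and all $C\in\mathcal{C}(s)$ there are at most $\kappa_1(s):=\sum_{C\in\mathcal{C}(s)}\deg_x C$ possibilities, an effective constant depending only on $D$ and $s(t_0)$. If $(x_0,t_0)$ is not on any exceptional curve, then it lies in the finite effective set of generic integral points supplied by Statement A at level $N$; using the compatibility maps $\delta_N:(x,t)\mapsto(\phi_t(x),t)$, a level-$N$ preimage produces preimages at every lower level as forward iterates, and I would argue that the effective output of Statement A, combined with this iteration structure, bounds the level $N$ at which such a generic preimage can occur by an effective $N_0=N_0(D,s(t_0))$. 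Summing trivial per-level estimates up to $N_0$ yields a bound $\kappa_2(D,s(t_0))$ for the generic contribution, and $\kappa(D,s(t_0))=\kappa_1(s)+\kappa_2(D,s(t_0))$ then gives Statement B.

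The main obstacle is this last step: converting Statement A's per-$N$ finiteness into a uniform-in-$N$ bound. Statement A does not, on its face, rule out that the generic integral set could grow with $N$, so one must leverage the effective content of Statement A together with the iteration structure $\delta_N$ (and the fact that a single $K$-rational preimage at level $N$ forces a compatible chain of $S'$-integral points on every $Y^{\pre}(N')$ with $N'\leq N$) to exclude preimages beyond an explicit threshold $N_0(D,s(t_0))$. The remaining parts of the argument (integrality, exceptional-curve bookkeeping) are routine once the integrality step in part (b) is set up correctly.
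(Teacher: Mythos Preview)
Your setup (integrality of preimages, application of Statement A, and the ``on $\mathcal{C}$'' bound $\sum_{C\in\mathcal{C}}\deg_x C$) matches the paper's. The gap is exactly where you flag it: the attempt to bound the level $N$ for ``generic'' preimages. Statement~A gives, for each $N$ separately, a finite set of integral points on $Y^{\pre}(N)\setminus\bigcup_{C\in\mathcal{C}}C$, but nothing prevents these sets from growing with $N$, and the chain of forward iterates does not by itself produce a threshold $N_0$. Your proposed mechanism is not an argument yet.

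The paper sidesteps the problem entirely rather than bounding $N$. The trick is to first modify $\mathcal{C}$ so that it contains only irreducible components of the various $Y^{\pre}(N)$ and, crucially, so that every component of $Y^{\pre}(1)$ lies in $\mathcal{C}$. Then one introduces the finite set $\mathcal{C}'$ of components $C'$ with $C'\notin\mathcal{C}$ but $\delta_N(C')\in\mathcal{C}$; Statement~A (applied at the level where each $C'$ lives) shows that the union of $S'$-integral points on all $C'\in\mathcal{C}'$ is finite, hence their $t$-coordinates form a finite effective set $T$. Now take any preimage $(x_0,t_0)$ at level $N$ and follow the forward chain $(\phi_{t_0}^{N'}(x_0),t_0)\in Y^{\pre}(N-N')$; since the level-$1$ iterate lies on a component of $Y^{\pre}(1)\subset\mathcal{C}$, there is a first moment the chain lands on $\mathcal{C}$, and the step just before lies on some $C'\in\mathcal{C}'$. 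If $t_0\notin T$ this is impossible, forcing the first moment to be $N'=0$, i.e.\ $(x_0,t_0)$ itself lies on $\mathcal{C}$. Thus for $t_0\notin T$ every preimage is on $\mathcal{C}$ and your $\kappa_1$ bound applies; for the finitely many $t_0\in T$ one simply computes $|\mathrm{Preim}(\phi_{t_0},a(t_0),K)|$ directly. No bound on $N$ is ever needed.

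A minor side remark: the integrality step does not need case-splitting. In the general setup of Section~\ref{sred} the numerator of $\phi_t$ is monic in $x$, so once one enlarges $S$ to make all the coefficient polynomials $a,a_i,b_i$ lie in $\co_{k,S}[t]$, downward induction from $a(t_0)$ shows every $\phi_{t_0}^{N'}(x_0)$ is $S'$-integral.
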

\begin{proof}
Let $s$ be a positive integer.  Let $k$ be a number field and $S$ a finite set of places of $k$ such that $a,a_0,\ldots,a_{d-1},b_0,\ldots,b_{d-1}\in \co_{k,S}[t]$.  Let $s'=s+D(|S|+1)+1$.  Let $\mathcal{C}=\mathcal{C}(s')$ be as in Statement A.  By modifying $\mathcal{C}$, we can assume that every curve $C\in \mathcal{C}$ is a component of $Y^{\pre}(N)$ for some $N$ and that every irreducible component of $Y^{\pre}(1)$ is in $\mathcal{C}$.  Let $\mathcal{C}'$ be the finite set of affine curves $C'$ such that $C'$ is an irreducible component of $Y^{\pre}(N)$ for some $N$, $\delta_N(C')\in \mathcal{C}$, and $C'\not\in \mathcal{C}$.  It follows from Statement A that the set
\begin{equation*}
R=\bigcup_{C'\in \mathcal{C}'}\bigcup_{\substack{L\supset k,S_L\\|S_L|<s'}}C'(\co_{L,S_L})
\end{equation*}
is finite and effectively computable.  Let 
\begin{equation*}
T=T(s')=\{t\mid (x,t)\in R, \deg\phi_t>1\}, 
\end{equation*}
a finite set.  

Let $t_0\in K$ be an element satisfying $\deg \phi_{t_0}>1$ and for which there are exactly $s$ primes $\mathfrak{p}$ of $K$ with $|t_0|_\mathfrak{p}>1$.  Let $x_0\in K$ satisfy $\phi_{t_0}^N(x_0)=a(t_0)$ for some positive integer $N$.  First, we note that 
\begin{equation}
\label{seq}
(\phi_{t_0}^{N'}(x_0),t_0)\in Y^{\pre}(N-N')(\co_{K,S'}), \qquad 0\leq N'< N, 
\end{equation}
for some set of places $S'$ of $K$ satisfying $|S'|<s'$.  Indeed, $t_0\in \co_{K,S_1}$ for some finite set of places $S_1$ with $|S_1|\leq s+D$ (the $D$ term comes from the archimedean places of $S_1$).    Let $S_2$ be the set of places in $K$ lying above places in $S$.  Note that $|S_2|\leq D|S|$.  Let $S'=S_1\cup S_2$.  Then $a(t_0),a_i(t_0),b_i(t_0)\in \co_{K,S'}$ for all $i$, and it follows from the form of $\phi_{t_0}$ that $\phi_{t_0}^{N'}(x_0)\in \co_{K,S'}$ for all $0\leq N'< N$.  So \eqref{seq} holds and we have $|S'|<s'$.  

We claim that either $t_0\in T$ or $(x_0,t_0)\in C$ for some curve $C\in \mathcal{C}$.  Suppose that $t_0\not\in T$.  Let $N'\geq 0$ be the smallest integer such that $(\phi^{N'}_{t_0}(x_0),t_0)\in C$ for some curve $C\in \mathcal{C}$.  Since we have assumed that every component of $Y^{\pre}(1)$ is in $\mathcal{C}$, we have $N'\leq N-1$.  If $N'>0$, then from the definitions, $(\phi^{N'-1}_{t_0}(x_0),t_0)\in C'$, for some $C'\in \mathcal{C}'$.  Then from the above,  $(\phi^{N'-1}_{t_0}(x_0),t_0)\in C'(\co_{K,S'})\subset R$, which contradicts our assumption that $t_0\not\in T$.  Thus, we must have $N'=0$ and $(x_0,t_0)\in C$ for some curve $C\in \mathcal{C}$.  This proves our claim.

We now give the desired bound for ${\rm Preim}(\phi_{t_0},a(t_0),K)$.  Suppose first that $t_0\not\in T$.  Since for fixed $t_0$, there are at most $\deg C$ points $(x_0,t_0)$ on any curve $C\in \mathcal{C}$, we have in this case
\begin{equation*}
|{\rm Preim}(\phi_{t_0},a(t_0),K)|\leq \sum_{C\in \mathcal{C}}\deg C.
\end{equation*}

If $t_0\in T$, since $T$ is a finite set, we simply take
\begin{equation*}
\mu=\mu(s')=\max_{t\in T}|{\rm Preim}(\phi_{t},a(t),K)|
\end{equation*}
and then 
\begin{equation*}
|{\rm Preim}(\phi_{t_0},a(t_0),K)|\leq \mu.
\end{equation*}
We note that $\mu$ is effectively computable (for a naive algorithm, for fixed $t\in T$ just compute successive preimages of $a(t)$ under $\phi_t$ until at some stage none of the preimages are $K$-rational).

It follows that Statement B holds with 
\begin{equation*}
\kappa(D,s)=\max\left\{\mu(s'),\sum_{C\in \mathcal{C}(s')}\deg C\right\},
\end{equation*}
noting that $s'$ depended only on $D$ and $s$.
\end{proof}

Thus, to prove Theorem \ref{m2} it suffices to prove Statement A for the appropriate one-parameter families of rational functions.
\subsection{Runge's method}

The main tool that will be used to prove cases of Statement A is an effective method for determining integral points on certain affine curves that goes back to Runge \cite{Run}.  We will use a suitably general form of Runge's result, due to Bombieri \cite{Bomb} (see also \cite{Lev}).
\begin{theorem}[Runge, Bombieri]
\label{Run}
Let $C\subset \mathbb{A}^N$ be an affine curve defined over a number field $K$.  Let $\tilde{C}$ be a projective closure of $C$.  Let $r$ be the number of $\Gal(\Kbar/K)$-orbits of points in $\tilde{C}(\Kbar)\setminus C$ (the number of $\Gal(\Kbar/K)$-orbits of points of $C$ at infinity).  Then the set of integral points
\begin{equation*}
\bigcup_{\substack{L\supset K,S_L\\|S_L|<r}}C(\co_{L,S_L})
\end{equation*}
is finite and can be effectively determined, where $L$ ranges over all number fields containing $K$ and $S_L$ ranges over all sets of places of $L$ containing the archimedean places.
\end{theorem}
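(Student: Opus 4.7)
The plan is to adapt the classical Runge method by combining Riemann--Roch on $\tilde{C}$ with a pigeonhole argument on the places of $L$, and to sidestep the fact that $L$ itself ranges over number fields of \emph{a priori} unbounded degree by observing that the hypothesis $|S_L|<r$ already enforces a universal degree bound on $L$. First I would reduce to the case that $\tilde{C}$ is smooth by replacing it with its normalization; this only enlarges the set of integral points and leaves $r$ unchanged. Fix the $\Gal(\Kbar/K)$-orbits $O_1,\dots,O_r$ of points at infinity and let $D_i=\sum_{Q\in O_i}Q$ be the corresponding reduced $K$-rational divisors. Applying Riemann--Roch on $\tilde{C}$ to $mD_i$ with $m$ sufficiently large produces, for each $i$, a nonconstant function $g_i\in K(\tilde{C})$ whose pole divisor is supported exactly on $O_i$; $K$-rationality of $g_i$ follows from that of $mD_i$. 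After clearing denominators the $g_i$ may be taken with $\co_K$-integral coefficients as rational functions in the ambient coordinates, so that $g_i(P)\in\co_{L,S_L}$ for every integral point $P\in C(\co_{L,S_L})$.

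The heart of the proof is a pigeonhole argument on the places of $L$. Choose small $v$-adic neighborhoods $U_Q$ of each individual point at infinity $Q\in\tilde{C}(\Kbar)$, uniformly in $v$, so that distinct $Q$'s have disjoint neighborhoods in $\tilde{C}(\bar{L}_v)$. For $v\notin S_L$, the coordinates of $P$ are $v$-adic integers, so $P$ avoids every $U_Q$ and $|g_i(P)|_v\leq 1$ for all $i$. For $v\in S_L$, the $v$-adic image of $P$ lies in at most one $U_Q$; if it does, let $i(v)$ be the orbit containing that $Q$, and note that for every $i\neq i(v)$ the function $g_i$ is regular on $U_Q$, so $|g_i(P)|_v$ is bounded by a constant depending only on $C$, $K$, and the $g_i$. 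The map $v\mapsto i(v)$ thus has domain of size at most $|S_L|<r$, so by pigeonhole there exists an orbit $O_{i_0}$ that does not appear as any $i(v)$. For this $i_0$, the value $|g_{i_0}(P)|_v$ is uniformly bounded at every place of $L$, and summing over $v$ gives an absolute height bound $h(g_{i_0}(P))\leq B$ with $B$ independent of $L$ and $P$.

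Finiteness now follows from the degree bound implicit in the hypothesis: since $S_L$ contains every archimedean place of $L$ and a number field of degree $n$ has at least $n/2$ archimedean places, we get $[L:\mathbb{Q}]\leq 2|S_L|\leq 2(r-1)$. Combined with $h(g_{i_0}(P))\leq B$, Northcott's theorem forces $g_{i_0}(P)$ into a finite, explicitly enumerable set of algebraic numbers, and each such value has at most $\deg g_{i_0}$ preimages on $\tilde{C}$, yielding the desired finite set of $P$. The construction of the $g_i$ and the bound $B$ is effective (via explicit Riemann--Roch on a plane curve); the main obstacle, and the place where extra input is required, is the effective passage from the height bound on $g_{i_0}(P)$ to an explicit enumeration of the points $P$ themselves. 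This is exactly where Baker's theorem on linear forms in logarithms (cited in the sequel as Theorem~\ref{tB}) enters, converting the height bound into an effective enumeration algorithm.
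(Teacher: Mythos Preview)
The paper does not prove Theorem~\ref{Run}; it is quoted as a known result of Runge and Bombieri (with references \cite{Run}, \cite{Bomb}, \cite{Lev}), so there is no proof in the paper to compare against. Your outline through the pigeonhole step and the degree bound $[L:\mathbb{Q}]\leq 2(r-1)$ is the standard argument and is essentially correct.

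Your final paragraph, however, contains a genuine error. You identify Theorem~\ref{tB} as ``Baker's theorem on linear forms in logarithms'' and claim it is what makes the enumeration effective. Neither is true. Theorem~\ref{tB} in this paper is a result of \emph{Matthew} Baker on canonical heights over \emph{function fields}; it has nothing to do with Alan Baker's transcendence theory and plays no role whatsoever in Runge's theorem. More to the point, Runge's method requires no input from linear forms in logarithms: once you have the explicit bound $h(g_{i_0}(P))\leq B$ together with $[L:\mathbb{Q}]\leq 2(r-1)$, the candidate values for $g_{i_0}(P)$ form a finite set of algebraic numbers of bounded height and degree that can be listed algorithmically, and for each candidate $\alpha$ the points $P$ with $g_{i_0}(P)=\alpha$ are found by solving polynomial equations on $\tilde{C}$. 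One repeats this for each of the $r$ possible indices $i_0$ and takes the union. That Runge's method is effective \emph{without} Thue--Siegel--Roth or Baker-type input is precisely its distinguishing feature relative to Siegel's theorem, so invoking such machinery here misrepresents the method.
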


Let $a,a_0,\ldots,a_{d-1},b_0,\ldots,b_{d-1}\in K[t]$ be polynomials as in Section \ref{sred}.  In view of Theorem \ref{Run}, we will be interested in the structure of $Y^{\pre}(N)$ at infinity (for simplicity, we again suppress the dependence on the polynomials $a,a_i,b_i$ from the notation).  Since $Y^{\pre}(N)$ is not necessarily geometrically irreducible, more precisely, we will need to study the structure of the points at infinity on the irreducible components of $Y^{\pre}(N)$.  Let $Y^{\pre}(N,1),\ldots,Y^{\pre}(N,r_N)$ denote the irreducible components of $Y^{\pre}(N)$ over $\Kbar$, where $r_N$ is the number of such irreducible components.  Recall that we have a rational map $\delta_N:Y^{\pre}(N)\to Y^{\pre}(N-1)$.  If $i_N\in \{1,\ldots, r_N\}$, then this map induces a (dominant) rational map 
\begin{equation*}
\delta_{N,i_N}:Y^{\pre}(N,i_N)\to Y^{\pre}(N-1,i_{N-1}) 
\end{equation*}
for some integer $i_{N-1}\in \{1,\ldots,r_{N-1}\}$.  Continuing, we obtain a chain of maps
\begin{equation*}
Y^{\pre}(N,i_N)\stackrel{\delta_{N,i_N}}{\rightarrow} Y^{\pre}(N-1,i_{N-1})\stackrel{\delta_{N-1,i_{N-1}}}{\rightarrow}\cdots\stackrel{\delta_{2,i_{2}}}{\rightarrow} Y^{\pre}(1,i_{1}).
\end{equation*}
Let us say that a sequence $i_1,i_2,i_3,\ldots$ is {\it admissible} if there are induced maps $\delta_{N,i_N}:Y^{\pre}(N,i_N)\to Y^{\pre}(N-1,i_{N-1})$ as above for every positive integer $N\geq 2$.

Let $X^{\pre}(N,i)$ denote the completion of the normalization $Y'^{\pre}(N,i)$ of $Y^{\pre}(N,i)$, $i=1,\ldots, r_N$.  Let $Y^{\pre}(N,i)(\infty)=X^{\pre}(N,i)(\Kbar)\setminus Y'^{\pre}(N,i)$ be the set of points ``at infinity" of $Y^{\pre}(N,i)$ and let
\begin{equation*}
Y^{\pre}(N)(\infty)=\bigsqcup_{i=1}^{r_N}Y^{\pre}(N,i)(\infty)
\end{equation*}
be the set of points at infinity of $Y^{\pre}(N)$.  We let $\deg_x Y^{\pre}(N,i)$ denote the degree in $x$ of the equation defining $Y^{\pre}(N,i)$ in $\mathbb{A}^2$.  To prove Statement A, we show that it suffices to control the structure of the points at infinity of $Y^{\pre}(N,i)$.

\begin{statementA'}
There exists a function $f:\mathbb{N}\to \mathbb{R}$ with $\lim_{n\to \infty}f(n)=\infty$ such that the following holds for any positive integer $N$ and any $i\in \{1,\ldots,r_N\}$:  Let $L$ be the minimal field of definition of $Y^{\pre}(N,i)$.  Then $Y^{\pre}(N,i)$ has at least $f(\deg_x Y^{\pre}(N,i))$ $\Gal(\overline{L}/L)$-orbits of points at infinity.
\end{statementA'}



We will show that Statement A' implies Statement A.  For this, we will also need a result of Baker \cite{Bak} on canonical heights over function fields.  If $F$ is a function field and $\phi\in F(x)$, we say that $\phi$ is {\it isotrivial} if it is conjugate by a linear fractional transformation over $\bar{F}$ to a function $\phi'$ defined over the constant field of $\bar{F}$.  We refer to \cite{Bak} for the relevant definitions.
\begin{theorem}[Baker]
\label{tB}
Let $F$ be a function field and let $\phi\in F(x)$ be a rational map of degree $d\geq 2$. Assume that $\phi$ is not isotrivial. Then there exists $\epsilon > 0$ (depending on $F$ and $\phi$) such that the set
\begin{equation*}
\{P\in \mathbb{P}^1(F)\mid \hat{h}_{\phi,F}(P)\leq \epsilon\}
\end{equation*}
is finite.
\end{theorem}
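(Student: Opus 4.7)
The plan is to prove Baker's theorem by expressing the function-field canonical height in Arakelov-theoretic terms and using non-isotriviality to force a positive self-intersection, from which a function-field analogue of Zhang's essential minimum inequality yields the desired $\epsilon>0$.

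After a finite base extension I would replace $F$ by $k(C)$ for a smooth projective curve $C$ over an algebraically closed constant field $k$, and extend $\phi$ to a rational map $\Phi:\mathbb{P}^1\times C\dashrightarrow \mathbb{P}^1\times C$ over $C$; each $P\in\mathbb{P}^1(F)$ is then a section $s_P:C\to\mathbb{P}^1\times C$. Following Tate, I would form the $\phi$-invariant adelic metric $\|\cdot\|_\phi=\lim_n d^{-n}(\phi^n)^*\|\cdot\|_{0}$ on $\mathcal{O}(1)$. At each place $v$ of $F$, this metric extends continuously to the Berkovich projective line $\mathbb{P}^{1,\mathrm{an}}_{F_v}$ and admits the equilibrium measure $\mu_{\phi,v}$ as its curvature current, while globally one has the identity $\hat{h}_{\phi,F}(P)=\widehat{\deg}\,s_P^{*}(\mathcal{O}(1),\|\cdot\|_\phi)$.

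Next I would show that non-isotriviality forces the arithmetic self-intersection $\widehat{c}_1(\mathcal{O}(1),\|\cdot\|_\phi)^2$ to be strictly positive. If this self-intersection vanished, the invariant metric $\|\cdot\|_\phi$ would be (up to scaling) the pullback of a single metric from the constant field, which combined with the uniqueness of the $\phi$-invariant metric would produce a linear fractional transformation over $\overline{F}$ conjugating $\phi$ into $\overline{k}(x)$, contradicting non-isotriviality. Granted positive self-intersection, the function-field analogue of Zhang's successive-minima inequality (as developed by Gubler and Chambert-Loir in this setting) gives a positive lower bound for the essential minimum of $\hat{h}_{\phi,F}$ on $\mathbb{P}^1(F)$, producing an $\epsilon>0$ such that $\{P:\hat{h}_{\phi,F}(P)\le\epsilon\}$ is finite.

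The main obstacle I anticipate is the middle step: converting the purely algebraic statement that $\phi$ is not isotrivial into the quantitative geometric statement that the arithmetic self-intersection is strictly positive. This requires a careful analysis of the interplay of the local equilibrium measures $\mu_{\phi,v}$ across all places $v$ of $F$ and of the conjugacy-class invariants of $\phi$ at each place, and is the technical heart of Baker's original argument.
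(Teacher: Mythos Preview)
The paper does not prove this theorem. Theorem~\ref{tB} is quoted as a result of Baker (with a citation to \cite{Bak}) and is used as a black box in the proof of Corollary~\ref{cB}; no argument for it appears in the paper. So there is no ``paper's own proof'' to compare your proposal against.

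As for the proposal itself: your outline is a reasonable high-level sketch of the Arakelov/Berkovich approach, and it is broadly in the spirit of Baker's original argument, which also passes through canonical measures on Berkovich $\mathbb{P}^1$ and an Arakelov-type intersection formalism. The step you flag as the main obstacle is indeed the crux: Baker's actual argument does not quite proceed by showing the adelic self-intersection is positive and then invoking a Zhang-type inequality as a single package. Rather, he shows that non-isotriviality forces the canonical measure $\mu_{\phi,v}$ at some place $v$ to \emph{not} be a point mass, and then uses properties of the Arakelov--Green function $g_{\mu_{\phi,v}}$ together with a local-global height decomposition to produce the gap. Your reformulation via $\widehat{c}_1(\mathcal{O}(1),\|\cdot\|_\phi)^2>0$ and a function-field Zhang inequality is morally equivalent, but you should be aware that the implication ``self-intersection zero $\Rightarrow$ isotrivial'' hides exactly the same analysis of the local measures that Baker carries out; it is not a shortcut. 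If you intend to write this up, you will still need to do that local work, and the appeal to Gubler/Chambert-Loir should be made precise (their results are stated in somewhat different generality and you would need to check the hypotheses carefully in this one-dimensional dynamical setting).
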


\begin{corollary}
\label{cB}
Let $\Delta$ be a positive integer.  Then there are only finitely many distinct curves $Y^{\pre}(N,i)$ with $\deg_x Y^{\pre}(N,i)\leq \Delta$.
\end{corollary}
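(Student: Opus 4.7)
The plan is to pass to the function field $F=K(t)$ and apply Baker's theorem (Theorem \ref{tB}) to $\phi=\phi_t\in F(x)$. First I would check that $\phi$ is non-isotrivial over $F$: were it isotrivial, there would exist a linear fractional $L\in\overline{F}(x)$ with $L\circ\phi\circ L^{-1}\in\overline{K}(x)$, so every specialization $\phi_{t_0}$ (for $t_0$ in the open set where $L$ is regular) would lie in the single $\overline{K}$-conjugacy class of $L\circ\phi\circ L^{-1}$. This would produce an infinite fiber of the moduli map $\langle\cdot\rangle|_{\mathcal{F}}\colon\mathcal{F}\to\M_d$, contradicting the simpleness of $\mathcal{F}$.

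Next I would convert the condition $\deg_x Y^{\pre}(N,i)\le\Delta$ into a condition on an algebraic preimage. The generic fibre of $Y^{\pre}(N,i)$ corresponds to a $\Gal(\overline{F}/F)$-orbit of an algebraic element $P_{N,i}\in\overline{F}$ satisfying $\phi^N(P_{N,i})=a$ in $\overline{F}$, with orbit size $[F(P_{N,i}):F]=\deg_x Y^{\pre}(N,i)\le\Delta$. Functoriality of the canonical height gives
\[
\hat{h}_\phi(P_{N,i})=d^{-N}\hat{h}_\phi(a)\le\hat{h}_\phi(a),
\]
a bound independent of $(N,i)$; and distinct components produce distinct Galois orbits, hence distinct $P_{N,i}$'s.

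Finally I would invoke the form of Baker's theorem valid for algebraic points of bounded degree: since $\phi$ is non-isotrivial, the set $\{P\in\mathbb{P}^1(\overline{F}):[F(P):F]\le\Delta,\ \hat{h}_\phi(P)\le\hat{h}_\phi(a)\}$ is finite, so only finitely many $P_{N,i}$ (equivalently, finitely many components) can occur. The main obstacle is precisely this last step: Theorem \ref{tB} as stated in the excerpt concerns $F$-rational points, whereas the argument requires it for algebraic points of degree at most $\Delta$ over $F$. This Northcott-type strengthening is a known consequence of the techniques of \cite{Bak} — non-isotriviality yields a uniform positive lower bound on canonical heights of non-preperiodic points of bounded degree, and the preperiodic locus of bounded degree is itself finite — and once it is in hand the remaining ingredients (functoriality of $\hat{h}_\phi$ and non-isotriviality from simpleness) are routine.
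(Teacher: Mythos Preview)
Your overall strategy is sound and reaches the same endpoint as the paper --- a contradiction with Baker's theorem --- and you have correctly isolated the one nontrivial gap: Theorem~\ref{tB} as stated concerns $F$-rational points, while your argument needs finiteness for points of bounded degree over $F=K(t)$. That strengthening is true, but the paper does \emph{not} invoke it; instead it takes a different route that requires only the $F$-rational version.

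The paper first observes that $\deg_x Y^{\pre}(N,i_N)$ factors as the product $\deg\pi_1\prod_{j=2}^N\deg\delta_{j,i_j}$ along any admissible chain. If infinitely many distinct components satisfied $\deg_x\le\Delta$, a K\"onig-type pigeonhole argument produces a single admissible sequence $(i_j)$ and an $N_0$ such that $\deg\delta_{j,i_j}=1$ for all $j\ge N_0$ and the curves $Y^{\pre}(j,i_j)$ are pairwise distinct. The paper then takes $F=\Kbar\bigl(Y^{\pre}(N_0,i_{N_0})\bigr)$, the function field of that fixed component. Over \emph{this} $F$ the degree-$1$ condition forces the successive preimages $\alpha_j$ with $\phi^j(\alpha_j)=a(t)$ to be genuinely $F$-rational, so Theorem~\ref{tB} applies directly and the infinitely many distinct $\alpha_j$ with $\hat h_{\phi,F}(\alpha_j)=d^{-j}\hat h_{\phi,F}(a)\to 0$ give the contradiction.

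In short, both arguments feed Baker's theorem the same sequence of small-height preimages; the difference is where the bounded-degree condition is absorbed. You absorb it by citing a bounded-degree Northcott property for $\hat h_\phi$ over $K(t)$, which is external to the paper. The paper absorbs it combinatorially, by enlarging the ground function field so that the relevant points become rational. Your route is conceptually more direct once the stronger Baker is granted; the paper's route is fully self-contained with the theorem as stated.
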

\begin{remark}
It may happen that a curve arises as $Y^{\pre}(N,i)$ for infinitely many $N$ and $i$.
\end{remark}
\begin{proof}
Let $N$ be a positive integer and let $i_N\in \{1,\ldots, r_N\}$.  Let 
\begin{equation*}
\pi_N:Y^{\pre}(N,i_N)\to \mathbb{P}^1
\end{equation*}
be the morphism obtained by projecting onto the $t$-coordinate.  We have 
\begin{equation*}
\deg_x Y^{\pre}(N,i_N)=\deg \pi_N.  
\end{equation*}
Let $i_j$, $j\in \mathbb{N}$, be an admissible sequence containing $i_N$ as the $N$th element.  Then $\pi_N$ can be decomposed into the sequence of maps
\begin{equation*}
Y^{\pre}(N,i_N)\stackrel{\delta_{N,i_N}}{\rightarrow} Y^{\pre}(N-1,i_{N-1})\stackrel{\delta_{N-1,i_{N-1}}}{\rightarrow}\cdots\stackrel{\delta_{2,i_{2}}}{\rightarrow} Y^{\pre}(1,i_{1})\stackrel{\pi_1}{\rightarrow}\mathbb{P}^1,
\end{equation*}
where $\pi_1$ is the projection onto the $t$-coordinate.  So 
\begin{equation*}
\deg_x Y^{\pre}(N,i_N)=\deg \pi_N=\deg\pi_1\prod_{j=2}^N \deg \delta_{j,i_j}.
\end{equation*}

It follows that if there were infinitely many distinct curves $Y^{\pre}(N,i)$ with $\deg_x Y^{\pre}(N,i)\leq \Delta$, then there must exist a positive integer $N_0$ and an admissible sequence of integers $i_j$ such that $\deg \delta_{j,i_j}=1$ for all $j\geq N_0$ and the curves $Y^{\pre}(j,i_j)$, $j\geq N_0$, are all distinct.  Let $F=\Kbar(Y^{\pre}(N_0,i_{N_0}))$, the function field of $Y^{\pre}(N_0,i_{N_0})$.  We view $\phi_t(x)\in F(x)$ and $a(t)\in \mathbb{P}^1(F)$ in the natural way.  Since $\mathcal{F}$ is assumed to be a simple family, it follows easily that $\phi_t$ is not isotrivial.  Let $\epsilon>0$ be as in Theorem \ref{tB}.


The fact that $\deg \delta_{j,i_j}=1$ for all $j\geq N_0$ means exactly that there exist elements $\alpha_j\in F$ such that $\phi^j(\alpha_j)=a(t)$, $j\geq N_0$.  Since the curves $Y^{\pre}(j,i_j)$, $j\geq N_0$, are all distinct, the elements $\alpha_j$, $j\geq N_0$, are all distinct.  By a basic property of the canonical height, we have $d^j\hat{h}_{\phi,F}(\alpha_j)=\hat{h}_{\phi,F}(a)$, $j\geq N_0$.  But this implies that there are infinitely many $\alpha_j\in F$ with $\hat{h}_{\phi,F}(\alpha_j)<\epsilon$, contradicting Theorem \ref{tB}.
\end{proof}

The finitely many curves in Corollary \ref{cB} can be effectively computed.  Indeed, given $\Delta$, by the corollary there exists a positive integer $M$ such that for $i=1,\ldots,r_M$, either $\deg_x Y^{\pre}(M,i)>\Delta$ or $Y^{\pre}(M,i)=Y^{\pre}(M',i')$ for some $M'<M$, $i'\in \{1,\ldots,r_{M'}\}$.  This easily implies that for all $N> M$, $i\in \{1,\ldots,r_{N}\}$, either $\deg_x Y^{\pre}(N,i)>\Delta$ or $Y^{\pre}(N,i)=Y^{\pre}(M',i')$ for some $M'<M$ and $i'\in \{1,\ldots,r_{M'}\}$.  Thus, to find every such curve with $\deg_x Y^{\pre}(N,i)\leq \Delta$, we simply compute $Y^{\pre}(N,i)$ for all $i$ and $N=1,2,\ldots$ until we encounter such an integer $M$.

\begin{theorem}
\label{keyt}
Statement A' implies Statement A (and hence Statement B).
\end{theorem}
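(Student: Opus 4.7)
Given a positive integer $s$, since $\lim_{n\to\infty}f(n)=\infty$ we may fix $\Delta=\Delta(s)\in\mathbb{N}$ with $f(n)>s$ for all $n\geq\Delta$. By Corollary~\ref{cB} and the effectivity comment following it, the collection
\[
\mathcal{C}(s)=\{C:C=Y^{\pre}(N,i)\text{ for some }N,i\text{ with }\deg_xY^{\pre}(N,i)<\Delta\}
\]
is a finite, effectively computable set of affine plane curves. We claim this $\mathcal{C}(s)$ witnesses Statement~A.

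Fix $N$ and a component $C=Y^{\pre}(N,i)\notin\mathcal{C}(s)$; by construction $\deg_xC\geq\Delta$, so Statement~A$'$ provides at least $f(\deg_xC)>s$ $\Gal(\overline{L_0}/L_0)$-orbits of points at infinity of $C$, where $L_0\supset K$ is the minimal field of definition of $C$. Let $C_1=C,C_2,\ldots,C_k$ (with $k=[L_0:K]$) be the $\Gal(\overline{K}/K)$-conjugates of $C$, and put $X=C_1\cup\cdots\cup C_k\subset\mathbb{A}^2$. Then $X$ is defined over $K$, and we will apply Theorem~\ref{Run} to $X$ rather than to $C$ over $L_0$: the alternative would, on passing from $(x_0,t_0)\in C(\co_{L,S_L})$ with $L\supset K$ to the corresponding $L\cdot L_0$-integral point, introduce an uncontrolled factor of $[L_0:K]$ relating $|S_{L\cdot L_0}|$ to $|S_L|$, and $[L_0:K]$ is not controlled by $\deg_xC$ alone.

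The central input is the following orbit-counting identity: the number of $\Gal(\overline{K}/K)$-orbits of points at infinity of $X$ equals the number of $\Gal(\overline{L_0}/L_0)$-orbits of points at infinity of $C$. Since normalization separates branches across distinct components, the smooth projective model of $X$ is the disjoint union $\bigsqcup_j X^{\pre}(N,i_j)$ of the smooth projective models of the conjugates $C_j=Y^{\pre}(N,i_j)$. Consequently, the sets $Y^{\pre}(N,i_j)(\infty)$ are genuinely disjoint in the model, and the $\Gal(\overline{K}/K)$-stabilizer of any point $P\in Y^{\pre}(N,i)(\infty)$ must preserve $C$ and hence lies inside $\Gal(\overline{K}/L_0)\cong\Gal(\overline{L_0}/L_0)$; an elementary orbit–stabilizer computation then shows that intersecting with $Y^{\pre}(N,i)(\infty)$ gives a bijection from $\Gal(\overline{K}/K)$-orbits on $\bigsqcup_jY^{\pre}(N,i_j)(\infty)$ to $\Gal(\overline{L_0}/L_0)$-orbits on $Y^{\pre}(N,i)(\infty)$. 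The common value therefore exceeds $s$, so Theorem~\ref{Run} applied to $X$ yields finiteness and effective determination of $\bigcup_{L\supset K,\,|S_L|<s}X(\co_{L,S_L})$, and a fortiori of $\bigcup_{L\supset K,\,|S_L|<s}C(\co_{L,S_L})$. Running over the finitely many components of $Y^{\pre}(N)$ outside $\mathcal{C}(s)$ yields Statement~A, since components in $\mathcal{C}(s)$ contribute nothing to $Y^{\pre}(N)\setminus\bigcup_{C\in\mathcal{C}(s)}C$.

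The principal obstacle is the orbit-counting identity. A priori, singularities at infinity on the projective closures $\tilde{C}_j$ could allow distinct conjugates to share points at infinity and so break the bijection. Our consistent use of the smooth projective model---exactly as encoded in the definition of $Y^{\pre}(N,i)(\infty)$---rules out such coincidences and reduces the identity to a routine group-theoretic computation. With it in hand, the rest is a mechanical assembly of Corollary~\ref{cB}, Statement~A$'$, and Theorem~\ref{Run}.
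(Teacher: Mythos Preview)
Your approach is genuinely different from the paper's and, in spirit, more elegant: the paper splits into two cases according to whether $[M:\mathbb{Q}]\geq 2s$ (where it uses the conjugate–intersection trick) or $[M:\mathbb{Q}]<2s$ (where it applies Runge over $M$ and pays a factor of $[M:\mathbb{Q}]<2s$ when passing from $L$ to $LM$, which is why the paper needs $f(n)\geq 2s^2$ rather than your $f(n)>s$). You instead pass to the $K$-curve $X=\bigcup_j\sigma_j(C)$ and use the orbit-counting identity to transfer the $\Gal(\overline{L_0}/L_0)$-orbit count on $C$ to a $\Gal(\overline{K}/K)$-orbit count on $X$. That identity is correct, and it is an attractive way to package the argument.

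There is, however, a genuine gap at the point where you invoke Theorem~\ref{Run} on the reducible curve $X$. Theorem~\ref{Run} as stated concerns a curve with its projective closure; you are counting orbits on the normalization $\bigsqcup_j X^{\pre}(N,i_j)$, which for a reducible curve can be strictly larger. More seriously, Runge's method with the normalization orbit count simply fails for general reducible curves: take $X=\{x=0\}\cup\{x=1\}\subset\mathbb{A}^2$ over $\mathbb{Q}$, which has two $\mathbb{Q}$-rational points at infinity on its normalization, yet $X(\mathbb{Z})$ is infinite. Your situation is special (the components are a single Galois orbit), and in that case the desired finiteness \emph{does} hold; but proving it amounts to exactly the dichotomy you were trying to avoid. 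If $L$ contains the field of definition of some component $C_j$, apply Runge to $C_j$ over that field (using that $C_j$ has $>s\geq|S_L|$ orbits at infinity); if $L$ contains no such field, then any $L$-point of $C_j$ is fixed by some $\tau$ not stabilizing $C_j$, hence lies in the finite set $C_j\cap\tau(C_j)$. So your single line ``Theorem~\ref{Run} applied to $X$'' conceals precisely the paper's two-case argument; as written, the citation does not carry the weight you place on it.
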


\begin{proof}

Let the function $f$ be as in Statement A' and let $s$ be a positive integer.  Let $\Delta$ be a positive integer such that $f(n)\geq 2s^2$ if $n\geq \Delta$.  Let 
\begin{equation*}
\mathcal{C}=\mathcal{C}(s)=\{Y^{\pre}(N,i)\mid N\geq 1, i\in \{1,\ldots, r_N\}, \deg_x Y^{\pre}(N,i)< \Delta\}.
\end{equation*}
By Corollary \ref{cB}, $\mathcal{C}$ is a finite set.  Let $Y^{\pre}(N,i)$ be a curve not in $\mathcal{C}$.  Then we have $\deg_x Y^{\pre}(N,i)\geq \Delta$.  Let $M$ be the minimal field of definition of $Y^{\pre}(N,i)$.  We consider two cases.

First, suppose that $[M:\mathbb{Q}]\geq 2s$.  Let $L\supset K$ be a number field and $S_L$ a finite set of places of $L$ containing the archimedean places satisfying $|S_L|<s$.  Then in particular, $[L:\mathbb{Q}]< 2s$ and $L\not\supset M$.  It follows that there is an element $\sigma\in \Gal(\Qbar/\mathbb{Q})$ that fixes $L$ but not $M$.  Then we must have 
\begin{equation*}
Y^{\pre}(N,i)(\co_{L,S_L})\subset (Y^{\pre}(N,i)\cap \sigma(Y^{\pre}(N,i)))(\overline{M}),
\end{equation*}
a finite set.  Moreover, there are only finitely many distinct such curves $\sigma(Y^{\pre}(N,i))$, so the set $Y^{\pre}(N,i)(\co_{L,S_L})$ is contained in a finite set independent of $L$.  It follows that in this case,
\begin{equation*}
\bigcup_{\substack{L\supset K,S_L\\|S_L|<s}}Y^{\pre}(N,i)(\co_{L,S_L})
\end{equation*}
is finite (and effectively computable).

Suppose now that $[M:\mathbb{Q}]<2s$.  By our assumptions, $f(\deg_xY^{\pre}(N,i))\geq 2s^2$ and $Y^{\pre}(N,i)$ has at least $2s^2$ $\Gal(\overline{M}/M)$-orbits of points at infinity.  By Theorem \ref{Run}, the set
\begin{equation*}
\bigcup_{\substack{M'\supset M,S_{M'}\\|S_{M'}|<2s^2}}Y^{\pre}(N,i)(\co_{M',S_{M'}})
\end{equation*}
is finite and effectively computable.  Since $[M:\mathbb{Q}]<2s$, if $L\supset K$ and $S_L$ is a finite set of places of $L$, then setting $M'=LM$ we have $|S_{M'}|< 2s|S_L|$, where $S_{M'}$ is the set of places of $M'$ lying above places of $S_L$.  So we find that the set
\begin{equation*}
\bigcup_{\substack{L\supset K,S_L\\|S_L|<s}}Y^{\pre}(N,i)(\co_{L,S_L})\subset \bigcup_{\substack{M'\supset M,S_{M'}\\|S_{M'}|<2s^2}}Y^{\pre}(N,i)(\co_{M',S_{M'}})
\end{equation*}
is finite (and effectively computable).  This proves Statement A with the set $\mathcal{C}$ as given above.
\end{proof}

It follows that to prove Theorem \ref{m2} we need to study in depth the structure of $Y^{\pre}(N)$ at infinity.

\subsection{The structure of $Y^{\pre}(N)$ at infinity}

In this section we prove Theorem \ref{m2} by proving appropriate versions of Statement A' in various cases (Theorems \ref{tr4}, \ref{tr1}, \ref{tr2}, \ref{tr3}).  Our main tool will be a lemma giving a sufficient criterion for a rational point to split into two rational points in a quadratic extension of function fields.  If $C$ is a curve over $K$, $P\in C(\Kbar)$, $\alpha\in K(C)^*$, and the principal divisor associated to $\alpha$ is $\dv(\alpha)=\sum_{Q\in C}n_QQ$, then we let $\ord_P \alpha=n_P$.

\begin{lemma}
\label{flemma}
Let $C$ be a nonsingular projective curve over a number field $K$ and let $P\in C(K)$.  Let $\alpha, \beta\in K(C)^*$ be two rational functions such that
\begin{equation*}
2\ord_P\beta<\ord_P\alpha.
\end{equation*}
Let $y\in \overline{K(C)}$ satisfy $y^2=\beta^2+\alpha$.  Then either 
\begin{enumerate}
\item $y\in K(C)$ and $K(C)(y)=K(C)$ \\\\
or\\
\item $[\Kbar(C)(y):\Kbar(C)]=2$ and if $\phi:X\to C$ is the morphism of nonsingular projective curves corresponding to the function field extension $K(C)(y)$ of $K(C)$, then $\phi^{-1}(P)$ consists of two $K$-rational points in $X(K)$.
\end{enumerate}
\end{lemma}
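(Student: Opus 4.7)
The plan is a local analysis at $P$ together with Hensel's lemma. Let $\pi\in K(C)$ be a uniformizer at $P$, so the completion of the local ring of $C$ at $P$ is $K[[\pi]]$, with fraction field $K((\pi))$. Let $m=\ord_P\beta$ and expand $\beta=b_m\pi^m+O(\pi^{m+1})$ with $b_m\in K^*$. By the hypothesis $2m<\ord_P\alpha$, the two terms in $\beta^2+\alpha$ have different orders at $P$, so $\ord_P(\beta^2+\alpha)=2m$; writing $\beta^2+\alpha=\pi^{2m}u_P$ one obtains $u_P\in K[[\pi]]^\times$ with $u_P(P)=b_m^2$, which is crucially a nonzero \emph{square} in $K$. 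Hensel's lemma applied to $z^2-u_P$ (whose reduction mod $\pi$ is $(z-b_m)(z+b_m)$, with distinct $K$-rational roots) produces $\tilde y_0\in K[[\pi]]$ with $\tilde y_0^2=u_P$, and setting $y_0=\pi^m\tilde y_0\in K((\pi))$ yields a $K$-rational local square root of $\beta^2+\alpha$.

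With this local square root in hand the dichotomy is straightforward. If $\beta^2+\alpha$ is a square in $K(C)$, take $y\in K(C)$ and conclusion (i) of the lemma holds. Otherwise $[K(C)(y):K(C)]=2$, and the factorization $y^2-(\beta^2+\alpha)=(y-y_0)(y+y_0)$ in $K((\pi))[y]$ gives the isomorphism
\begin{equation*}
K(C)(y)\otimes_{K(C)}K((\pi))\;\cong\;K((\pi))\times K((\pi)).
\end{equation*}
The right-hand side describes the completions at the places of $K(C)(y)$ lying over $P$, so there are exactly two such places, each with residue field $K$ and completion $K((\pi))$; these are the two $K$-rational points of $\phi^{-1}(P)$ required in conclusion (ii).

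It remains to verify that the extension stays nontrivial over $\Kbar(C)$. Suppose for contradiction that $\beta^2+\alpha=h^2$ with $h\in L(C)$ for some finite Galois extension $L/K$. Via the embedding $L(C)\hookrightarrow L((\pi))$ at the $K$-rational point $P$, one has $h^2=y_0^2$ in $L((\pi))$, forcing $h=\pm y_0\in K((\pi))$. Any $\sigma\in\Gal(L/K)$ acts trivially on $K((\pi))$ and hence fixes the image of $h$ there; on the other hand $\sigma(h)=\pm h$ in $L(C)$ since $\sigma(h)^2=h^2$, and injectivity of the Laurent expansion (together with $h\neq 0$) rules out $\sigma(h)=-h$. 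Hence $h\in K(C)$, contradicting our assumption. The main obstacle in the argument is precisely this descent step, and it works because the leading coefficient $b_m^2$ of $\beta^2+\alpha$ at $P$ is tautologically a square in $K$; this is exactly where the hypothesis $2\ord_P\beta<\ord_P\alpha$ pays off, by forcing the leading term of $\beta^2+\alpha$ to come from $\beta^2$ rather than from $\alpha$.
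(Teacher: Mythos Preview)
Your proof is correct and takes a genuinely different route from the paper's.

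The paper argues globally with explicit rational functions: after reducing to the case $\ord_P\beta<0$, it sets $F(x)=\sum_{i=0}^n\binom{1/2}{i}x^i$ (a truncated Taylor polynomial for $\sqrt{1+x}$) and forms $\psi_\pm=\beta F(\alpha/\beta^2)\pm y\in K(X)$. One then checks that $\psi_+\psi_-$ vanishes at every point above $P$ while $\psi_++\psi_-$ has a pole there, so each point above $P$ is a zero of exactly one of $\psi_\pm$ and a pole of the other; the involution $y\mapsto -y$ swaps zeros of $\psi_+$ with zeros of $\psi_-$, forcing two distinct points, and since $\psi_\pm\in K(X)$ these points are $K$-rational. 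The intermediate case $K(C)(y)=L(C)$ is then ruled out by evaluating $(y/\beta)^2=1+\alpha/\beta^2$ at $P$.

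Your approach replaces this explicit construction by passing to the completion $K((\pi))$: the hypothesis forces the leading coefficient of $\beta^2+\alpha$ to be the square $b_m^2$, Hensel gives a local square root $y_0\in K((\pi))$, and the resulting factorization yields $K(C)(y)\otimes_{K(C)}K((\pi))\cong K((\pi))\times K((\pi))$, which immediately exhibits two $K$-rational places over $P$. Your Galois descent for the intermediate case is the completion-theoretic analogue of the paper's evaluation argument. Your route is cleaner and avoids the truncated-series trick (and the preliminary reduction to $\ord_P\beta<0$); the paper's route, on the other hand, stays entirely inside the function field and produces concrete rational functions $\psi_\pm$ separating the two points, which can be useful if one later wants anything explicit.
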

\begin{proof}
If $K(C)(y)=K(C)$ then the theorem is trivial.  Suppose first that 
\begin{equation*}
[\Kbar(C)(y):\Kbar(C)]=2.  
\end{equation*}
Without loss of generality, we can assume that $\beta$ has a pole at $P$, i.e.,  $\ord_P\beta<0$ (multiply $\beta$ by a suitable function and $\alpha$ by the square of this function).  Since $2\ord_P\beta<\ord_P\alpha$, there exists a positive integer $n$ such that 
\begin{equation*}
2n\ord_P\beta<(n+1)\ord_P\alpha.  
\end{equation*}
Let $F(x)=\sum_{i=0}^n \binom{\frac{1}{2}}{i}x^n$, the power series for $\sqrt{1+x}$ around $x=0$ truncated to order $n$.  As is well-known, 
\begin{equation*}
(F(x))^2=1+x+O(x^{n+1}).  
\end{equation*}
Let $\psi_+, \psi_-\in K(X)=K(C)(y)$ be the rational functions 
\begin{align*}
\psi_+&=\beta F\left(\frac{\alpha}{\beta^2}\right)+y,\\
\psi_-&=\beta F\left(\frac{\alpha}{\beta^2}\right)-y.
\end{align*}
We have an identity
\begin{align*}
\psi_+\psi_-&=\beta^2\left(1+\frac{\alpha}{\beta^2}+O\left(\left(\frac{\alpha}{\beta^2}\right)^{n+1}\right)\right)-y^2\\
&=\beta^2O\left(\left(\frac{\alpha}{\beta^2}\right)^{n+1}\right).
\end{align*}
Since $2n\ord_P\beta<(n+1)\ord_P\alpha$, it follows that $\psi_+\psi_-$ has a zero at every point in $X$ lying above $P$.  However, $\psi_++\psi_-=2\beta F\left(\frac{\alpha}{\beta^2}\right)$ has a pole at every point in $X$ lying above $P$.  Let $\tau$ be the involution of $X$ corresponding to $y\mapsto -y$.  Note that $\tau$ gives a bijection between the zeros of $\psi_+$ and the zeros of $\psi_-$.  Let $Q\in X$ lie above $P$.  Then from the above, $Q$ must be a zero of exactly one of $\psi_+$ and $\psi_-$ and $Q$ must be a pole of exactly one of $\psi_+$ and $\psi_-$.  Thus, $\tau(Q)\neq Q$ and there are two points lying above $P$.  Moreover, since $\psi_+,\psi_-\in K(X)$, we must have that $Q$ and $\tau(Q)$ are $K$-rational.

Finally, suppose that $K(C)(y)=L(C)$ for some number field $L\supset K$.  It is easy to see that $y$ must have the form $y=\gamma\psi$, where $L=K(\gamma)$ and $\psi\in K(C)$.  We have $(\frac{y}{\beta})^2=1+\frac{\alpha}{\beta^2}$.  By our assumptions, $\frac{\alpha}{\beta^2}$ has a zero at $P$.  So 
\begin{equation*}
\left(\frac{y}{\beta}\right)^2(P)=\gamma^2\left(\frac{\psi}{\beta}(P)\right)^2=1.  
\end{equation*}
Since $\frac{\psi}{\beta}(P)\in K$, it follows immediately that $\gamma\in K$ and $L=K$.
\end{proof}



\subsubsection{The case $\phi_t=x^2+b(t)x+c(t)$}

The first part of Theorem \ref{m2} follows from Theorem \ref{keyt} and the following result.

\begin{theorem}
\label{tr4}
Let $K$ be a number field and let $a,b,c\in K[t]$ be polynomials with $b^2-4c-2b$ nonconstant.  There exists a number field $K'$ such that for any positive integer $N$ and any $i\in \{1,\ldots, r_N\}$, $Y^{\pre}(N,i)$ is defined over $K'$ and has at least $\frac{\deg_x Y^{\pre}(N,i)}{4(1+\deg (b^2-4c+4a))}$ $K'$-rational points at infinity.
\end{theorem}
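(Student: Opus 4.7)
The plan is to iteratively apply Lemma \ref{flemma} along the tower of quadratic extensions
\begin{equation*}
Y^{\pre}(N,i) \to Y^{\pre}(N-1,i_{N-1}) \to \cdots \to Y^{\pre}(1,i_1) \to \mathbb{A}^1_t,
\end{equation*}
producing at each step roughly twice as many $K'$-rational points at infinity. First I would conjugate $\phi_t$ by $x \mapsto x - b(t)/2$ (a $K$-rational linear fractional transformation) to reduce to the case $b = 0$, $\phi_t(x) = x^2 + c(t)$ with $c$ nonconstant, replacing the target by $a + b/2$; this preserves $\Delta = b^2 - 4c + 4a$ and its degree $D$, and induces $K$-isomorphisms of the curves $Y^{\pre}(N,i)$ together with their points at infinity. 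Setting $y_j := 2 x_j$ and $y_0 := 2a$, each nontrivial extension $Y^{\pre}(j,i_j) \to Y^{\pre}(j-1,i_{j-1})$ is of degree two, given by $y_j^2 = \alpha_j := 2 y_{j-1} - 4c$, and the relation $y_{j-1}^2 = 2 y_{j-2} - 4c$ yields the key identity
\begin{equation*}
\alpha_j = (y_{j-1}+1)^2 - (2 y_{j-2}+1) \qquad (j \geq 2),
\end{equation*}
which puts $\alpha_j$ in the form $\beta^2 + \gamma$ demanded by Lemma \ref{flemma}.

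The inductive step is as follows. Given a $K'$-rational point $P$ at infinity of $Y^{\pre}(j-1,i_{j-1})$, I would apply Lemma \ref{flemma} to the extension $F_j/F_{j-1}$ (with $y = y_j$) to conclude that $P$ lifts to two $K'$-rational points of $Y^{\pre}(j,i_j)$, provided I can exhibit $\beta \in K'(Y^{\pre}(j-1,i_{j-1}))$ with $2\ord_P \beta < \ord_P(\alpha_j - \beta^2)$. The naive choice $\beta = y_{j-1}+1$ from the identity gives only the borderline equality $2\ord_P \beta = \ord_P(\alpha_j - \beta^2)$ at points $P$ over $t = \infty$ in the ``$y$-dominated'' regime where $\ord_P y_{j-1} < \ord_P c$, because the recursion forces $\ord_P y_{j-2} = 2\ord_P y_{j-1}$. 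To break the equality I would refine $\beta$ to a polynomial in $t$ matching the leading expansion of $\sqrt{2 y_{j-1}}$ at $P$, so that $\beta^2$ cancels the dominant $2 y_{j-1}$ term of $\alpha_j$ and the error has strictly smaller pole. In the complementary ``$c$-dominated'' regime where $\ord_P c < \ord_P y_{j-1}$, $\alpha_j \sim -4c$ and an analogous polynomial $\beta$ matching $\sqrt{-4c}$ works, with the crucial feature that the leading coefficients there are determined by the fixed polynomial $c$, not by the cascading tower.

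The counting proceeds level by level. Starting from the at-most two $K'$-rational points of $Y^{\pre}(1,1)\colon y_1^2 = \Delta$, each subsequent lifting either doubles the count (the generic case) or, at a bounded number of transitional or ramified levels coming from the initial $y$-dominated phase and from the parity of $\deg\Delta$, contributes only one point; tracking pole orders shows that this phase lasts for at most $O(\log(1+D))$ levels and that the number of exceptional lifts over all $N$ iterations is bounded by a constant times $1+D$. Since $\deg_x Y^{\pre}(N,i) \leq 2^N$, the resulting count of at least $2^N/(4(1+D))$ $K'$-rational points at infinity of $Y^{\pre}(N,i)$ gives the theorem's bound.

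The main obstacle is constructing $K'$ as a fixed finite extension of $K$ \emph{independent of $N$}. In the $y$-dominated regime the leading coefficient $\lambda_j$ of $y_j$ at the relevant point satisfies $\lambda_j^2 = 2\lambda_{j-1}$, so adjoining the square roots needed for each refined $\beta$ at each level would naively force $[K':K]$ to grow with $N$. The resolution is that the $y$-dominated regime persists for only a bounded number (depending on $D$) of levels before pole orders of $y_{j-1}$ drop below that of $c$, at which point the $c$-dominated regime takes over with stable leading coefficient $-4\ell_c$ ($\ell_c$ the leading coefficient of $c$), so that only finitely many square roots need be adjoined; $K'$ can then be taken to be a fixed extension of $K$ of degree bounded solely in terms of $a, b, c$.
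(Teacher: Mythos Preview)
Your approach ultimately converges to the paper's, though by a circuitous route.  After your conjugation, the recursion $y_j^2 = 2y_{j-1}-4c$ is exactly the paper's \eqref{yeq2} (with $b^2-4c-2b$ replaced by $-4c$), and the paper's choice is simply $\beta=\sqrt{A}\,t^e$ where $-4c=(\sqrt{A}\,t^e)^2+f(t)$, $\deg f<2e$.  The paper never attempts to split points at the early levels: it fixes $m=\lfloor\log_2(1+\deg\Delta)\rfloor+1$, chooses $K'$ once and for all so that $\sqrt{A}\in K'$ and every point at infinity of $Y^{\pre}(m,j_m)$ is $K'$-rational, and then applies Lemma~\ref{flemma} with this fixed polynomial $\beta$ for all $n>m$.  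An easy induction on pole orders shows $2e\,\ord_P t<\ord_P y_{n-1}$ once $n>m$, so the hypothesis of Lemma~\ref{flemma} is met and every point at infinity doubles thereafter.  This is precisely what you describe in your final paragraph as ``the $c$-dominated regime takes over,'' and your observation that only finitely many square roots need be adjoined is exactly the paper's choice of $K'$.

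Your detour through $\beta=y_{j-1}+1$ and the proposed refinement in the $y$-dominated regime is where the write-up wobbles.  The suggestion to take ``a polynomial in $t$ matching the leading expansion of $\sqrt{2y_{j-1}}$'' does not work as stated, since for $j\geq 2$ the function $y_{j-1}$ is not a polynomial in $t$ and no polynomial in $t$ alone captures its leading behavior at $P$; you would need $\beta$ in the larger function field, and then the leading-coefficient recursion $\lambda_j^2=2\lambda_{j-1}$ you flag is a real obstruction to a direct construction.  The paper sidesteps this entirely by absorbing those first $m$ levels into $K'$, which you eventually do as well.  One point you gesture at but should make explicit: when $\deg c$ (equivalently $\deg(b^2-4c-2b)$) is odd, $-4c$ has no square-root leading term over $K'(t)$; the paper handles this by passing to $K'(Y^{\pre}(n,j_n))(\sqrt{t})$ and running the same argument there, which costs the extra factor of~$2$ in the denominator of the bound.
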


\begin{proof}
Let $j_1,j_2,\ldots$ be an admissible sequence.  Let 
\begin{equation*}
m=[\log_2 (1+\deg (b^2-4c+4a))]+1.  
\end{equation*}
Suppose first that $\deg (b^2-4c-2b)$ is even.  Let $K'$ be a number field such that
\begin{enumerate}
\item  $Y^{\pre}(m,j_m)$ is defined over $K'$.
\item  Every point at infinity of $Y^{\pre}(m,j_m)$ is defined over $K'$.
\item  $\sqrt{A}\in K'$, where $A$ is the leading coefficient of $b^2-4c-2b$.
\end{enumerate}

We claim that for $n> m$, $Y^{\pre}(n,j_n)$ is defined over $K'$ and that either 
\begin{equation*}
[K'(Y^{\pre}(n,j_n)):K'(Y^{\pre}(n-1,j_{n-1}))]=1, 
\end{equation*}
or over any point $P$ at infinity, $P\in Y^{\pre}(n-1,j_{n-1})(\infty)$, there are two points of $Y^{\pre}(n,j_n)(\infty)$, both defined over $K'$.  We have the function field identity $K'(Y^{\pre}(n,j_n))=K'(t,x_1,\ldots,x_n)$, where each $x_i$ satisfies 
\begin{equation*}
x_i^2+b(t)x_i+c(t)-x_{i-1}=0
\end{equation*}
and we have set $x_0=a(t)$.  Alternatively, solving the relevant quadratic equation in $x_i$, we easily see that $K'(Y^{\pre}(n,j_n))=K'(t,y_1,\ldots,y_n)$, where $y_i$ satisfies 
\begin{align}
\label{yeq1}
&y_1^2=b^2-4c+4a\\
&y_i^2=b^2-4c-2b+2y_{i-1},\qquad i\geq 2.
\label{yeq2}
\end{align}
In particular, $K'(Y^{\pre}(n,j_n))=K'(Y^{\pre}(n-1,j_{n-1}))(y_n)$.  We have 
\begin{equation*}
b^2-4c-2b=(\sqrt{A}t^e)^2+f(t) 
\end{equation*}
for some positive integer $e$ and some polynomial $f(t)$ satisfying $\deg f<2e$.  Let $P\in Y^{\pre}(n-1,j_{n-1})(\infty)$.  Using Lemma \ref{flemma} with $\beta=\sqrt{A}t^e$ and $\alpha=f(t)+2y_{n-1}$, by induction, to prove the claim it suffices to show that $2\ord_P\beta<\ord_P \alpha$ if $n>m$ (viewing all functions as functions on $Y^{\pre}(n-1,j_{n-1})$).  Since $\ord_P t<0$, 
\begin{equation*}
2\ord_P\beta=2e\ord_Pt<\ord_P f 
\end{equation*}
and it suffices to show that 
\begin{equation*}
2e\ord_P t<\ord_P y_{n-1}
\end{equation*}
if $n>m$.  If $m=1$ and $n=2$ then this is trivial (in this case, from the definition of $m$, $y_1$ is constant).  Otherwise, $n\geq 3$ and from \eqref{yeq2},
\begin{align*}
\ord_P y_{n-1}&\geq \frac{1}{2}\min\{\ord_P(b^2-4c-2b), \ord_P y_{n-2}\}\\
&\geq \frac{1}{2}\min\{2e\ord_P t, \ord_P y_{n-2}\}.
\end{align*}
Applying this repeatedly we obtain
\begin{equation*}
\ord_P y_{n-1} \geq \frac{1}{2}\min\{2e\ord_P t, \frac{1}{2^{n-3}}\ord_P y_{1}\}.
\end{equation*}
Let $C=\deg (b^2-4c+4a)$.  Then from \eqref{yeq1}, $\ord_P y_1=\frac{C}{2}\ord_P t$.  So
\begin{equation*}
\ord_P y_{n-1} \geq \min\left\{e, \frac{C}{2^{n-1}}\right\}\ord_P t.
\end{equation*}
Since $n>m$, $\frac{C}{2^{n-1}}\leq 1$, and so we find that $2e\ord_P t<\ord_P y_{n-1}$ as desired.

We now prove the theorem in this case.  Let $n>m$.  Then it follows from the above that either:
\begin{enumerate}
\item 
\begin{align*}
[K'(Y^{\pre}(n,j_n)):K'(Y^{\pre}(n-1,j_{n-1}))]=1,\\
\deg_x Y^{\pre}(n,j_n)=\deg_x Y^{\pre}(n-1,j_{n-1}), 
\end{align*}
and $Y^{\pre}(n,j_n)$ and $Y^{\pre}(n-1,j_{n-1})$ have the same number of points at infinity (all $K'$-rational).\\\\
or\\
\item 
\begin{align*}
[K'(Y^{\pre}(n,j_n)):K'(Y^{\pre}(n-1,j_{n-1}))]=2,\\ 
\deg_x Y^{\pre}(n,j_n)=2\deg_x Y^{\pre}(n-1,j_{n-1}), 
\end{align*}
and the curve $Y^{\pre}(n,j_n)$ has twice the number of points at infinity as $Y^{\pre}(n-1,j_{n-1})$ (all $K'$-rational).
\end{enumerate}
We note also that 
\begin{equation*}
\deg_x Y^{\pre}(m,j_m)\leq 2^m\leq 2(1+\deg (b^2-4c+4a)).  
\end{equation*}
The previous two statements then immediately imply that $Y^{\pre}(N,i)$ has at least $\frac{\deg_x Y^{\pre}(N,i)}{2(1+\deg (b^2-4c+4a))}$ $K'$-rational points at infinity.

The proof in the remaining case where $\deg (b^2-4c-2b)$ is odd is almost identical to the proof in the even case except that instead of considering the curves $Y^{\pre}(n,j_n)$, we consider appropriate affine curves $Y'^{\pre}(n,j_n)$ associated to the function field $K'(Y^{\pre}(n,j_n))(\sqrt{t})$ (we need the leading term of $b^2-4c-2b$ to be a perfect square in the function field).  With minor modifications, the proof in the even case then goes through for $Y'^{\pre}(n,j_n)$ (and if $Y'^{\pre}(n,j_n)$ has $r$ $K'$-rational points at infinity then $Y^{\pre}(n,j_n)$ has at least $\frac{r}{2}$ $K'$-rational points at infinity).
\end{proof}

\subsubsection{The case $\phi_t=\frac{x^2+b(t)x}{c(t)x+1}$}

We now prove versions of Statement A' in the case $\phi_t=\frac{x^2+b(t)x}{c(t)x+1}$, depending on the form of $a,b,c\in k[t]$.  Taken together, Theorems \ref{tr1}, \ref{tr2}, and \ref{tr3} complete the proof of Theorem \ref{m2}.

Let $N$ be a positive integer and let $i\in \{1,\ldots, r_N\}$.  The projection of $Y^{\pre}(N,i)$ onto the $t$-coordinate induces a morphism $\pi_{N,i}:X^{\pre}(N,i)\to \mathbb{P}^1$.  Assume, for the moment, that $Y^{\pre}(N,i)$ is defined over $K$.  From the definitions, the morphism $\pi_{N,i}$ corresponds to an extension of $K(t)$ given by $K(t,x_1,\ldots, x_N)$, where each $x_l$ satisfies 
\begin{equation*}
x_l^2+(b-x_{l-1}c)x_l-x_{l-1}=0, 
\end{equation*}
and where we have set $x_0=a$.  Each intermediate extension $K(t,x_1,\ldots, x_l)$, $1\leq l<N$, corresponds to a curve $X^{\pre}(l,j_l)$ for some $j_l\in \{1,\ldots, r_l\}$ ($j_1,\ldots,j_N=i$ forms part of an admissible sequence).  Note that $K(t,x_1,\ldots, x_l)=K(t,x_1,\ldots, x_{l-1})(y_l)$, where 
\begin{equation*}
y_l^2=(b-x_{l-1}c)^2+4x_{l-1}.
\end{equation*}

\begin{theorem}
\label{tr1}
Suppose that $b$ and  $c$ are nonconstant.  Let $N$ be a positive integer and $i\in \{1,\ldots, r_N\}$.  Then there exists a quadratic extension $K'$ of $K$ such that $Y^{\pre}(N,i)$ is defined over $K'$ and has at least $\frac{1}{2}\deg_x Y^{\pre}(N,i)$ $K'$-rational points at infinity (and $Y^{\pre}(N)$ has at least $2^{N-1}$ $K'$-rational points at infinity).  If 
\begin{equation*}
\deg a+\deg c\neq \deg b, 
\end{equation*}
then $Y^{\pre}(N,i)$ is defined over $K$ and has $\deg_x Y^{\pre}(N,i)$ $K$-rational points at infinity (and $Y^{\pre}(N)$ has $2^N$ $K$-rational points at infinity).
\end{theorem}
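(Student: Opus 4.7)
The plan is to follow the template of the proof of Theorem~\ref{tr4}, applying Lemma~\ref{flemma} inductively along the tower
\begin{equation*}
Y^{\pre}(1,j_1) \leftarrow Y^{\pre}(2,j_2) \leftarrow \cdots \leftarrow Y^{\pre}(N,i)
\end{equation*}
associated to an admissible sequence with $j_N=i$. Recall $K(Y^{\pre}(l,j_l))=K(Y^{\pre}(l-1,j_{l-1}))(y_l)$ with $y_l^2=(b-x_{l-1}c)^2+4x_{l-1}$ and $x_0=a$. Since the relation $x_l^2+(b-x_{l-1}c)x_l-x_{l-1}=0$ is monic in $x_l$, each $x_l$ is integral over $K[t]$, so every place at infinity of $Y^{\pre}(l,j_l)$ lies above $t=\infty$.

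The first step is to analyze $Y^{\pre}(1,j_1)$, whose defining equation $x^2+(b-ac)x-a=0$ has discriminant $(b-ac)^2+4a$. If $\deg a+\deg c\neq\deg b$, there is no cancellation in $b-ac$, so the leading term of $(b-ac)^2+4a$ equals the leading term of $(b-ac)^2$ and is automatically a perfect square in $K$; consequently $v_\infty$ splits in $K(t,y_1)/K(t)$ and $Y^{\pre}(1,j_1)$ has two $K$-rational points at infinity. If $\deg a+\deg c=\deg b$, cancellation can both spoil this square structure and produce a discriminant of odd degree; one then passes to a quadratic extension $K'/K$ containing the relevant leading-coefficient square root, and, when the degree is odd, further to the $\sqrt{t}$-cover exactly as in the odd-degree branch of the proof of Theorem~\ref{tr4}. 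In either subcase one obtains two $K'$-rational points at infinity on the appropriate cover, and the factor $\tfrac{1}{2}$ in the final count reflects the cost of the $\sqrt{t}$-cover.

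For the inductive step I would apply Lemma~\ref{flemma} with $\beta=b-x_{l-1}c$ and $\alpha=4x_{l-1}$ at each place $P$ of $Y^{\pre}(l-1,j_{l-1})$ at infinity, aiming to verify
\begin{equation*}
2\ord_P(b-x_{l-1}c) < \ord_P(4x_{l-1}).
\end{equation*}
Write $e_P=-\ord_P t>0$ and $m=\ord_P x_{l-1}$. Since $b$ is nonconstant, $\ord_P b=-(\deg b)e_P<0$; since $c$ is nonconstant, $\ord_P(x_{l-1}c)=m-(\deg c)e_P$ in the absence of cancellation. A short case analysis on the sign and size of $m$ relative to $(\deg b)e_P$ and $(\deg c)e_P$ shows that $\ord_P\beta$ is always negative enough to dominate $\tfrac{1}{2}\ord_P\alpha$. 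Lemma~\ref{flemma} then produces the expected dichotomy at $P$: either the extension $K'(Y^{\pre}(l,j_l))/K'(Y^{\pre}(l-1,j_{l-1}))$ is trivial, in which case $\deg_x$ and the number of points at infinity are both unchanged, or every point at infinity of $Y^{\pre}(l-1,j_{l-1})$ splits into two $K'$-rational points of $Y^{\pre}(l,j_l)$, in which case both quantities double. The ratio of $K'$-rational points at infinity to $\deg_x$ is therefore preserved, and combined with the base case this yields both bounds in the theorem.

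The main obstacle I anticipate is the subtle case in the inductive step where a pole of $x_{l-1}$ at $P$ has order that exactly cancels the leading behaviors of $b$ and $x_{l-1}c$ in $\beta$; one has to check that the residual order of $\beta$ is still negative enough to preserve $2\ord_P\beta<\ord_P\alpha$, and this is precisely where the simultaneous nonconstancy of $b$ and $c$ is essential. The complementary situations in which one of $b$ or $c$ is constant are handled by Theorems~\ref{tr2} and \ref{tr3}.
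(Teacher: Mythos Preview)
Your general framework via Lemma~\ref{flemma} is right, and the case $\deg a+\deg c\neq\deg b$ is essentially the paper's approach, though you have not isolated the crucial mechanism. What makes that case work is not a direct estimate on $\ord_P\beta$ but an \emph{invariant}: the paper shows, using the factorization
\begin{equation*}
x_{l+1}\bigl(x_{l+1}+b-x_lc\bigr)=x_l,
\end{equation*}
that the non-degeneracy condition $\ord_P x_l+\ord_P c\neq\ord_P b$ propagates from $l$ to $l+1$. That invariant is exactly what rules out cancellation in $b-x_lc$ and forces $2\ord_P(b-x_lc)<\ord_P x_l$. Your ``short case analysis on the sign and size of $m$'' does not establish this, because if $\ord_P x_{l-1}+\ord_P c=\ord_P b$ then $\ord_P(b-x_{l-1}c)$ is uncontrolled and the inequality can genuinely fail; the identity above is what lets you compute $\ord_Q x_{l+1}$ from $\ord_P x_l$ and verify that the bad equality cannot recur.

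Your treatment of the case $\deg a+\deg c=\deg b$ has a real gap. You import the $\sqrt{t}$-cover from Theorem~\ref{tr4} and attribute the factor $\tfrac12$ to it, but that device is irrelevant here: the problem is not an odd-degree leading term in $t$, it is possible cancellation between $b$ and $x_{l-1}c$ at some \emph{intermediate} level of the tower, which no base-field adjustment at level $1$ can prevent. The paper's argument is different: fixing a point $P$ at infinity of $Y^{\pre}(N,i)$ and its images $P_l$ below, one shows via the same factorization identity that the inequality $2\ord_{P_l}(b-x_lc)<\ord_{P_l}x_l$ can fail for \emph{at most one} index $l$ along the chain. Thus Lemma~\ref{flemma} applies at every step but possibly one; that single bad step is where the quadratic extension $K'$ may be needed and where one loses the factor of $2$. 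The case $a$ constant is handled separately (one may have to pass through a reducible $Y^{\pre}(1)$, which is the other source of a quadratic $K'$). So the $\tfrac12$ and $K'$ both arise from a one-time failure inside the tower, not from a $\sqrt{t}$-cover at the bottom.
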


\begin{proof}

Suppose first that $\deg a+\deg c\neq \deg b$.  In view of Lemma \ref{flemma} and the above, our result will follow if we show that for $1\leq l\leq N$, we have 
\begin{equation}
\label{eqb}
2\ord_P(b-x_{l-1}c)<\ord_Px_{l-1}
\end{equation}
for every point $P$ in $Y^{\pre}(l-1,j_{l-1})(\infty)$ (setting $Y^{\pre}(0)=\mathbb{A}^1$).

First, we claim that if $P\in Y^{\pre}(l-1,j_{l-1})(\infty)$ and $\ord_Px_{l-1}+\ord_Pc\neq \ord_Pb$ then $2\ord_P(b-x_{l-1}c)<\ord_Px_{l-1}$.  In this case, 
\begin{equation*}
\ord_P(b-x_{l-1}c)=\min\{\ord_P b, \ord_Px_{l-1}c\}.  
\end{equation*}
Since $\deg b, \deg c\neq 0$, we have $\ord_P b,\ord_P c<0$.  If $\ord_Px_{l-1}\geq 0$, the claim follows from the above since $\ord_P b<0$.  If $\ord_Px_{l-1}<0$, then 
\begin{equation*}
\ord_Px_{l-1}c<\ord_Px_{l-1}<0 
\end{equation*}
and the claim again follows.

We now show that \eqref{eqb} holds for $l\geq 1$ and any $P\in Y^{\pre}(l,j_{l})(\infty)$.  From the above, it suffices to show that for $l\geq 0$ and any $P\in Y^{\pre}(l,j_{l})(\infty)$ we have 
\begin{equation}
\label{ordeq}
\ord_Px_{l}+\ord_Pc\neq \ord_Pb.  
\end{equation}
The case $l=0$ is equivalent to $\deg b\neq \deg a+\deg c$.  Suppose now that for an $l\geq 0$ and any $P\in Y^{\pre}(l,j_{l})(\infty)$ that \eqref{ordeq} holds.  Let $Q\in Y^{\pre}(l+1,j_{l+1})(\infty)$, lying above $P\in Y^{\pre}(l,j_{l})(\infty)$.  We have the identity
\begin{equation}
\label{eqid}
x_{l+1}(x_{l+1}+b-x_{l}c)=x_{l}.
\end{equation}
By our assumptions and Lemma \ref{flemma}, we have $\ord_Q \psi=\ord_P\psi$ for any rational function $\psi$ on  $Y^{\pre}(l,j_{l})$ (viewing $\psi$ as a rational function on $Y^{\pre}(l+1,j_{l+1})$ on the left-hand side).  In particular, $\ord_Qx_{l}+\ord_Qc\neq \ord_Qb$ and so 
\begin{equation*}
\ord_Q (b-x_{l}c)=\min\{\ord_Qb,\ord_Q x_l+\ord_Qc\}<0, 
\end{equation*}
as $\ord_Qb,\ord_Qc<0$.  If $\ord_Qx_{l+1}<\ord_Q (b-x_{l}c)<0$, then \eqref{eqid} implies that $\ord_Qx_{l}=2\ord_Qx_{l+1}$.  But then 
\begin{equation*}
\ord_Qx_{l+1}>\ord_Q x_l>\ord_Q x_l+\ord_Qc\geq \ord_Q (b-x_{l}c), 
\end{equation*}
contradicting the assumption.  Thus, either 
\begin{equation*}
\ord_Q x_{l+1}=\ord_Q(b-x_{l}c), 
\end{equation*}
or 
\begin{equation*}
\ord_Q(x_{l+1}+b-x_{l}c)=\ord_Q(b-x_{l}c).  
\end{equation*}
In the latter case, by \eqref{eqid}, $\ord_Qx_{l+1}=\ord_Qx_l-\ord_Q(b-x_{l}c)$.  Combining everything, we find that either
\begin{equation*}
\ord_Qx_{l+1}=\min\{\ord_Qb,\ord_Q x_l+\ord_Qc\}
\end{equation*}
or
\begin{equation*}
\ord_Qx_{l+1}=\ord_Qx_l-\min\{\ord_Qb,\ord_Q x_l+\ord_Qc\}.
\end{equation*}
Since $\ord_Qb,\ord_Qc<0$, in the first case, 
\begin{equation*}
\ord_Qx_{l+1}\leq\ord_Qb<\ord_Qb-\ord_Qc, 
\end{equation*}
and in the second case, 
\begin{equation*}
\ord_Qx_{l+1}\geq \ord_Qx_l-(\ord_Q x_l+\ord_Qc)> \ord_Qb-\ord_Qc.  
\end{equation*}
Thus, $\ord_Qx_{l+1}+\ord_Qc\neq \ord_Qb$ and we are done by induction.

We now consider the case $\deg b=\deg a + \deg c$.  Suppose first that $a$ is constant.  If $b-ac$ is also constant, then $Y^{\pre}(1)$ splits into two curves $x=\alpha$ and $x=\beta$, each of which satisfies the conclusion of the theorem.  We are then reduced to the same problem with $a$ replaced by, say, $\alpha$.  If $\alpha\neq a$ then $b-\alpha c$ is nonconstant.  So we may reduce to considering the situation where $b-ac$ is nonconstant, at the expense of possibly replacing $K$ by a quadratic extension $K'=K(\alpha)$ of $K$.  If $b-ac$ is nonconstant, we have $2\ord_\infty(b-ac)<\ord_\infty a=0$.  From our earlier proof, we then obtain that for any point $P\in Y^{\pre}(1,j_1)(\infty)$, $\ord_P x_1=\pm \ord_\infty(b-ac)\neq 0$.  Since $\deg b=\deg c$, $\ord_Pb=\ord_Pc$ and so $\ord_P x_1+\ord_Pc\neq \ord_Pb$.  The same proof as above now shows that for $l\geq 0$ and any $P\in Y^{\pre}(l,j_l)(\infty)$, we have $2\ord_P(b-x_{l}c)<\ord_Px_{l}$, proving the theorem in this case.

Suppose now that $a$ is nonconstant.  Let $P=P_N\in Y^{\pre}(N,j_N)(\infty)$ and let $P_l$ be the image of $P_N$ in $Y^{\pre}(l,j_l)(\infty)$, $0\leq l<N$.  We claim that 
\begin{equation}
\label{eqc}
2\ord_{P_l}(b-x_{l}c)<\ord_{P_{l}}x_{l} 
\end{equation}
for all but at most one value of $l$, $0\leq l<N$.  Let $l_0$, $0\leq l_0<N$, be such that $2\ord_{P_{l_0}}(b-x_{l_0}c)\geq\ord_{P_{l_0}}x_{l_0}$.  Then by the contrapositive of what we have proved earlier, it follows that we must have 
\begin{equation*}
\ord_{P_l}x_l+\ord_{P_l}c=\ord_{P_l}b, \qquad 0\leq l\leq l_0.  
\end{equation*}
Working on $Y^{\pre}(N,j_N)$, this implies that 
\begin{equation*}
\ord_Px_l=\ord_Px_{l'}=\ord_Pb-\ord_Pc=\ord_Pa
\end{equation*}
for all $l,l'$, $0\leq l,l'\leq l_0$.  In particular, for any $l$, $0\leq l<l_0$, $\ord_Px_l=\ord_Px_{l+1}$.  Choose such an $l$.  Then \eqref{eqid} implies that 
\begin{equation*}
\ord_P (x_{l+1}+b-x_lc)=0.  
\end{equation*}
Since $\ord_P x_{l+1}=\ord_Pa< 0$, we must have 
\begin{equation*}
\ord_P x_l=\ord_P x_{l+1}=\ord_P (b-x_lc)<0.  
\end{equation*}
We then obtain that $2\ord_P(b-x_{l}c)<\ord_Px_{l}$, which implies that \eqref{eqc} holds for $l<l_0$.  It follows that there can exist at most one such value $l_0$.  This proves our claim that \eqref{eqc} holds for all but at most one value of $l$, $0\leq l<N$.  Combined with Lemma \ref{flemma}, this easily implies that there is a quadratic extension $K'$ of $K$ such that $Y^{\pre}(N,i)$ is defined over $K'$ and that $Y^{\pre}(N,i)$ has at least $\frac{1}{2}\deg_x Y^{\pre}(N,i)$ $K'$-rational points at infinity.  The statements about $Y^{\pre}(N)$ follow immediately.

\end{proof}

\begin{lemma}
\label{lpol}
The rational functions $x_l$ have poles only at points at infinity.
\end{lemma}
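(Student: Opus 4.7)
The plan is to argue by a straightforward induction on $l$, exploiting the defining quadratic relation
\begin{equation*}
x_l^2 + (b - x_{l-1}c)\, x_l - x_{l-1} = 0,
\end{equation*}
together with the fact that $a,b,c \in K[t]$ are polynomials. Let $P \in Y'^{\pre}(N,i)$ be a point not at infinity. By the very definition of $Y'^{\pre}(N,i)$ as the normalization of the affine curve $Y^{\pre}(N,i) \subset \mathbb{A}^2$ (with coordinates $x = x_N$ and $t$), both $t$ and $x_N$ take finite values at $P$, so in particular $\ord_P t \geq 0$. Since $a,b,c$ are polynomials in $t$, this immediately gives $\ord_P a(t), \ord_P b(t), \ord_P c(t) \geq 0$.

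I would now induct on $l$, showing $\ord_P x_l \geq 0$ for all $0 \leq l \leq N$. The base case $l=0$ is immediate since $x_0 = a(t)$ is a polynomial in $t$. For the inductive step, assume $\ord_P x_{l-1} \geq 0$ and suppose for contradiction that $\ord_P x_l < 0$. Then $\ord_P x_l^2 = 2\ord_P x_l$, while
\begin{equation*}
\ord_P\bigl((b - x_{l-1}c)\, x_l\bigr) \geq \min\{\ord_P b,\ \ord_P x_{l-1} + \ord_P c\} + \ord_P x_l \geq \ord_P x_l,
\end{equation*}
which is strictly greater than $2 \ord_P x_l$. Hence the left-hand side of the quadratic relation has order exactly $2\ord_P x_l < 0$ at $P$, whereas the right-hand side $x_{l-1}$ has non-negative order at $P$, a contradiction. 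Therefore $\ord_P x_l \geq 0$, completing the induction.

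The conclusion is that each $x_l$ is regular at every point of $Y'^{\pre}(N,i)$, so its polar set is contained in $Y^{\pre}(N,i)(\infty)$. There is really no obstacle here: the only thing worth double-checking is that the analysis applies to the normalization (even at points lying over singular points of $Y^{\pre}(N,i)$ in $\mathbb{A}^2$), but this is automatic since finiteness of $t$ and $x_N$ at the image point forces finiteness of every coefficient in the chain of quadratic relations, and the valuation argument is intrinsic to the local ring at $P$.
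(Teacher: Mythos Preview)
Your proof is correct and follows essentially the same approach as the paper: both argue by induction on $l$, using the quadratic relation and a valuation argument at a finite point to derive a contradiction from a hypothetical pole of $x_l$. The only cosmetic difference is that the paper phrases the key step via the factored form $x_{l+1}(x_{l+1}+b-x_lc)=x_l$, whereas you work directly with the unfactored quadratic; the underlying computation is identical.
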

\begin{proof}
This is obvious for $x_0=a(t)$.  Suppose the lemma is true for $x_l$.  Then $b-cx_l$ has poles only at points at infinity.  Now it follows immediately from \eqref{eqid} that if $x_{l+1}$ had a pole at a finite point then $x_l$ would have a pole at a finite point, contradicting our assumptions.  So $x_{l+1}$ must have poles only at points at infinity and the lemma follows by induction.
\end{proof}

\begin{theorem}
\label{tr2}
Suppose that $b$ is constant and $c$ is nonconstant.  Let $L$ be the minimal field of definition of $Y^{\pre}(N,i)$.  Then $Y^{\pre}(N,i)$ has at least $1+\log_2 \deg_x Y^{\pre}(N,i)$ $\Gal(\Lbar/L)$-orbits of points at infinity.
\end{theorem}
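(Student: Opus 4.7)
My plan is to induct along an admissible chain
\begin{equation*}
Y^{\pre}(N,i)=Y^{\pre}(N,j_N)\to Y^{\pre}(N-1,j_{N-1})\to\cdots\to Y^{\pre}(0)=\mathbb{A}^1_t,
\end{equation*}
tracking $r_l$, the number of $\Gal(\overline{L}/L)$-orbits of points of $X^{\pre}(l,j_l)$ lying above $t=\infty$. A preliminary induction on the recursion $x_l(x_l+b-x_{l-1}c)=x_{l-1}$, mimicking the proof of Lemma \ref{lpol}, shows that $x_l$ is moreover regular at every point above a finite value of $t$; so every point at infinity of $Y^{\pre}(l,j_l)$ lies above $t=\infty$, and it suffices to prove $r_N\geq 1+\log_2 d_N$ where $d_l=\deg_xY^{\pre}(l,j_l)$. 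The base case $l=0$ is immediate since $\mathbb{P}^1_t$ has a single $L$-rational point above $t=\infty$. A trivial step ($\deg\delta_{l,j_l}=1$) is a birational isomorphism that extends to an $L$-isomorphism of smooth projective models preserving $t$, so $r_l=r_{l-1}$ and $d_l=d_{l-1}$ leaves the bound unchanged. In the nontrivial case $d_l=2d_{l-1}$, the cover is given by $L(Y^{\pre}(l,j_l))=L(Y^{\pre}(l-1,j_{l-1}))(y_l)$ with $y_l^2=\beta^2+\alpha$, $\beta=b-x_{l-1}c$, $\alpha=4x_{l-1}$, and I want to apply Lemma \ref{flemma} at a point of $X^{\pre}(l-1,j_{l-1})$ above $t=\infty$.

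The key technical input is the following dichotomy, proved by induction on $l$ using the recursion: for every $P\in X^{\pre}(l,j_l)$ above $t=\infty$, writing $e=-\ord_Pt$,
\begin{equation*}
\ord_Px_l\in\{e\deg c\}\cup\mathbb{Z}_{\leq 0}.
\end{equation*}
Indeed, using $\ord_Pc=-e\deg c$ and the inductive upper bound $\ord_Px_{l-1}\leq e\deg c$ (which the dichotomy at level $l-1$ implies after pullback), a case analysis of the Newton polygon of $x_l(x_l+b-x_{l-1}c)=x_{l-1}$ forces $\ord_Px_l\in\{e\deg c,\ \ord_Px_{l-1}-e\deg c\}$, and the second value is $\leq 0$. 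Combined with $\sum_P\ord_Px_{l-1}=0$ and the regularity above finite $t$, this yields $\sum_{P\text{ above }t=\infty}\ord_Px_{l-1}\leq 0$. If every $P$ above $t=\infty$ had $\ord_Px_{l-1}=e(P)\deg c>0$, the sum would be strictly positive; hence some $P$ satisfies $\ord_Px_{l-1}\leq 0$. At such a $P$, a direct check using $\ord_Pc<0$ and $b$ constant gives $2\ord_P\beta<\ord_P\alpha$, so Lemma \ref{flemma} applied over the residue field $L(P)$ produces two $L(P)$-rational preimages of $P$ in $X^{\pre}(l,j_l)$. A short orbit-stabilizer argument (two $L(P)$-rational points above the same $P$ cannot be $\Gal(\overline{L}/L)$-conjugate, since any $\sigma$ fixing $P$ lies in $\Gal(\overline{L}/L(P))$ and hence fixes each $L(P)$-rational preimage) shows that the $\Gal(\overline{L}/L)$-orbit of $P$ splits into two distinct orbits above, so $r_l\geq r_{l-1}+1$. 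Iterating yields $r_N\geq 1+\log_2 d_N$.

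The principal obstacle is establishing the dichotomy: once it is in place, the rest is routine Newton-polygon and orbit bookkeeping. The most delicate sub-case is when $b\neq 0$ and $\ord_Px_{l-1}$ saturates the upper bound $e\deg c$, where one must rule out exotic cancellation in $\ord_P(b-x_{l-1}c)$ that would push the order of $\beta$ too close to half the order of $\alpha$.
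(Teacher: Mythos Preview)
Your approach is essentially the same as the paper's: induct along an admissible chain and, at each genuine degree-$2$ step, apply Lemma~\ref{flemma} at a well-chosen point above $t=\infty$ to force the Galois orbit to split. The argument is correct, but the ``dichotomy'' $\ord_Px_l\in\{e\deg c\}\cup\mathbb{Z}_{\leq 0}$ is an unnecessary detour (and your sketch does not fully handle the sub-case $w=\ord_Px_{l-1}=e\deg c$ with $b\neq 0$ and accidental cancellation in $b-x_{l-1}c$, where the Newton polygon can produce intermediate positive valuations). You never actually use the dichotomy: the existence of some $P$ above $t=\infty$ with $\ord_Px_{l-1}\leq 0$ follows immediately from $\deg(\mathrm{div}\,x_{l-1})=0$ together with Lemma~\ref{lpol}, once you note that in a degree-$2$ step $x_{l-1}$ is not identically zero (else $y_l=\pm b$ and the step is trivial). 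At such a $P$ your verification of $2\ord_P\beta<\ord_P\alpha$ is straightforward and correct.

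The paper's proof is slightly leaner still: rather than locating $P$ with $\ord_Px_{l-1}\leq 0$ via a divisor-degree argument, it simply takes $P$ to be a \emph{pole} of $x_l$, after checking (via \eqref{eqid} and Lemma~\ref{lpol}) that $x_l$ is nonconstant for all $l>0$ when $a\neq 0$; the case $a=0$ is then disposed of separately by observing that $Y^{\pre}(1)$ splits as $\{x=0\}\cup\{x=-b\}$ and one is reduced to the case $a=-b$. Your implicit handling of $a=0$ through ``trivial steps'' amounts to the same thing. In short: drop the dichotomy, keep the divisor-degree argument (or replace it by ``take a pole of $x_l$''), and the proof is complete.
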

\begin{proof}
We have $\deg_x Y^{\pre}(N,i)=[\Lbar(t,x_1,\ldots,x_N):\Lbar(t)]$.  Suppose first that $a$ is nonzero.  It is easily seen that $Y^{\pre}(1)$ is irreducible, $\deg_x Y^{\pre}(1)=2$, and $Y^{\pre}(1)$ has two $K$-rational points at infinity.  By induction, it suffices to show that if for some $l$, $1\leq l<N$, $[\Lbar(t,x_1,\ldots,x_{l+1}):\Lbar(t,x_1,\ldots,x_l)]=2$ then some $\Gal(\Lbar/L)$-orbit of $Y^{\pre}(l,j_l)(\infty)$ splits into two $\Gal(\Lbar/L)$-orbits in $Y^{\pre}(l+1,j_{l+1})(\infty)$.  We now show this.

It's easy to see from \eqref{eqid} and Lemma \ref{lpol} that under our assumptions $x_l$ can never be constant for any $l>0$.  Let $P\in Y^{\pre}(l,j_l)(\infty)$ be a pole of $x_l$.  Since $b$ is constant, we have $2\ord_P(b-x_{l}c)<\ord_Px_{l}$.  By Lemma \ref{flemma}, $P$ splits into two $L(P)$-rational points in $Y^{\pre}(l+1,j_{l+1})(\infty)$.  It follows that there are two distinct $\Gal(\Lbar/L)$-orbits in $Y^{\pre}(l+1,j_{l+1})(\infty)$ lying above the $\Gal(\Lbar/L)$-orbit containing $P$ in $Y^{\pre}(l,j_l)(\infty)$.

If $a=0$ then $Y^{\pre}(1)$ splits into two curves:  one defined by $x=0$ and one defined by $x=-b$.  For these curves the theorem is trivially true.  At the next step, we are reduced to considering the problem with the same $b$ and $c$ and with $a=0$ replaced by $a=-b$.  If $b=0$ then the only preimage curve is $x=0$ (and there's nothing more to prove).  Otherwise, if $b\neq 0$, the relevant case was already proved above.
\end{proof}

If $a$ and $c$ are both constant then it suffices to consider the case $b=t$.

\begin{theorem}
\label{tr3}
Suppose that $a$ and $c$ are constant and $b=t$.  Let $L$ be the minimal field of definition of $Y^{\pre}(N,i)$.  Then $Y^{\pre}(N,i)$ has at least $\log_2 \deg_x Y^{\pre}(N,i)$ $\Gal(\Lbar/L)$-orbits of points at infinity.
\end{theorem}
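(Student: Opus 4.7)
The plan is to adapt the proof of Theorem~\ref{tr2}. Working along an admissible sequence, we apply Lemma~\ref{flemma} at each step $l\to l+1$ of degree~$2$ in the tower
$K(Y^{\pre}(0))\subset K(Y^{\pre}(1,j_1))\subset\cdots\subset K(Y^{\pre}(N,i))$,
where $K(Y^{\pre}(l+1,j_{l+1}))=K(Y^{\pre}(l,j_l))(y_{l+1})$ with $y_{l+1}^2=\beta^2+\alpha$, $\beta=t-cx_l$ and $\alpha=4x_l$. A successful application at a $P\in Y^{\pre}(l,j_l)(\infty)$ satisfying $2\ord_P\beta<\ord_P\alpha$ splits the $\Gal(\Lbar/L)$-orbit of $P$ into two orbits of $Y^{\pre}(l+1,j_{l+1})(\infty)$, producing one additional orbit.

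The key observation is that the hypothesis of Lemma~\ref{flemma} is automatic at any $P\in Y^{\pre}(l,j_l)(\infty)$ with $\ord_Px_l>\ord_Pt=-e_P$: indeed, $\ord_P\beta=\ord_Pt=-e_P$ and the inequality $-2e_P<\ord_Px_l$ is then trivial. I would therefore prove by induction on $l$ the following Key Lemma: along any admissible sequence with $\deg_xY^{\pre}(l,j_l)\geq 2$, there exists $P\in Y^{\pre}(l,j_l)(\infty)$ with $\ord_Px_l>\ord_Pt$. The inductive step uses the identities $x_l^+x_l^-=-x_{l-1}$ and $x_l^++x_l^-=-(t-cx_{l-1})$ for the roots of the defining quadratic: at a $P'\in Y^{\pre}(l-1,j_{l-1})(\infty)$ where the Key Lemma holds, one gets $\ord_{P'}(t-cx_{l-1})=\ord_{P'}t$, and the ``small'' root has order $\ord_{P'}x_{l-1}+e_{P'}>0$ at its lift in $Y^{\pre}(l,j_l)$, preserving the property. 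The base case $a\neq 0$ is immediate from the Puiseux expansion $x_1\sim a/t$ at one of the two points at infinity of $Y^{\pre}(1)$ (which is irreducible when $a\neq 0$).

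The sole genuine obstruction is the degenerate case $a=0$, where an admissible sequence may pass through a component isomorphic to $\{x_{l_0}=-t\}$ on which $\ord x_{l_0}=\ord t$ and the Key Lemma fails. I would verify the bound directly here: at the next step $l_0\to l_0+1$, the hypothesis of Lemma~\ref{flemma} becomes $2\ord(t(1+c))<\ord(-t)$, i.e., $-2<-1$, when $c\neq -1$, and so Lemma~\ref{flemma} still applies; when $c=-1$ we have $\beta=t+x_{l_0}=0$ identically on that component, so $y_{l_0+1}^2=-4t$ and $Y^{\pre}(l_0+1,j_{l_0+1})$ is ramified over $Y^{\pre}(l_0,j_{l_0})$ with exactly one $\Gal(\Lbar/L)$-orbit at infinity and $\deg_x=2$, already matching the bound $\log_2 2=1$. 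Moreover, the ramification places a point $P$ of $Y^{\pre}(l_0+1,j_{l_0+1})(\infty)$ into the range $\ord_Px_{l_0+1}=-1>-2=\ord_Pt$, so the Key Lemma holds from level $l_0+1$ onward and the induction resumes. Summing over all degree-$2$ steps (with at most one exceptional step, accounting exactly for the ``missing $+1$'' compared to Theorem~\ref{tr2}), one obtains at least $\log_2\deg_xY^{\pre}(N,i)$ orbits at infinity. The main obstacle will be the careful treatment of the $a=0$, $c=-1$ sub-case and the book-keeping through possible trivial (degree-$1$) intermediate steps of the admissible sequence.
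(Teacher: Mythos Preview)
Your overall strategy---applying Lemma~\ref{flemma} at each degree-$2$ step of the tower to split one Galois orbit at infinity, and counting---is exactly the paper's. The difference lies in how you locate, at each level $l$, a point $P\in Y^{\pre}(l,j_l)(\infty)$ where the hypothesis of Lemma~\ref{flemma} holds.

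There is a genuine gap in your inductive step for the Key Lemma. Your argument tracks the ``small root'' branch: at a point $P'$ with $\ord_{P'}x_{l-1}>\ord_{P'}t$, one of the two lifts satisfies $\ord x_l>0$. This is fine when the step $l-1\to l$ has degree $2$. But when it has degree $1$, the quadratic for $x_l$ factors over $K(Y^{\pre}(l-1,j_{l-1}))$, and the specific component $Y^{\pre}(l,j_l)$ in your admissible sequence carries a \emph{single global root} as $x_l$; if that root is the ``big'' one at $P'$, then $\ord_{P'}x_l=\ord_{P'}t$ and your Key Lemma fails at $P'$. You flag this as ``book-keeping'', but it is not: nothing in your local tracking tells you the Key Lemma holds at some \emph{other} infinite point of the same component, and you need it to hold there before the next degree-$2$ step.

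The paper bypasses this by arguing globally. For $a\neq 0$ it proves, using only the identity $x_{l+1}(x_{l+1}+t-cx_l)=x_l$ and Lemma~\ref{lpol}, that on \emph{every} component $x_l$ is nonconstant with zeros only at infinity; hence some infinite $P$ has $\ord_Px_l>0$, which is your Key Lemma but proved independently of which branches were taken. For $a=0$ above $\{x_1=-t\}$, the paper uses an Eisenstein argument at $t=0$ to show every step has degree $2$ (so the problematic case never arises) and that $x_l$ has a simple zero at the unique point over $t=0$; the remaining zeros of $x_l$ are then forced to infinity. Your direct computations at infinity for the first one or two levels in the $a=0$ case (including the $c=-1$ ramification) are correct and agree with the paper's, but you still need one of these global inputs to push the induction through the rest of the tower.
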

\begin{proof}
Suppose first that $a\neq 0$.  We first show by induction that $x_l$ has zeros only at infinity on $Y^{\pre}(l,j_l)$.  This is trivially true for $x_0=a$.  Suppose it is true for $x_l$.  Since $x_{l+1}+t-cx_l$ has poles only at infinity, it follows immediately from the identity \eqref{eqid} that any finite zero of $x_{l+1}$ would be a zero of $x_l$.  We note also that for $l>0$ (and $a\neq 0$), it is easy to see that $x_l$ is nonconstant.  Furthermore, $Y^{\pre}(1)$ is irreducible and has two $K$-rational points at infinity.

For any zero $P$ of $x_l$ we have $\ord_P(b-x_{l}c)=\ord_Pt<0$ and $\ord_Px_{l}>0$.  It follows that $2\ord_P(b-x_{l}c)<\ord_Px_{l}$.  Now the same proof as in Theorem~\ref{tr2} shows that if $1\leq l<N$ and $[\Lbar(Y^{\pre}(l+1,j_{l+1})):\Lbar(Y^{\pre}(l,j_l))]=2$, then there are two distinct $\Gal(\Lbar/L)$-orbits in $Y^{\pre}(l+1,j_{l+1})(\infty)$ lying above the $\Gal(\Lbar/L)$-orbit containing $P$ in $Y^{\pre}(l,j_l)(\infty)$.  This is sufficient to imply the theorem in this case.  Alternatively, one could directly deduce the case $a\neq 0$ here in Theorem \ref{m2} from Theorem \ref{m1} or from Theorem \ref{tr2} using an identity as in Remark \ref{rem7}.

Suppose now that $a=0$.  In this case, $Y^{\pre}(1)$ consists of the two curves defined by $x=0$ and $x=-t$.  Both curves satisfy the conclusion of the theorem.  The curve $x=0$ leads to nothing essentially new and it suffices to consider the curves in $Y^{\pre}(N)$ lying above the curve $x=-t$.  So suppose that for $l\geq 2$, $Y^{\pre}(l,j_l)$ lies above the curve $x=-t$ via the natural maps.  We first claim that 
\begin{equation*}
[\Kbar(Y^{\pre}(l+1,j_{l+1})):\Kbar(Y^{\pre}(l,j_l))]=2 
\end{equation*}
for all $l\geq 1$ (in particular, $Y^{\pre}(l,j_l)$ is defined over $K$ for all $l$).  This follows essentially from Eisenstein's criterion.  More precisely, there is a unique point on $Y^{\pre}(l+1,j_{l+1})$ lying above $t=0$, it is a ramification point of the map 
\begin{equation*}
Y^{\pre}(l+1,j_{l+1})\to Y^{\pre}(l,j_l), 
\end{equation*}
and $x_{l+1}$ has a simple zero at this point.  For $Y^{\pre}(2,j_{2})$ and $x_2$ this is clear from the defining equation
\begin{equation*}
x_2^2+(1+c)tx_2+t=0.
\end{equation*}
The general case follows easily by induction from the equation
\begin{equation*}
x_{l+1}^2+(t-cx_l)x_{l+1}-x_l=0.
\end{equation*}

We now claim that $x_l$ has zeros only at infinity and above $t=0$.  This is true for $x_1=-t$.  The general case follows by induction as $x_{l+1}(x_{l+1}+t-cx_l)=x_l$ and $x_{l+1}+t-cx_l$ has poles only at infinity, so as mentioned previously, it is immediate that any finite zero of $x_{l+1}$ must be a zero of $x_l$.

If $\deg x_l>1$, then since $x_l$ has only a simple zero at the unique point above $t=0$, there must be some point $P\in Y^{\pre}(l,j_l)(\infty)$ with $\ord_P x_l>0$.  For this point we have $2\ord_P(b-x_{l}c)<\ord_Px_{l}$ and it follows that the $\Gal(\Kbar/K)$-orbit containing $P$ must split into two distinct $\Gal(\Kbar/K)$-orbits in $Y^{\pre}(l+1,j_{l+1})$.

If $\deg x_l=1$ and $Y^{\pre}(l,j_l)$ has more than one point at infinity, then there must be a point $P$ at infinity with $\ord_Px_l=0$.  Again we have $2\ord_P(b-x_{l}c)<\ord_Px_{l}$ and it follows that the $\Gal(\Kbar/K)$-orbit containing $P$ must split into two distinct $\Gal(\Kbar/K)$-orbits in $Y^{\pre}(l+1,j_{l+1})$.

A simple computation shows that $Y^{\pre}(2,j_2)$ has two $K$-rational points at infinity unless $c=-1$.  When $c=-1$, we find that $Y^{\pre}(2,j_2)$ has a single $K$-rational point at infinity but $Y^{\pre}(3,j_3)$ has two $K$-rational points at infinity.

Combining all of the above statements completes the proof in the case $a=0$.
\end{proof}

\subsection*{Acknowledgments}
The author would like to thank Xander Faber for many helpful remarks on an earlier draft of the paper.

\bibliography{dyn}
\end{document}